\newtheorem{theorem}{Theorem}%[subsection]
\newtheorem*{theorem*}{Theorem}%ADDED
\newtheorem*{proposition*}{Proposition}%ADDED
\newtheorem{proposition}{Proposition}%[subsection]
\newtheorem{lemma}{Lemma}%[subsection]
\newtheorem*{lemma*}{Lemma}%ADDED
\newtheorem{problem}{Problem}%[subsection]
\newtheorem{corollary}{Corollary}%[subsection]
\newtheorem{remark}{Remark}%[subsection]
\newtheorem{example}{Example}%[subsection]}
\def\hpic #1 #2 {\mbox{$\begin{array}[c]{l} \epsfig{file=#1,height=#2}
\end{array}$}}
\def\vpic #1 #2 {\mbox{$\begin{array}[c]{l} \epsfig{file=#1,width=#2}
\end{array}$}}
\newcommand  {\rmn}\romannumeral
 \newcommand{\IN}[0]{\mathbb{N}}
 \newcommand{\IZ}[0]{\mathbb{Z}}
 \newcommand{\CL}[0]{\mathcal{L}}
 \newcommand{\CT}[0]{\mathcal{T}}
\begin{document}
\title[On the  oriented Thompson subgroup $\vec{F}_3$ and its relatives in $F_k$]{On the  oriented Thompson subgroup $\vec{F}_3$ and its relatives in higher Brown-Thompson groups}
\author{Valeriano Aiello} 
\address{Valeriano Aiello,
Section de Math\'  ematiques, Universit\' e de Gen\` eve, 2-4 rue du Li\` evre, Case Postale 64,
1211 Gen\` eve 4, Switzerland
}\email{valerianoaiello@gmail.com}
\author{Tatiana Nagnibeda} 
\address{Tatiana Nagnibeda,
Section de Math\'  ematiques, Universit\' e de Gen\` eve, 2-4 rue du Li\` evre, Case Postale 64,
1211 Gen\` eve 4, Switzerland
}\email{tatiana.smirnova-nagnibeda@unige.ch}

\begin{abstract} A few years ago the so-called oriented subgroup $\vec F$ of the Thompson group $F$ was introduced by V.~Jones while investigating the connections between subfactors and conformal field theories. In the coding of links and knots by elements of $F$ it corresponds exactly to the oriented ones. Thanks to the work of Golan and Sapir, $\vec F$ provided the first example of a maximal  subgroup of infinite index in $F$ different from the parabolic subgroups that fix a point in $(0,1)$. In this paper we investigate possible analogues of $\vec F$ in higher Thompson groups $F_k, k\geq 2$, with $F=F_2$, introduced by Brown. Most notably, we study algebraic properties of the oriented subgroup $\vec{F}_3$ of $F_3$, as described recently by Jones, and prove in particular that it gives rise to a non-parabolic maximal subgroup of infinite index in $F_3$ and that the corresponding quasi-regular representation is irreducible.  
\end{abstract}
\maketitle

\section*{Introduction}
Ever since its introduction by Richard Thompson in 1965, the Thompson group $F$ has drawn a great deal of attention %, finding 
and striking connections have been found to a wide variety of seemingly different fields such as homotopy theory, %knot theory, 
 logic and cryptography, to name but a few.
Later in 1987 Kenneth Brown \cite{Brown} introduced a family of groups $F_k$, $k\geq 2$, generalizing the Thompson group $F$, with $F_2=F$. Among other things, he showed that these groups are finitely presented and of type FP$_\infty$.
All these groups are groups of piecewise-linear homeomorphisms of the unit interval $[0,1]$.

A few years ago Vaughan Jones \cite{Jo14} discovered a method to construct unitary representations of the Thompson group $F$ by means of planar algebras \cite{jo2}. This construction was later   extended to a broad class of groups including the Brown-Thompson groups thanks to a new categorical framework \cite{Jo16}. 
Several of these representations of the Thompson groups have been studied so far \cite{BJ, ABC, AJ, Jo19}.

In \cite{Jo14} Jones also introduced a procedure which yields unoriented links from elements of the Thompson group $F$. 
Recall that every element of $F$ can be described by a pair of planar rooted binary trees with the same number of leaves. 
Given an element $g=(T_+,T_-)$, % in these groups, 
 the associated knot/link is then denoted by $\CL(g)=\CL(T_+,T_-)$.
In general, the knots/links corresponding to the elements of %the Thompson group
$F$ %and $F_3$
 do not have a natural orientation.
To overcome this issue, Jones introduced %two subgroups,  
the so-called oriented subgroup $\vec{F} \leq F$. % and $\vec{F}_3\leq F_3$. 
A result analogous to the classical Alexander Theorems holds: given any unoriented (oriented) knot/link $L$ there exists an element $g$ in $F$ (in $\vec{F}$) %whose associated knot/link is equal to 
for which $\CL(g)=L$, \cite{Jo14, A}. % (see also \cite{A2} for a group theoretical interpretation of the number of components of $\CL(g)$).
It is worth mentioning that in \cite{ACJ} the subgroup $\vec{F}$ was realized as the group of fractions of the category of oriented forests $\vec{\mathfrak{F}}$ in the sense of  \cite{Jo16}, and
unitary representations associated with the Homflypt polynomial were introduced.
Other representations related to link and graph invariants   were discussed in \cite{AiCo1, AiCo2} with more elementary but less powerful methods.

Golan and Sapir undertook a systematic study of the subgroup $\vec{F}\leq F$  in \cite{GS, GS2}.
It should be mentioned that their work sits in a vast literature dedicated to the subgroups of $F$. In particular, Savchuk studied in \cite{Sav2, Sav} the stabilizers of  points in the unit interval along with their Schreier graphs. These  are maximal infinite index subgroups of $F$, and Savchuk asked whether they were the only such subgroups of $F$  \cite[Problem 2.5]{Sav}. A negative answer was provided by Golan and Sapir in \cite{GS2}, where Jones's oriented subgroup $\vec{F}$ was employed to exhibit a maximal subgroup of infinite index in $F$, whose elements do not have a common fixed point in $(0,1)$. 
The group $\vec{F}$ was also shown to be isomorphic to $F_3$ in \cite{GS} (see   \cite{Ren} for an alternative proof) and, % as a consequence
hence,
an alternative description of the oriented subgroup was given \cite{Jo14, GS} as the stabilizer of a certain subset of dyadic rationals.
% the unit interval $[0,1]$.
Thanks to this, it was proven that $\vec{F}$ coincides with its commensurator. By a classical result \cite[Section 3.4, Corollary 2]{Ma}, this implies that the quasi-regular representation associated with $\vec{F}$ is irreducible.

More recently, Jones \cite{Jo18} defined a new subgroup, this time of $F_3$: the so-called ternary oriented subgroup $\vec{F}_3$.
We briefly recall one motivation for introducing this subgroup.
As pointed out in \cite{Jo18}, the construction of knots and links 
from elements of $F$
of \cite{Jo14} can be   understood as follows. Firsty, we embed $F=F_2$ into $F_3$ by turning all the trivalent vertices of $(T_+,T_-)$
into
quadrivalent ones. Secondly, we join the two roots of the two trees and, thirdly, replace all the vertices by an appropriate crossing, to get a knot/link.
In \cite{Jo18} Jones extended this construction of knots/links to the whole Brown-Thompson group $F_3$ and, in order to describe  oriented knots, the group $\vec{F}_3$ was introduced.
It is 
therefore natural to perform an analysis of the ternary oriented subgroup, in the spirit of that done by Golan and Sapir for $\vec{F}=\vec{F}_2$.
This constitutes a substantial part of the present paper. As in the case of $\vec{F}=\vec{F}_2\leq F_2 = F$, the ternary oriented subgroup $\vec{F}_3$ gives rise to a non-parabolic maximal subgroup of infinite index in $F_3$. This  brings us to the question of finding maximal subgroups of infinite index that do not stabilise any point in $(0,1)$ in $F_k$ for all $k\geq 2$. 
For the moment there is no natural candidate for $\vec{F}_k$, with $k\geq 4$. Indeed, it is not known whether the Brown-Thompson groups $F_k, k\geq 4$ are good knot constructors, as is the case of $F_2$ and $F_3$. However, an algebraic description of $\vec F_2$ obtained by the machinery from 
 \cite{Ren} allows to define subgroups $H_k\leq F_k$ analogous to $\vec{F}_2$. It remains open whether these subgroups  give rise to maximal infinite index subgroups of $F_k$, as oriented $\vec{F}_2$ does in $F_2$.

We end this introduction by saying a few words on the structure of the paper and its main results.
In the first section we briefly recall some equivalent definitions and properties of the Thompson group $F$, of the Brown-Thompson groups $F_k$, $k\geq 2$, and of the oriented subgroups $\vec{F}_2$ and $\vec{F}_3$, while
the remaining sections are devoted to the main results of this paper. 
The first one is a description of  the ternary oriented subgroup as the stabilizer of a certain subset of the triadic rationals, see Section 2. 
As a corollary we get that $\vec{F}_3$ coincides with its commensurator, %. By a classical result, 
which again implies that the  quasi-regular representation of $F_3$ associated with $\vec{F}_3$ is %then
  irreducible.
The second main result is that the ternary oriented subgroup is finitely generated, see Section 3.
The third main result is contained in  Section 4, where we exhibit 
a maximal infinite index subgroup of $F_3$ isomorphic with $\vec{F}_3$ and show that it does not stabilise any point in $(0,1)$.
This is in contrast with another  
family of maximal infinite index subgroups, the so-called parabolic subgroups. We mention that,
likewise $\vec{F}\leq F$, the ternary oriented subgroup is precisely contained in three subgroups of $F_3$.
In the last section we introduce a family of subgroups $H_k\leq F_k$, for all $k\geq 2$, which generalise the oriented Thompson group $\vec{F}$, note some of their properties and formulate the question of their maximality for $k\geq 3$.

\section{Preliminaries and notation}
We start this section recalling the definition and  some properties of the Thompson group $F=F_2$ and of the Brown-Thompson groups $F_k, k\geq 2$. For further information, we refer to \cite{CFP, B} and \cite{Brown} 
\footnote{We point out that  Brown
uses a slightly different notation: %$F_2=F_{2,1}$ and $F_3=F_{3,1}$.
$F_k=F_{k,1}$.}.
The Thompson group $F=F_2$ can be described as the subgroup of the   orientation preserving
piecewise linear homeomorphisms of the unit interval $[0,1]$, which are %orientation preserving,  
differentiable everywhere but at finitely many dyadic rationals numbers and such that on the intervals of differentiability the derivatives are powers of $2$.  
The Thompson group has the following infinite presentation
$$
F_2=\langle x_0, x_1, \ldots \; | \; x_n x_l = x_l x_{n+1} \;\;  \forall \; l<n\rangle\; . 
$$
The elements of $F$ also admit a graphical representation, namely by  pairs of rooted planar binary trees $(T_+,T_-)$ with the same number of leaves.  As usual, we draw a pair of trees in the plane with one tree upside down on top of the other and join their   leaves as in Figures \ref{AAA} and \ref{genF}.
%For convenience, we shall consider the roots to be of the same valency as the other vertices but the leaves.
 % of the two trees.
 
We call such pairs tree diagrams.
Two pairs of trees are equivalent if they differ by a pair of opposing carets, see   Figure \ref{AAA}. Thanks to this equivalence relation, the following rule defines the multiplication in $F$: $(T_+,T)\cdot (T,T_-):=(T_+,T_-)$. The trivial element is represented by any pair $(T,T)$  and the inverse of $(T_+,T_-)$ is just $(T_-,T_+)$. We call \emph{splits} the vertices in the top tree and \emph{merges} those in the bottom tree.
See Figure \ref{genF} for a graphical description of the generators of $F_2$.
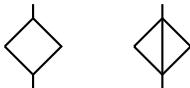
\begin{figure}
\caption{Pairs of opposing carets in $F=F_2$ and $F_3$.}\label{AAA}
%\\
\phantom{This text will be invisible} 
\[\begin{tikzpicture}[x=.75cm, y=.75cm,
    every edge/.style={
        draw,
      postaction={decorate,
                    decoration={markings}
                   }
        }
]

%SOPRA
\draw[thick] (0,0)--(.5,.5)--(1,0)--(.5,-.5)--(0,0);
\draw[thick] (0.5,0.75)--(.5,.5);%--(1,0)--(.5,-.5)--(0,0);
\draw[thick] (0.5,-0.75)--(.5,-.5);%--(1,0)--(.5,-.5)--(0,0);
\node at (0,-1.2) {$\;$};
\end{tikzpicture}
\qquad 
\begin{tikzpicture}[x=.75cm, y=.75cm,
    every edge/.style={
        draw,
      postaction={decorate,
                    decoration={markings}
                   }
        }
]

\draw[thick] (0.5,0.75)--(.5,-.75);%--(1,0)--(.5,-.5)--(0,0);
%SOPRA
\draw[thick] (0,0)--(.5,.5)--(1,0)--(.5,-.5)--(0,0);
\node at (0,-1.2) {$\;$};
\end{tikzpicture}
\]
%$\mbox{              }  $ \qquad \vpic {fig1aa.png} {.75in} \qquad $X=$\vpic {fig1bbbb} {1.25in} $\thicksim$ \vpic {fig1cc} {1.25in} \qquad \;
\end{figure}
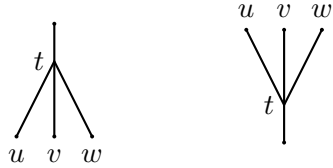
\begin{figure}
\caption{A split and a merge: $tu$ is a left edge, $tv$ a middle edge, $tw$ a right edge.}\label{fig-edge}
%\\
\phantom{This text will be invisible} 
 \[
\begin{tikzpicture}[x=1cm, y=1cm,
    every edge/.style={
        draw,
      postaction={decorate,
                    decoration={markings}
                   }
        }
]

\draw[thick] (0,0) -- (.5,1)--(1,0);
 \draw[thick] (.5,0)--(.5,1.5);

\fill (0,0)  circle[radius=.75pt];
\fill (.5,0)  circle[radius=.75pt];
\fill (1,0)  circle[radius=.75pt];
\fill (.5,1.5)  circle[radius=.75pt];

\node at (.3,1) {$\scalebox{1}{$t$}$};
\node at (0,-.25) {$\scalebox{1}{$u$}$};
\node at (.5,-.25) {$\scalebox{1}{$v$}$};
\node at (1,-.25) {$\scalebox{1}{$w$}$};

\end{tikzpicture}
\qquad\qquad
\begin{tikzpicture}[x=1cm, y=1cm,
    every edge/.style={
        draw,
      postaction={decorate,
                    decoration={markings}
                   }
        }
]

\draw[thick] (0,1.5) -- (.5,.5)--(1,1.5);
 \draw[thick] (.5,0)--(.5,1.5);

\fill (0,1.5)  circle[radius=.75pt];
\fill (.5,0)  circle[radius=.75pt];
\fill (1,1.5)  circle[radius=.75pt];
\fill (.5,1.5)  circle[radius=.75pt];

\node at (.3,.5) {$\scalebox{1}{$t$}$};
\node at (0,1.75) {$\scalebox{1}{$u$}$};
\node at (.5,1.75) {$\scalebox{1}{$v$}$};
\node at (1,1.75) {$\scalebox{1}{$w$}$};
\node at (1,-.25) {$\scalebox{1}{}$};

\end{tikzpicture}
\]
\end{figure}

\begin{figure}
\caption{The generators of $F=F_2$.}\label{genF}
%\\
\phantom{This text will be invisible} 
\[
\begin{tikzpicture}[x=.35cm, y=.35cm,
    every edge/.style={
        draw,
      postaction={decorate,
                    decoration={markings}
                   }
        }
]

\node at (-1.5,0) {$\scalebox{1}{$x_0=$}$};
\node at (-1.25,-3) {\;};

\draw[thick] (0,0) -- (2,2)--(4,0)--(2,-2)--(0,0);
 \draw[thick] (1,1) -- (2,0)--(3,-1);

 \draw[thick] (2,2)--(2,2.5);

 \draw[thick] (2,-2)--(2,-2.5);

\end{tikzpicture}
\;\;
\begin{tikzpicture}[x=.35cm, y=.35cm,
    every edge/.style={
        draw,
      postaction={decorate,
                    decoration={markings}
                   }
        }
]

\node at (-3.5,0) {$\scalebox{1}{$x_1=$}$};
\node at (-1.25,-3.25) {\;};

\draw[thick] (2,2)--(1,3)--(-2,0)--(1,-3)--(2,-2);

\draw[thick] (0,0) -- (2,2)--(4,0)--(2,-2)--(0,0);
 \draw[thick] (1,1) -- (2,0)--(3,-1);

 \draw[thick] (1,3)--(1,3.5);
 \draw[thick] (1,-3)--(1,-3.5);

\end{tikzpicture}
\;\;
\begin{tikzpicture}[x=.35cm, y=.35cm,
    every edge/.style={
        draw,
      postaction={decorate,
                    decoration={markings}
                   }
        }
]

\node at (-5.5,0) {$\scalebox{1}{$x_3=$}$};
\node at (6,0) {$\ldots$};

\draw[thick] (2,2)--(1,3)--(-2,0)--(1,-3)--(2,-2);
\draw[thick] (1,3)--(0,4)--(-4,0)--(0,-4)--(1,-3); %--(2,-2);

\draw[thick] (0,0) -- (2,2)--(4,0)--(2,-2)--(0,0);
 \draw[thick] (1,1) -- (2,0)--(3,-1);

 \draw[thick] (0,4)--(0,4.5);
 \draw[thick] (0,-4)--(0,-4.5);

\end{tikzpicture}
\]
%$\mbox{              }  $ \qquad \vpic {fig1aa.png} {.75in} \qquad $X=$\vpic {fig1bbbb} {1.25in} $\thicksim$ \vpic {fig1cc} {1.25in} \qquad \;
\end{figure}
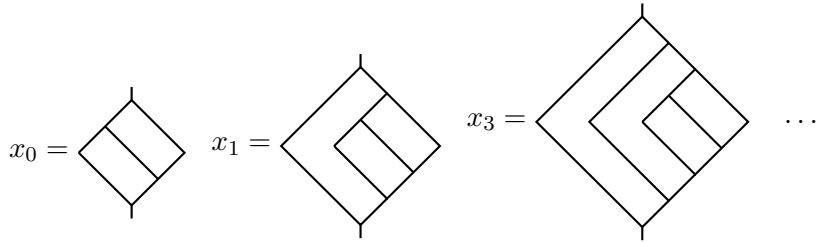
\begin{figure}
\caption{The generators of $F_3$.}\label{genF3}
%\\
\phantom{This text will be invisible} 
\[
\begin{tikzpicture}[x=.35cm, y=.35cm,
    every edge/.style={
        draw,
      postaction={decorate,
                    decoration={markings}
                   }
        }
]

\node at (-1.5,0) {$\scalebox{1}{$y_0=$}$};
\node at (-1.25,-3) {\;};

\draw[thick] (0,0) -- (2,2)--(4,0)--(2,-2)--(0,0);
\draw[thick] (1,1) -- (1,0)--(2,-2);
\draw[thick] (1,1) -- (2,0)--(3,-1);
\draw[thick] (2,2) -- (3,0)--(3,-1);

 \draw[thick] (2,2)--(2,2.5);
 \draw[thick] (2,-2)--(2,-2.5);

\end{tikzpicture}
\;\;
\begin{tikzpicture}[x=.35cm, y=.35cm,
    every edge/.style={
        draw,
      postaction={decorate,
                    decoration={markings}
                   }
        }
]

\node at (-1.5,0) {$\scalebox{1}{$y_1=$}$};
\node at (-1.25,-3.25) {\;};

%\draw[thick] (0,0) -- (2,2)--(4,0)--(2,-2)--(0,0);

\draw[thick] (0,0) -- (2,2)--(4,0)--(2,-2)--(0,0);
 \draw[thick] (1,0)--(2,-2);
\draw[thick] (3,0)--(2,1) -- (1,0); %--(3,-1);
\draw[thick] (2,0)--(3,-1);
\draw[thick] (2,2) -- (2,0);%--(3,-1);
\draw[thick] (3,0)--(3,-1);

%\draw[thick] (2,3) -- (-1,0)--(2,-3)--(5,0)--(2,3);
%\draw[thick] (2,3) -- (2,2);
%\draw[thick] (2,-3) -- (2,-2);

 \draw[thick] (2,2)--(2,2.5);
 \draw[thick] (2,-2)--(2,-2.5);
\end{tikzpicture}
\;\;
\begin{tikzpicture}[x=.35cm, y=.35cm,
    every edge/.style={
        draw,
      postaction={decorate,
                    decoration={markings}
                   }
        }
]

\node at (-3.5,0) {$\scalebox{1}{$y_2=$}$};
\node at (-1.25,-3.25) {\;};

\draw[thick] (2,2)--(1,3)--(-2,0)--(1,-3)--(2,-2);

\draw[thick] (0,0) -- (2,2)--(4,0)--(2,-2)--(0,0);
 \draw[thick] (1,1) -- (2,0)--(3,-1);

\draw[thick] (0,0) -- (2,2)--(4,0)--(2,-2)--(0,0);
\draw[thick] (1,1) -- (1,0)--(2,-2);
\draw[thick] (1,1) -- (2,0)--(3,-1);
\draw[thick] (2,2) -- (3,0)--(3,-1);

\draw[thick] (1,3) -- (-1,0)--(1,-3);

\node at (6,0) {$\ldots$};

 \draw[thick] (1,3)--(1,3.5);
 \draw[thick] (1,-3)--(1,-3.5);

\end{tikzpicture}
\]
%$\mbox{              }  $ \qquad \vpic {fig1aa.png} {.75in} \qquad $X=$\vpic {fig1bbbb} {1.25in} $\thicksim$ \vpic {fig1cc} {1.25in} \qquad \;
\end{figure}
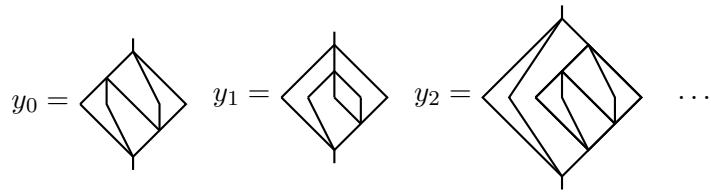

Similarly, for any $k\geq 2$, the Brown-Thompson group $F_k$ can be described as a group of   orientation preserving
piecewise linear homeomorphisms of the unit interval $[0,1]$, but in this case the points where the homeomorphisms may not be differentiable lie in $\IZ[1/k]$ and the slopes are powers of $k$. %For any $f, g\in F_3$, $x\in [0,1]$,  we adopt the notation $f\cdot g(x)\equiv g(f(x))$.
The Brown-Thompson group $F_k$ also admits an infinite presentation
$$
F_k=\langle y_0, y_1, \ldots \; | \; y_n y_l = y_l y_{n+k-1} \;\;  \forall \; l<n\rangle\; . 
$$
The Brown-Thompson group $F_k$ has the same description  
in terms of  trees as $F_2$, except that the trees are now $k$-ary: 
 the vertices are no longer $3$-valent, but $(k+1)$-valent. All the elements of $F_k$, $k\geq 2$, can be written in normal form,  as 
 $$
 y_{i_1}^{a_1}\cdots y_{i_n}^{a_n}y_{j_m}^{-b_m}\cdots y_{j_1}^{-b_1}
 $$ 
 with $i_1<\cdots <i_n\neq j_m>\cdots >j_1$. This representation is unique if one assumes that when both $y_i$ and $y_i^{-1}$ appear, then one among $y_j$ or $y_j^{-1}$, with $i<j<i+k$, appears as well.
 For a large part of this paper we will be mainly interested in $F_3$. In this case,
all the trees are subtrees of the tree of the standard triadic intervals, whose root is $[0,1]$ and its vertices are the standard triadic intervals (i.e., intervals of the form $[a3^{-l},(a+1)3^{-l}]$ for $a\in\IN$, $l\in\{0,1,\ldots, 3^l-1\}$), cf. \cite{B}.
See Figure \ref{genF3} for a graphical description of the generators of $F_3$. 

There is a natural embedding $\iota: F_2\to F_3$ obtained by replacing the $3$-valent vertices in a tree diagram of $F_2$ with $4$-valent vertices and joining the middle edges in the only possible planar way.
Under this embedding, we have $\iota(x_i)=y_{2i}$, see Figures
\ref{genF}
and 
\ref{genF3}. To ease the notation we will often omit the symbol of this map.

Let $(T_+,T_-)$ be any graphical representative of an element of $F_3$. 
We now briefly recall from \cite{Jo14, Jo18} how to obtain $\Gamma(T_+,T_-)$, which we call the {\bf %$\Gamma$-graph
  planar graph of $(T_+,T_-)$. }
We make a convention about how we draw tree diagrams. 
The leaves of the tree diagrams sit on the $x$-axis, precisely 
on the non-negative integers $\IN_0 :=\{0, 1, 2, \ldots \}$. 
The root  of the top tree is on the line $y=1$, while that of the % and the root of the 
bottom tree is on the line $y=-1$. % \footnote{The roots are the highest and lowest   $4$-valent vertices.}.
{\it Any tree diagram $(T_+,T_-)$ partitions the strip bounded by the lines $y=1$ and $y=-1$ in regions that we colour in black and white, 
with the convention that the left-most region is black. 
Now the vertices of the %$\Gamma$-graph
planar graph  sit on the $x$-axis, precisely 
on %the positive half-integers, that is 
$-1/2+2\IN_0 :=\{-1/2,1+1/2, 3+1/2, \ldots \}$ 
and there is precisely one vertex for  every black region. 
We draw an edge between two black regions whenever they meet at a $4$-valent vertex or at the root.}
For an element $(T_+,T_-)$ of $F_2$, we define the {\bf %$\Gamma$-graph
  planar graph of $(T_+,T_-)$} to be that of $\iota(T_+,T_-)\in F_3$.

Note that the %$\Gamma$-graph 
planar graph of $(T_+,T_-)$ is
essentially the Tait graph of $\CL(T_+,T_-)$ 
except that we do not specify the signs of its edges. 

We are %at last
now in a position to define the oriented subgroups% $\vec{F}$ 
\begin{align*}
\vec{F}_2&:=\{(T_+,T_-)\in F_2 \; | \; \Gamma(T_+,T_-) \textrm{ is $2$-colourable} \} \\ %{\rm Chr}(\Gamma(T_+,T_-),2)=2\}\; .
\vec{F}_3&:=\{(T_+,T_-)\in F_3\; | \; \Gamma(T_+,T_-) \textrm{ is $2$-colourable} \}
\end{align*}
where by being $2$-colourable we mean that it is possible to label the vertices of the graph with two colours such that whenever two vertices are connected by an edge, they have different colours.  
We denote by $+$ and $-$ the two colours.
Note that the definition does not depend on the specific representative $(T_+,T_-)$, \cite[%p. 19
Section 4.1]{Jo14}. 
Since $\Gamma(T_+,T_-)$ is connected, if it is $2$-colourable, there are exactly two colourings. By convention we choose the one in which the left-most vertex has colour $+$. 
We observe that if we cut the %$\Gamma$-graph
 planar graph of $(T_+,T_-)$ along the $x$-axis we get two subgraphs: one in the upper-half plane which we denote by $\Gamma_+\equiv \Gamma_+(T_+)$, one in the lower-half plane which we denote by $\Gamma_-\equiv\Gamma_-(T_-)$. Since the number of vertices of $\Gamma_\pm(T_\pm)$ is equal to the number of $4$-valent vertices of $T_\pm$ plus $1$, while the number of edges of $\Gamma_\pm(T_\pm)$ is equal to the number of $4$-valent vertices of $T_\pm$, it follows that the graphs  $\Gamma_+(T_+)$ and  $\Gamma_-(T_-)$ are trees. 
The trees $\Gamma_+$ and $\Gamma_-$ are always $2$-colourable, so $\Gamma(T_+,T_-)$ is $2$-colourable precisely when the colourings of  $\Gamma_+$ and $\Gamma_-$  are the same.
We observe that for %$\Gamma$-graphs
$\Gamma(T_+,T_-)$ being $2$-colourable is the same  as being bipartite.

\begin{remark}\label{rem1} 
It is worth mentioning that if we restrict the map $\iota$  to $\vec{F}_2$, its image is contained in $\vec{F}_3$. Therefore, the elements $y_{2i}y_{2i+2}=\iota(x_ix_{i+1})\in \vec{F}_3$ since $x_ix_{i+1}\in \vec{F}_2$, \cite{GS}.
As we will see in Example \ref{esey2}, the restriction of $\iota$ to $\vec{F}_2$ is not surjective to $\vec{F}_3$. 
\end{remark}
It is appropriate to say a word of warning: there exists another group denoted by $\vec{F}_3$ in the literature  \cite{GS}, which is a subgroup of $F_2$.

 \section{The ternary oriented Thompson group as a stabilizer subgroup}\label{sec-2}
We want to describe the ternary oriented Thompson group as the stabilizer (under the natural action) of a certain subset of triadic rationals. This result should be compared with  \cite[Theorem 2]{GS}, where $\vec{F}=\vec{F}_2$ is realized as the stabilizer of the set of dyadic fractions in the unit interval $[0,1]$ with odd sums of digits. 

Consider a tree in the upper half-plane and its leaves on the $x$-axis as usual.
To each vertex %(and leaf)
 $v$ of a tree we associate a natural number $c(v)$ which we call its {\bf weight}, as follows. 
{\it Given a vertex, there exists a unique minimal path from the root of the tree to the vertex. This path is made by a collection of left, middle, right edges, and may be represented by a word $w_1 1w_2 1\cdots 1 w_n$ in the letters
$\{0, 1, 2\}$ ($0$ stands for a left edge, $1$ for a middle edge, $2$ for a right edge), where $w_1, \ldots , w_{n-1}$ are words that do not contain the letter $1$, $w_n$ can have $1$ only as its last letter.
We call $\{w_{2k+1}\}_{k\geq 0}$ the odd words and $\{w_{2k}\}_{k\geq 0}$ the even words.
The weight of $v$ 
is the sum of the number of digits equal to $1$, plus the number of digits equal to $2$ in the odd words, plus the number of digits equal to $0$ in the even words.} 
When we compute the weight of a leaf in a tree diagram, 
sometimes we use the symbol $c_+$ or $c_-$ to distinguish which tree we are considering ($c_+$ for the top tree, $c_-$ for the reflected bottom tree).
{\it Similarly, we define the number $d(v)$ (and $d_\pm(v)$ if   we want to specify the tree) which counts the number of middle edges met in the path from the root to $v$.} % to the root.
Here follow a couple of easy lemmas that will come in handy in the sequel. The proofs are   similar and they can all be done by induction on the length of the path. 
We provide a proof only of the second one.
\begin{lemma}\label{lemma1}
If the following configurations occur in $T_+$
 \[
\begin{tikzpicture}[x=1cm, y=1cm,
    every edge/.style={
        draw,
      postaction={decorate,
                    decoration={markings}
                   }
        }
]

\draw[thick] (0,0) -- (.5,1)--(1,0);
 \draw[thick] (.5,0)--(.5,1.5);

\fill [color=black,opacity=0.2]   (0,0)--(0.5,1)--(.5,0); 
\fill [color=black,opacity=0.2]   (.5,1)--(.5,1.5)--(1,1.5)--(1,0)--(.5,1);

\fill (0,0)  circle[radius=.75pt];
\fill (.5,0)  circle[radius=.75pt];
\fill (1,0)  circle[radius=.75pt];
\fill (.5,1.5)  circle[radius=.75pt];

\node at (0,-.25) {$\scalebox{1}{$u$}$};
\node at (.5,-.25) {$\scalebox{1}{$v$}$};
\node at (1,-.25) {$\scalebox{1}{$w$}$};
\node at (.3,1) {$\scalebox{1}{$t$}$};

\end{tikzpicture}
\qquad 
\begin{tikzpicture}[x=1cm, y=1cm,
    every edge/.style={
        draw,
      postaction={decorate,
                    decoration={markings}
                   }
        }
]

\draw[thick] (0,0) -- (.5,1)--(1,0);
 \draw[thick] (.5,0)--(.5,1.5);

\fill [color=black,opacity=0.2]   (1,0)--(0.5,1)--(.5,0); 
\fill [color=black,opacity=0.2]   (.5,1)--(.5,1.5)--(0,1.5)--(0,0)--(.5,1);

\fill (0,0)  circle[radius=.75pt];
\fill (.5,0)  circle[radius=.75pt];
\fill (1,0)  circle[radius=.75pt];
\fill (.5,1.5)  circle[radius=.75pt];

\node at (0,-.25) {$\scalebox{1}{$u$}$};
\node at (.5,-.25) {$\scalebox{1}{$v$}$};
\node at (1,-.25) {$\scalebox{1}{$w$}$};
\node at (.3,1) {$\scalebox{1}{$t$}$};

\end{tikzpicture}
\]
then  in the first case $d_+(t), d_+(u), d_+(w)\in 2\IN_0+1$ and $d_+(v)\in 2\IN_0$, while in the second  
 $d_+(v)\in 2\IN_0+1$ and $d_+(t), d_+(u), d_+(w)\in 2\IN_0$. 
\end{lemma}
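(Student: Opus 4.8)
The plan is to isolate the parity of $d_+(t)$ and then read it off the black/white colouring of the regions around $t$. To begin with, the values $d_+(u),d_+(v),d_+(w)$ are governed by $d_+(t)$: the edge from $t$ to its left child $u$ and the edge from $t$ to its right child $w$ are not middle edges, so $d_+(u)=d_+(w)=d_+(t)$, whereas the edge from $t$ to its middle child $v$ is a middle edge, so $d_+(v)=d_+(t)+1$. Hence the first configuration will follow once we know $d_+(t)$ is odd, and the second once we know $d_+(t)$ is even; the statements about $u,v,w$ are then immediate.

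The key step is the claim, to be proved by induction on the length of the path from the root of $T_+$ to a vertex $v$: \emph{the region of $T_+$ immediately to the left of $v$ is black if and only if $d_+(v)$ is even.} (Here the regions of $T_+$ are coloured black and white with the left-most one black, so that adjacent regions carry opposite colours.) If $v$ is the root, the region to its left is the left-most one, hence black, and $d_+(v)=0$, so the base case holds. For the inductive step let $p$ be the parent of $v$. Around the $4$-valent vertex $p$ the four incident regions, read cyclically starting from the one on the left of $p$, are: the region left of $p$, the region between the left and middle child-edges of $p$, the region between the middle and right child-edges of $p$, and the region right of $p$. Since consecutive regions in this cyclic list have opposite colours, the region between the left and middle child-edges of $p$ carries the colour of the region right of $p$ (which is opposite to that of the region left of $p$), and the region between the middle and right child-edges of $p$ carries the colour of the region left of $p$. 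Now if $v$ is the left child of $p$, the region left of $v$ is the region left of $p$; if $v$ is the right child of $p$, it is the region between the middle and right child-edges of $p$; in both cases its colour equals that of the region left of $p$ and $d_+(v)=d_+(p)$, so the equivalence is inherited. If $v$ is the middle child of $p$, the region left of $v$ is the region between the left and middle child-edges of $p$, which has the opposite colour, and $d_+(v)=d_+(p)+1$ has the opposite parity, so again the equivalence is inherited.

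Finally I would match the two pictures to this dichotomy: the shaded regions are exactly the black ones, and the two configurations are precisely the two possible colourings of the regions surrounding the split vertex $t$. In the first picture the region to the left of $t$ is white -- the black regions adjacent to $t$ being the triangle between the left and middle child-edges of $t$ and the region between the edge rising from $t$ and the right child-edge -- while in the second it is black. By the claim the first forces $d_+(t)$ odd and the second forces $d_+(t)$ even, and together with the first paragraph this gives the asserted parities of $d_+(t),d_+(u),d_+(v),d_+(w)$.

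I expect the only delicate point to be organisational rather than mathematical: correctly identifying, for each of the three positions a child can occupy and for each of the two pictures, which region counts as ``the region to the left'' of the vertex in question, so as to line up the colour flips with the parity changes of $d_+$. Once that dictionary is fixed the argument is the one-line induction above, which is presumably why the paper writes out only the proof of the companion lemma.
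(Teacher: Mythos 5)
Your proof is correct: the invariant ``the region immediately to the left of a vertex of $T_+$ is black if and only if $d_+$ of that vertex is even'', established by induction on the length of the path from the root with the three cases left/middle/right child, combined with $d_+(u)=d_+(w)=d_+(t)$ and $d_+(v)=d_+(t)+1$, gives exactly the stated parities in the two shaded configurations. This is essentially the induction on path length that the paper indicates but omits for this lemma, merely organized around a single invariant rather than the enumeration of local shading configurations used in the paper's written proof of Lemma~\ref{lemma-sign}.
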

Recall from Section 1 that the vertices of tree diagrams and the planar graphs %$\Gamma$-graphs
$\Gamma(T_+,T_-)$
 can be identified with the non-negative integers and $-1/2+2\IN_0 :=\{-1/2,1+1/2, 3+1/2, \ldots \}$, respectively.
 \begin{lemma}\label{lemma-sign} 
Given an element $(T_+,T_-)\in F_3$, we have $c_\pm(2i)=c_\pm(2i-1)$ for all $i\geq 1$.
Moreover,  the colouring of the vertex $2i-1/2$ in $\Gamma_\pm$ 
 is $+$ if $c_\pm(2i)$ is even and it is $-$ if  $c_\pm(2i)$  is odd, $i\geq 0$. 
\end{lemma}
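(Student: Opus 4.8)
The plan is to establish the two assertions in sequence, in each case for $\Gamma_+$ (the statement for $\Gamma_-$ follows by reflecting the tree diagram across the $x$-axis).

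\emph{First assertion.} I would start from the deepest common ancestor $a$ of the leaves $2i-1$ and $2i$. These two leaves are, respectively, the right-most leaf of one child subtree of $a$ and the left-most leaf of the adjacent child subtree to its right, so there are two cases: either they straddle the boundary between the left and the middle child subtrees of $a$, or the boundary between the middle and the right ones. Since every ternary subtree has an odd number of leaves, a one-line induction on the length of the path to $a$ shows that the number of leaves lying strictly to the left of the subtree rooted at $a$ has the same parity as $d_+(a)$; comparing this with the fact that the left of the two leaves has odd index $2i-1$ forces $d_+(a)$ to be odd in the first case and even in the second. Then I would simply read off $c_+(2i-1)$ and $c_+(2i)$ from the paths out of $a$: the path from $a$ to leaf $2i-1$ is a single left or middle edge followed by a run of right edges, and the path from $a$ to leaf $2i$ is a single middle or right edge followed by a run of left edges. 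Plugging these into the definition of the weight (digits equal to $1$, plus digits equal to $2$ in odd blocks, plus digits equal to $0$ in even blocks) and using the parity of $d_+(a)$ just obtained, one checks in each case that the trailing monochromatic run falls entirely inside blocks whose letters do not count, while the first edge contributes exactly $1$; hence $c_+(2i-1)=c_+(2i)=c_+(a)+1$. This is a short finite case check, the only nuisance being the convention for the last block $w_n$ when $a$ is reached through a middle edge, which is dealt with by bookkeeping.

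\emph{Second assertion.} The vertex of $\Gamma_+$ placed at $2i-1/2$ is exactly the black region separating leaves $2i-1$ and $2i$ near the $x$-axis; reading it upwards, it is one of the two ``lower'' regions at the common ancestor $a$ above --- the region between the left and middle child edges of $a$ when $d_+(a)$ is odd, or between the middle and right child edges of $a$ when $d_+(a)$ is even --- in either case exactly the region that Lemma \ref{lemma1} declares black. As $a$ runs over the $4$-valent vertices of $T_+$, these lower black regions are precisely the vertices of $\Gamma_+$ other than the left-most one (at $-1/2$, coloured $+$ by convention), and the $\Gamma$-edge at $a$ joins the lower black region at $a$ to the ``upper'' black region at $a$ (the region to the left of $a$ when $d_+(a)$ is even, to the right of $a$ when $d_+(a)$ is odd), which lies one step closer to the left-most vertex. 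I would then prove, by induction on the length of the path to $a$, that the lower black region at $a$ has colour $+$ if $c_+(a)$ is odd and $-$ if $c_+(a)$ is even. The base case, $a$ the root, is immediate: $d_+(a)=0$, the upper region is the left-most one (colour $+$), so the lower one has colour $-$, matching $c_+(a)=0$. For the inductive step I would trace the upper black region at $a$ up through the edge joining $a$ to its parent $a'$: distinguishing whether $a$ is the left, middle or right child of $a'$ and applying Lemma \ref{lemma1} at $a'$, one identifies this region with either the upper or the lower black region at $a'$, and then the induction hypothesis together with the elementary rule for the change of weight along one edge (namely $+1$ on a middle edge, $+1$ on a right edge exactly when $d_+(a)$ is even, $+1$ on a left edge exactly when $d_+(a)$ is odd, and $+0$ otherwise) yields the asserted colour at $a$. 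Combining this with the first assertion, $c_+(2i)=c_+(a)+1$, gives precisely the claimed rule for the colour at $2i-1/2$; the case $i=0$ is the left-most vertex and is trivial.

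The step I expect to be the real obstacle is the inductive step of the second assertion: keeping straight which of the four regions around $a'$ the region ``to the left/right of $a$'' becomes, as a function of whether $a$ is a left, middle or right child of $a'$ and of the parity of $d_+(a')$, and then checking in each of the six resulting cases that the recursion for the colour is compatible with the recursion for $c_+$. Everything else reduces to routine induction on the length of a path, as for Lemma \ref{lemma1}.
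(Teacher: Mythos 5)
Your proposal is correct, and it reorganizes the paper's argument rather than reproducing it. The paper proves both claims at once by induction on the length of the path: one attaches a split below a left, middle or right edge, distinguishes the two possible local shadings (six pictures in all), and uses Lemma \ref{lemma1} to recompute the weights of the three new leaves, checking that their parities match the new portion of $\Gamma_\pm$. You instead split the work: (a) you obtain $c_+(2i-1)=c_+(2i)=c_+(a)+1$ directly, where $a$ is the deepest common ancestor of the two leaves, from the observation that the number of leaves to the left of the subtree at $a$ has the parity of $d_+(a)$ (only middle edges contribute an odd number) together with the rule that a middle edge always raises the weight by $1$, a left edge does so exactly when the number of middle edges above it is odd, and a right edge exactly when it is even; and (b) you propagate down the tree the invariant that the black region below an internal vertex $a$ (between the left and middle child edges when $d_+(a)$ is odd, between the middle and right ones when $d_+(a)$ is even, by Lemma \ref{lemma1}) is coloured $+$ exactly when $c_+(a)$ is odd, with the same kind of six-case bookkeeping (child type times parity of $d_+$); I checked these cases and they are consistent. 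Your route buys a clean intermediate statement at internal vertices, which makes the first assertion transparent and identifies the vertex $2i-1/2$ of $\Gamma_+$ intrinsically as the region below the common ancestor; the paper's route avoids the common-ancestor reduction and stays closer to the pictures already drawn for Lemma \ref{lemma1}, at the cost of carrying both claims through the induction simultaneously. The only step worth spelling out in your write-up is that, in the cases where the upper black region at $a$ is identified with the \emph{upper} black region at the parent $a'$, its colour is read off from the inductive hypothesis through the $\Gamma_+$-edge at $a'$ (adjacent vertices of the tree $\Gamma_+$ carry opposite colours), and these are precisely the cases where the weight is unchanged along the edge from $a'$ to $a$, so the recursion closes.
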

\begin{proof}
The proof is done by induction on the length of the path. The case of a path of length $1$ is obvious.
Suppose that the claim is true for all the paths of length $n$ and consider a path of length $n+1$. 
The $n$-th edge of this path  can either be a left, middle, or right edge. 
 Depending on this % choice of the edge (we can attach the new split to a left, middle, or right edge) 
 and the % colouring of the strip
 colours of the regions at distance at most $2$ from the last edge (there are two possible cases) we are led to consider
 six cases (depicted below). % to be dealt with. 
In the first two cases a split is attached below a right edge.
In the third and fourth cases a split is attached below a middle edge.  In the last two cases a split is attached below a left edge. Except for the leaves corresponding to the new split, the colourings (and the weight) of the other vertices are determined by induction. We write next to each vertex %and leaf
  its corresponding weight and see that %it matches
  its parity matches %with
   the signs in the $\Gamma_\pm$-graph (in all these cases we will repeatedly use Lemma \ref{lemma1} in order to determine how to compute the weight).
\[
\begin{tikzpicture}[x=.5cm, y=.5cm,
    every edge/.style={
        draw,
      postaction={decorate,
                    decoration={markings}
                   }
        }
]

\node at (-.75,1) {$\scalebox{.75}{1)}$};

\draw[thick] (1,0) -- (3,2)--(5,0);
 \draw[thick] (3,0) -- (3,2);
\draw[thick] (4,0) -- (4.5,.5)--(4.5,0);

\fill [color=black,opacity=0.2]   (.5,0)--(.5,2)--(3,2)--(1,0); 
 \fill [color=black,opacity=0.2]   (4.5,0)--(4.5,.5)--(5,0); 
\fill [color=black,opacity=0.2]   (3,0)--(3,2)--(4.5,0.5)--(4,0);

  \node at (4,-.25) {$\scalebox{.5}{$c$}$};
  \node at (.8,-.25) {$\scalebox{.5}{$c-1$}$};
 \node at (3,-.25) {$\scalebox{.5}{$c$}$};
 \node at (4.75,.75) {$\scalebox{.5}{$c$}$};
 \node at (5.5,-.25) {$\scalebox{.5}{$c+1$}$};
 \node at (4.6,-.25) {$\scalebox{.5}{$c+1$}$};

 \node at (5.5,-.25) {\phantom{$\scalebox{.5}{$c+1$}$}};

\end{tikzpicture}
\qquad
\begin{tikzpicture}[x=.4cm, y=.4cm,
    every edge/.style={
        draw,
      postaction={decorate,
                    decoration={markings}
                   }
        }
]

\draw[thick] (1,0) to[out=90,in=90] (2,0);
\draw[thick] (2,0) to[out=90,in=90] (3,0);

\fill (1,0)  circle[radius=1.5pt];
\fill (2,0)  circle[radius=1.5pt];
\fill (3,0)  circle[radius=1.5pt];
 
 \node at (1,-.5) {$\scalebox{.75}{$\pm$}$};
\node at (2,-.5) {$\scalebox{.75}{$\mp$}$};
\node at (3,-.5) {$\scalebox{.75}{$\pm$}$};

\end{tikzpicture}
\qquad
%\]
%\[
\begin{tikzpicture}[x=.5cm, y=.5cm,
    every edge/.style={
        draw,
      postaction={decorate,
                    decoration={markings}
                   }
        }
]

\node at (-.75,1) {$\scalebox{.75}{2)}$};

\draw[thick] (1,0) -- (3,2)--(5,0);
 \draw[thick] (3,0) -- (3,2);
\draw[thick] (4,0) -- (4.5,.5)--(4.5,0);

 \fill [color=black,opacity=0.2]   (4,0)--(4.5,.5)--(4.5,0)--(4,0); 
\fill [color=black,opacity=0.2]   (1,0)--(3,2)--(3,0)--(2.5,0)--(2,0); 
\fill [color=black,opacity=0.2]   (3,2)--(5.5,2)--(5.5,0)--(5,0);

  \node at (3.75,-.25) {$\scalebox{.5}{$c+1$}$};
  \node at (.8,-.25) {$\scalebox{.5}{$c-1$}$};
 \node at (2.8,-.25) {$\scalebox{.5}{$c-1$}$};
 \node at (4.75,.75) {$\scalebox{.5}{$c$}$};
 \node at (5.2,-.25) {$\scalebox{.5}{$c$}$};
 \node at (4.6,-.25) {$\scalebox{.5}{$c+1$}$};

\end{tikzpicture}
\qquad
\begin{tikzpicture}[x=.4cm, y=.4cm,
    every edge/.style={
        draw,
      postaction={decorate,
                    decoration={markings}
                   }
        }
]

\draw[thick] (1,0) to[out=90,in=90] (3,0);
\draw[thick] (2,0) to[out=90,in=90] (3,0);

\fill (1,0)  circle[radius=1.5pt];
\fill (2,0)  circle[radius=1.5pt];
\fill (3,0)  circle[radius=1.5pt];
 
 \node at (1,-.5) {$\scalebox{.75}{$\pm$}$};
\node at (2,-.5) {$\scalebox{.75}{$\pm$}$};
\node at (3,-.5) {$\scalebox{.75}{$\mp$}$};

\end{tikzpicture}
\]
\[
\begin{tikzpicture}[x=.5cm, y=.5cm,
    every edge/.style={
        draw,
      postaction={decorate,
                    decoration={markings}
                   }
        }
]

\node at (-.75,1) {$\scalebox{.75}{3)}$};

\draw[thick] (1,0) -- (3,2)--(5,0);
 \draw[thick] (3,0) -- (3,2);
\draw[thick] (2.5,0) -- (3,.5)--(3.5,0);

\fill [color=black,opacity=0.2]   (.5,0)--(.5,2)--(3,2)--(1,0); 
\fill [color=black,opacity=0.2]   (2.5,0)--(3,.5)--(3,0); 
\fill [color=black,opacity=0.2]   (3,2)--(5,0)--(3.5,0)--(3,0.5)--(3,2);

  \node at (3.5,.75) {$\scalebox{.5}{$c$}$};
 \node at (2.2,-.25) {$\scalebox{.5}{$c+1$}$};
 \node at (.8,-.25) {$\scalebox{.5}{$c-1$}$};
 \node at (3.05,-.25) {$\scalebox{.5}{$c+1$}$};
 \node at (3.6,-.25) {$\scalebox{.5}{$c$}$};
 \node at (5,-.25) {$\scalebox{.5}{$c$}$};

\end{tikzpicture}
\qquad
\begin{tikzpicture}[x=.4cm, y=.4cm,
    every edge/.style={
        draw,
      postaction={decorate,
                    decoration={markings}
                   }
        }
]

\draw[thick] (1,0) to[out=90,in=90] (3,0);
\draw[thick] (2,0) to[out=90,in=90] (3,0);

\fill (1,0)  circle[radius=1.5pt];
\fill (2,0)  circle[radius=1.5pt];
\fill (3,0)  circle[radius=1.5pt];
 
 \node at (1,-.5) {$\scalebox{.75}{$\pm$}$};
\node at (2,-.5) {$\scalebox{.75}{$\pm$}$};
\node at (3,-.5) {$\scalebox{.75}{$\mp$}$};

\end{tikzpicture}
\qquad
%\]
%\[
\begin{tikzpicture}[x=.5cm, y=.5cm,
    every edge/.style={
        draw,
      postaction={decorate,
                    decoration={markings}
                   }
        }
]

\node at (-.75,1) {$\scalebox{.75}{4)}$};

\draw[thick] (1,0) -- (3,2)--(5,0); 
\draw[thick] (3,0) -- (3,2);
\draw[thick] (2.5,0) -- (3,.5)--(3.5,0);

 \fill [color=black,opacity=0.2]   (3.5,0)--(3,.5)--(3,0)--(3.5,0); 
\fill [color=black,opacity=0.2]   (1,0)--(1.5,.5)--(3,2)--(3,.5)--(2.5,0)--(2,0); 
\fill [color=black,opacity=0.2]   (3,2)--(5.5,2)--(5.5,0)--(5,0);

  \node at (3.5,.75) {$\scalebox{.5}{$c-1$}$};
 \node at (2.3,-.25) {$\scalebox{.5}{$c-1$}$};
 \node at (.8,-.25) {$\scalebox{.5}{$c-1$}$};
 \node at (3,-.25) {$\scalebox{.5}{$c$}$};
 \node at (3.5,-.25) {$\scalebox{.5}{$c$}$};
 \node at (5,-.25) {$\scalebox{.5}{$c$}$};

\end{tikzpicture}
\qquad
\begin{tikzpicture}[x=.4cm, y=.4cm,
    every edge/.style={
        draw,
      postaction={decorate,
                    decoration={markings}
                   }
        }
]

\draw[thick] (1,0) to[out=90,in=90] (3,0);
\draw[thick] (1,0) to[out=90,in=90] (2,0);

\fill (1,0)  circle[radius=1.5pt];
\fill (2,0)  circle[radius=1.5pt];
\fill (3,0)  circle[radius=1.5pt];
 
 \node at (1,-.5) {$\scalebox{.75}{$\pm$}$};
\node at (2,-.5) {$\scalebox{.75}{$\mp$}$};
\node at (3,-.5) {$\scalebox{.75}{$\mp$}$};

\end{tikzpicture}
\]
\[
\begin{tikzpicture}[x=.5cm, y=.5cm,
    every edge/.style={
        draw,
      postaction={decorate,
                    decoration={markings}
                   }
        }
]

\node at (-.75,1) {$\scalebox{.75}{5)}$};

\draw[thick] (1,0) -- (3,2)--(5,0);
\draw[thick] (1.5,.5) -- (1.5,0);
\draw[thick] (1.5,.5) -- (2,0);
\draw[thick] (3,0) -- (3,2);
 
\fill [color=black,opacity=0.2]   (.5,0)--(.5,2)--(3,2)--(1,0); 
\fill [color=black,opacity=0.2]   (1.5,0)--(1.5,.5)--(2,0); 
\fill [color=black,opacity=0.2]   (3,2)--(5,0)--(3,0)--(3,0.5)--(3,2); 

 \node at (1,.75) {$\scalebox{.5}{$c-1$}$};
 \node at (.8,-.25) {$\scalebox{.5}{$c-1$}$};
 \node at (1.5,-.25) {$\scalebox{.5}{$c$}$};
 \node at (2,-.25) {$\scalebox{.5}{$c$}$};
 \node at (3,-.25) {$\scalebox{.5}{$c$}$};
 \node at (5,-.25) {$\scalebox{.5}{$c$}$};

\end{tikzpicture}
\qquad
\begin{tikzpicture}[x=.4cm, y=.4cm,
    every edge/.style={
        draw,
      postaction={decorate,
                    decoration={markings}
                   }
        }
]

\draw[thick] (1,0) to[out=90,in=90] (3,0);
\draw[thick] (1,0) to[out=90,in=90] (2,0);

\fill (1,0)  circle[radius=1.5pt];
\fill (2,0)  circle[radius=1.5pt];
\fill (3,0)  circle[radius=1.5pt];
 
 \node at (1,-.5) {$\scalebox{.75}{$\pm$}$};
\node at (2,-.5) {$\scalebox{.75}{$\mp$}$};
\node at (3,-.5) {$\scalebox{.75}{$\mp$}$};

\end{tikzpicture}
\qquad
%\]
%\[
\begin{tikzpicture}[x=.5cm, y=.5cm,
    every edge/.style={
        draw,
      postaction={decorate,
                    decoration={markings}
                   }
        }
]

\node at (-.75,1) {$\scalebox{.75}{6)}$};

\draw[thick] (1,0) -- (3,2)--(5,0);
\draw[thick] (1.5,.5) -- (1.5,0);
\draw[thick] (1.5,.5) -- (2,0);
\draw[thick] (3,0) -- (3,2);
 
\fill [color=black,opacity=0.2]   (1,0)--(1.5,.5)--(1.5,0)--(1,0); 
 \fill [color=black,opacity=0.2]   (2,0)--(1.5,.5)--(3,2)--(3,0)--(2.5,0)--(2,0); 
\fill [color=black,opacity=0.2]   (3,2)--(5.5,2)--(5.5,0)--(5,0);

 \node at (1,.75) {$\scalebox{.5}{$c$}$};
 \node at (.6,-.25) {$\scalebox{.5}{$c+1$}$};
 \node at (1.5,-.25) {$\scalebox{.5}{$c+1$}$};
 \node at (2.1,-.25) {$\scalebox{.5}{$c$}$};
 \node at (3,-.25) {$\scalebox{.5}{$c$}$};
 \node at (5,-.25) {$\scalebox{.5}{$c-1$}$};

\end{tikzpicture}
\qquad
\begin{tikzpicture}[x=.4cm, y=.4cm,
    every edge/.style={
        draw,
      postaction={decorate,
                    decoration={markings}
                   }
        }
]

\draw[thick] (1,0) to[out=90,in=90] (2,0);
\draw[thick] (2,0) to[out=90,in=90] (3,0);

\fill (1,0)  circle[radius=1.5pt];
\fill (2,0)  circle[radius=1.5pt];
\fill (3,0)  circle[radius=1.5pt];
 
 \node at (1,-.5) {$\scalebox{.75}{$\pm$}$};
\node at (2,-.5) {$\scalebox{.75}{$\mp$}$};
\node at (3,-.5) {$\scalebox{.75}{$\pm$}$};

\end{tikzpicture}
\]
\end{proof}
As an immediate consequence of the previous lemma we get the following description of the ternary oriented Thompson group.
\begin{proposition}\label{prop1}
It holds
$$
\vec{F}_3 =\{(T_+,T_-)\in F_3\; | \; c_+(i)\equiv_2 c_-(i) \; \forall \;  i\geq 0\}\; .
$$
\end{proposition}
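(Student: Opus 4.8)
The plan is to deduce the proposition directly from Lemma \ref{lemma-sign}, using the fact (recalled just before Remark \ref{rem1}) that $\Gamma(T_+,T_-)$ is $2$-colourable if and only if the canonical $2$-colourings of the trees $\Gamma_+(T_+)$ and $\Gamma_-(T_-)$ agree vertex by vertex, with the normalisation that the left-most vertex gets colour $+$. First I would observe that by definition $(T_+,T_-)\in\vec F_3$ exactly when these two colourings coincide on every vertex of the $\Gamma$-graph, i.e. on every point of $-1/2+2\IN_0$. So it suffices to translate "the colour of the vertex $2i-1/2$ in $\Gamma_+$ equals its colour in $\Gamma_-$, for all $i\geq 0$" into the arithmetic condition on weights.

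The second step is the translation itself. By Lemma \ref{lemma-sign}, the colour of the vertex $2i-1/2$ in $\Gamma_\pm$ is $+$ precisely when $c_\pm(2i)$ is even and $-$ precisely when $c_\pm(2i)$ is odd. Hence the colourings of $\Gamma_+$ and $\Gamma_-$ agree at the vertex $2i-1/2$ if and only if $c_+(2i)\equiv_2 c_-(2i)$. Running over all $i\geq 0$, we get
$$
\vec F_3=\{(T_+,T_-)\in F_3 \mid c_+(2i)\equiv_2 c_-(2i)\ \forall\, i\geq 0\}.
$$
Here I should be slightly careful about the indexing convention: the vertices of the $\Gamma$-graph were identified with $\{-1/2, 1+1/2, 3+1/2,\dots\}$, so "vertex $2i-1/2$'' for $i\geq 0$ is a harmless renaming (the $i=0$ term is the left-most vertex $-1/2$, whose colour is $+$ by convention, matching $c_\pm(0)=0$ even on both sides, so this imposes no constraint and is consistent).

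The final step upgrades the condition "for all even indices $2i$'' to "for all indices $i$''. This is exactly the content of the first sentence of Lemma \ref{lemma-sign}: $c_\pm(2i)=c_\pm(2i-1)$ for all $i\geq 1$. Therefore $c_+(2i)\equiv_2 c_-(2i)$ holds for all $i\geq 0$ if and only if $c_+(j)\equiv_2 c_-(j)$ holds for all $j\geq 0$ (pair up each odd index with the next even one, and note $j=0$ is covered). Substituting this into the displayed description gives precisely the claimed equality
$$
\vec F_3=\{(T_+,T_-)\in F_3\mid c_+(i)\equiv_2 c_-(i)\ \forall\, i\geq 0\},
$$
which completes the proof. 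I do not anticipate a genuine obstacle here — the whole statement is bookkeeping on top of Lemma \ref{lemma-sign} — the only point demanding a little care is matching the two indexing conventions (leaves of the tree diagrams on $\IN_0$ versus vertices of the $\Gamma$-graph on $-1/2+2\IN_0$) and checking that the number of leaves equals (number of $4$-valent vertices)$+1$ on each side, so that the ranges of $i$ in both descriptions line up.
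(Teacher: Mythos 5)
Your proposal is correct and is exactly the argument the paper intends: the paper states Proposition \ref{prop1} as an immediate consequence of Lemma \ref{lemma-sign}, and your write-up simply makes explicit the same three steps (2-colourability of $\Gamma(T_+,T_-)$ means the canonical colourings of $\Gamma_+$ and $\Gamma_-$ agree, the lemma translates each colour into the parity of $c_\pm(2i)$, and $c_\pm(2i)=c_\pm(2i-1)$ extends the parity condition to all indices). No gaps; the indexing care you flag is the only point of bookkeeping and you handle it correctly.
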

The natural action of an element $\phi\in F_3$ on the numbers in $[0,1]$, expressed in ternary expansion is described in Figure \ref{compute}.  
The number $t$ enters into the top of the tree diagram, 
follows a path towards the root of the bottom tree according to the displayed rules and what emerges at the bottom is the image of $t$ under the homeomorphism $\phi$ %)
,  cf. \cite{BM}. Note that there is a change of direction only when the number comes across a vertex of degree $4$ (that is, the number is unchanged when it comes across a leaf). 
Now follow some examples with explicit computations. % see Examples \ref{exe2}, \ref{esey2}, \ref{ese3} in Section 2 below. 
%For some explicit computations see Examples \ref{exe2}, \ref{esey2}, \ref{ese3} in Section 2 below. 
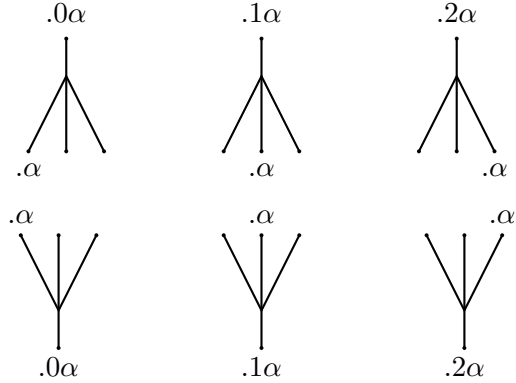
\begin{figure}
\caption{The local rules for computing the action of $F_3$ on numbers expressed in ternary expansion.}\label{compute}
%\\
\phantom{This text will be invisible} 
 \[
 \begin{tikzpicture}[x=1cm, y=1cm,
    every edge/.style={
        draw,
      postaction={decorate,
                    decoration={markings}
                   }
        }
]

\draw[thick] (0,0) -- (.5,1)--(1,0);
 \draw[thick] (.5,0)--(.5,1.5);

\fill (0,0)  circle[radius=.75pt];
\fill (.5,0)  circle[radius=.75pt];
\fill (1,0)  circle[radius=.75pt];
\fill (.5,1.5)  circle[radius=.75pt];

\node at (.5,1.85) {$\scalebox{1}{$.0\alpha$}$};
\node at (0,-.25) {$\scalebox{1}{$.\alpha$}$};
%\node at (.5,-.25) {$\scalebox{1}{$v$}$};
%\node at (1,-.25) {$\scalebox{1}{$w$}$};

\end{tikzpicture}
\qquad\qquad
\begin{tikzpicture}[x=1cm, y=1cm,
    every edge/.style={
        draw,
      postaction={decorate,
                    decoration={markings}
                   }
        }
]

\draw[thick] (0,0) -- (.5,1)--(1,0);
 \draw[thick] (.5,0)--(.5,1.5);

\fill (0,0)  circle[radius=.75pt];
\fill (.5,0)  circle[radius=.75pt];
\fill (1,0)  circle[radius=.75pt];
\fill (.5,1.5)  circle[radius=.75pt];

\node at (.5,1.85) {$\scalebox{1}{$.1\alpha$}$};
\node at (.5,-.25) {$\scalebox{1}{$.\alpha$}$};

\end{tikzpicture}
\qquad\qquad
\begin{tikzpicture}[x=1cm, y=1cm,
    every edge/.style={
        draw,
      postaction={decorate,
                    decoration={markings}
                   }
        }
]

\draw[thick] (0,0) -- (.5,1)--(1,0);
 \draw[thick] (.5,0)--(.5,1.5);

\fill (0,0)  circle[radius=.75pt];
\fill (.5,0)  circle[radius=.75pt];
\fill (1,0)  circle[radius=.75pt];
\fill (.5,1.5)  circle[radius=.75pt];

\node at (.5,1.85) {$\scalebox{1}{$.2\alpha$}$};
\node at (1,-.25) {$\scalebox{1}{$.\alpha$}$};

\end{tikzpicture}
 \]
 \[
\begin{tikzpicture}[x=1cm, y=1cm,
    every edge/.style={
        draw,
      postaction={decorate,
                    decoration={markings}
                   }
        }
]

\draw[thick] (0,1.5) -- (.5,.5)--(1,1.5);
 \draw[thick] (.5,0)--(.5,1.5);

\fill (0,1.5)  circle[radius=.75pt];
\fill (.5,0)  circle[radius=.75pt];
\fill (1,1.5)  circle[radius=.75pt];
\fill (.5,1.5)  circle[radius=.75pt];

\node at (.5,-.25) {$\scalebox{1}{$.0\alpha$}$};
\node at (0,1.75) {$\scalebox{1}{$.\alpha$}$};
%\node at (.5,1.75) {$\scalebox{1}{$v$}$};
%\node at (1,1.75) {$\scalebox{1}{$w$}$};
\node at (1,-.25) {$\scalebox{1}{}$};

\end{tikzpicture}
\qquad\qquad
\begin{tikzpicture}[x=1cm, y=1cm,
    every edge/.style={
        draw,
      postaction={decorate,
                    decoration={markings}
                   }
        }
]

\draw[thick] (0,1.5) -- (.5,.5)--(1,1.5);
 \draw[thick] (.5,0)--(.5,1.5);

\fill (0,1.5)  circle[radius=.75pt];
\fill (.5,0)  circle[radius=.75pt];
\fill (1,1.5)  circle[radius=.75pt];
\fill (.5,1.5)  circle[radius=.75pt];

\node at (.5,-.25) {$\scalebox{1}{$.1\alpha$}$};
\node at (.5,1.75) {$\scalebox{1}{$.\alpha$}$};
%\node at (.5,1.75) {$\scalebox{1}{$v$}$};
%\node at (1,1.75) {$\scalebox{1}{$w$}$};
\node at (1,-.25) {$\scalebox{1}{}$};

\end{tikzpicture}
\qquad\qquad
\begin{tikzpicture}[x=1cm, y=1cm,
    every edge/.style={
        draw,
      postaction={decorate,
                    decoration={markings}
                   }
        }
]

\draw[thick] (0,1.5) -- (.5,.5)--(1,1.5);
 \draw[thick] (.5,0)--(.5,1.5);

\fill (0,1.5)  circle[radius=.75pt];
\fill (.5,0)  circle[radius=.75pt];
\fill (1,1.5)  circle[radius=.75pt];
\fill (.5,1.5)  circle[radius=.75pt];

\node at (.5,-.25) {$\scalebox{1}{$.2\alpha$}$};
\node at (1,1.75) {$\scalebox{1}{$.\alpha$}$};
%\node at (.5,1.75) {$\scalebox{1}{$v$}$};
%\node at (1,1.75) {$\scalebox{1}{$w$}$};
\node at (1,-.25) {$\scalebox{1}{}$};

\end{tikzpicture}
\]
\end{figure}

\begin{example}\label{exe2}
 The generator $y_0$ is an example of an element in $F_3\setminus \vec{F}_3$. Below are the tree diagram, the shaded strip,     the corresponding $\Gamma$-graph which is not $2$-colourable/bipartite and the description of $y_0$ in terms of the action on $[0,1]$.
\[
\begin{tikzpicture}[x=.5cm, y=.5cm,
    every edge/.style={
        draw,
      postaction={decorate,
                    decoration={markings}
                   }
        }
]

\node at (-1,0) {$\scalebox{1}{$y_0=$}$};

\draw[thick] (0,0) -- (2,2)--(4,0)--(2,-2)--(0,0);
\draw[thick] (1,1) -- (1,0)--(2,-2);
\draw[thick] (1,1) -- (2,0)--(3,-1);
\draw[thick] (2,2) -- (3,0)--(3,-1);

 \draw[thick] (2,2)--(2,2.5);
 \draw[thick] (2,-2)--(2,-2.5);

\end{tikzpicture}
\qquad
\begin{tikzpicture}[x=.5cm, y=.5cm,
    every edge/.style={
        draw,
      postaction={decorate,
                    decoration={markings}
                   }
        }
]

\draw[thick] (0,0) -- (2,2)--(4,0)--(2,-2)--(0,0);
\draw[thick] (1,1) -- (1,0)--(2,-2);
\draw[thick] (1,1) -- (2,0)--(3,-1);
\draw[thick] (2,2) -- (3,0)--(3,-1);

%\fill [color=black,opacity=0.2]   (2,-2.5)--(-.5,-2.5)--(-.5,2.5)--(2,2.5)--(2,2)--(0,0)--(2,-2); 
\fill [color=black,opacity=0.2]   (2,-2)--(-.5,-2)--(-.5,2)--(2,2)--(2,2)--(0,0)--(2,-2); 
\fill [color=black,opacity=0.2]   (1,1)--(3,-1)--(2,-2)--(1,0)--(1,1); 
\fill [color=black,opacity=0.2]   (2,2)--(4,0)--(3,-1)--(3,0)--(2,2);

 \draw[thick] (2,2)--(2,2.5);
 \draw[thick] (2,-2)--(2,-2.5);

\end{tikzpicture}
\qquad 
\begin{tikzpicture}[x=.5cm, y=.5cm,
    every edge/.style={
        draw,
      postaction={decorate,
                    decoration={markings}
                   }
        }
]

\draw[thick] (1,0) to[out=90,in=90] (3,0);
\draw[thick] (1,0) to[out=90,in=90] (2,0);

\draw[thick] (1,0) to[out=-90,in=-90] (2,0); 
 \draw[thick] (2,0) to[out=-90,in=-90] (3,0);

\fill (1,0)  circle[radius=1.5pt];
\fill (2,0)  circle[radius=1.5pt];
\fill (3,0)  circle[radius=1.5pt];
 
 \node at (1,-2.5) {};
 \node at (-1,0) {$\Gamma(y_0)=$};

\end{tikzpicture}
\]
\begin{align*}
y_0(t)=
  \left\{ {\begin{array}{ll}
   .0\alpha & \textrm{ if $t=.00\alpha$} \\
   .1\alpha & \textrm{ if $t=.01\alpha$} \\
   .20\alpha & \textrm{ if $t=.02\alpha$} \\
   .21\alpha & \textrm{ if $t=.1\alpha$} \\
   .22\alpha & \textrm{ if $t=.2\alpha$} \\
  \end{array} } \right.
\end{align*}
One can also show that this element does not belong to $\vec{F}_3$ by computing the weights. Indeed, one has $c_+(0)=0=c_-(0)$, $c_+(1)=1=c_-(1)$, $c_+(2)=1=c_-(2)$, $c_+(3)=1\neq 2=c_-(3)$, $c_+(4)=1\neq 2=c_-(4)$. 
\end{example}

\begin{example}\label{esey2}
Thanks to the following element we show that the inclusion $\vec{F}_2\leq \vec{F}_3$ is actually strict. 
 Below are the tree diagram, the shaded strip, the corresponding $\Gamma$-graph 
  and the description of the element %description of $y_0$ 
 in terms of the action on $[0,1]$
\[
\begin{tikzpicture}[x=.4cm, y=.4cm,
    every edge/.style={
        draw,
      postaction={decorate,
                    decoration={markings}
                   }
        }
]

\node at (0,0) {$\scalebox{1}{$y_1^2=$}$};

\draw[thick] %(0,0) -- (4,4)
(5,3)--(8,0)--(5,-3);
%(4,-4)--(0,0);
%\draw[thick] (4,4) -- (1,0)--(4,-4);
\draw[thick] (5,-3)--(2,0) -- (5,3);
\draw[thick] (5,-3)--(3,0) -- (5,2)--(5,3);
\draw[thick] (5,2)--(7,0)--(7,-.95)--(6,0)--(5,2);
\draw[thick] (4,1)--(4,0) -- (6,-1.875)--(5,0)--(4,1);

 \draw[thick] (5,3)--(5,3.5);
 \draw[thick] (5,-3)--(5,-3.5);

\end{tikzpicture}
\qquad 
\begin{tikzpicture}[x=.4cm, y=.4cm,
    every edge/.style={
        draw,
      postaction={decorate,
                    decoration={markings}
                   }
        }
]

 \draw[thick] (5,3)--(5,3.5);
 \draw[thick] (5,-3)--(5,-3.5);
 
\draw[thick] %(0,0) -- (4,4)
(5,3)--(8,0)--(5,-3);
%(4,-4)--(0,0);
%\draw[thick] (4,4) -- (1,0)--(4,-4);
\draw[thick] (5,-3)--(2,0) -- (5,3);
\draw[thick] (5,-3)--(3,0) -- (5,2)--(5,3);
\draw[thick] (5,2)--(7,0)--(7,-.95)--(6,0)--(5,2);
\draw[thick] (4,1)--(4,0) -- (6,-1.875)--(5,0)--(4,1);
%\fill [color=black,opacity=0.2]   (4,4)--(-.5,4)--(-.5,-4)--(4,-4)--(0,0)--(4,4); 
\fill [color=black,opacity=0.2]   (1,0)--(1,3)--(5,3)--(2,0)--(5,-3)--(1,-3); 
\fill [color=black,opacity=0.2]   (5,3)--(8,0)--(7,-1)--(7,0)--(5,2)--(5,3); 
\fill [color=black,opacity=0.2]   (5,2)--(6,0)--(7,-1)--(6,-2)--(5,0)--(4,1)--(5,2); 
\fill [color=black,opacity=0.2]   (4,1)--(4,0)--(6,-2)--(5,-3)--(3,0)--(4,1); 

%\fill [color=black,opacity=0.2]   (1,3)--(5,3)--(5,3.5)--(1,3.5);
%\fill [color=black,opacity=0.2]   (1,-3)--(5,-3)--(5,-3.5)--(1,-3.5);

\end{tikzpicture}
\qquad 
\begin{tikzpicture}[x=.4cm, y=.4cm,
    every edge/.style={
        draw,
      postaction={decorate,
                    decoration={markings}
                   }
        }
]

 \draw[thick] (2,0) to[out=90,in=90] (5,0); 
\draw[thick] (4,0) to[out=90,in=90] (5,0); 
\draw[thick] (3,0) to[out=90,in=90] (4,0);

  \draw[thick] (2,0) to[out=-90,in=-90] (3,0);  
 \draw[thick] (3,0) to[out=-90,in=-90] (4,0);  
 \draw[thick] (4,0) to[out=-90,in=-90] (5,0);

 \fill (2,0)  circle[radius=1.5pt];
\fill (3,0)  circle[radius=1.5pt];
\fill (4,0)  circle[radius=1.5pt];
\fill (5,0)  circle[radius=1.5pt];
 
 \node at (1,-3.5) {};
 \node at (0,0) {$\Gamma(y_1^2)=$};

\end{tikzpicture}
\]
\begin{align*}
y_1^2(t)=
  \left\{ {\begin{array}{ll}
   .0\alpha & \textrm{ if $t=.0\alpha$} \\
   .1\alpha & \textrm{ if $t=.100\alpha$} \\
   .20\alpha & \textrm{ if $t=.101\alpha$} \\
   .21\alpha & \textrm{ if $t=.102\alpha$} \\
   .220\alpha & \textrm{ if $t=.11\alpha$} \\
   .221\alpha & \textrm{ if $t=.12\alpha$} \\
   .222\alpha & \textrm{ if $t=.2\alpha$} \\
  \end{array} } \right.
\end{align*}
One can also check that the weights of the leaves are the same mod $2$. Indeed, $c_+(0)=0=c_-(0)$, $c_+(1)=3\equiv_2 1=c_-(1)$, $c_+(2)=3\equiv_2 1=c_-(2)$, $c_+(3)=2=c_-(3)$, $c_+(4)=2=c_-(4)$, $c_+(5)=1\equiv_2 3=c_-(5)$, $c_+(6)=1\equiv_2 3=c_-(6)$. 
 \end{example}
\begin{example}\label{ese3}
The following element is in
$\vec{F}_3\setminus\vec{F}_2$ and will be part of the generating set of $\vec{F}_3$.
\[\begin{tikzpicture}[x=.5cm, y=.5cm,
    every edge/.style={
        draw,
      postaction={decorate,
                    decoration={markings}
                   }
        }
]

 \draw[thick] (1,0) -- (3,-2)--(5,0);
 \draw[thick] (1.5,0) -- (3,-2);
 \draw[thick] (2,0) -- (3.5,-1.5)--(2.5,0);
 \draw[thick] (3,0) -- (3.75,-1.1)--(3.5,0);
 
 \draw[thick] (1,0) -- (3,2)--(5,0);
\draw[thick] (1.5,.5) -- (1.5,0);
\draw[thick] (1.5,.5) -- (2,0);
\draw[thick] (3,0) -- (3,2);
\draw[thick] (2.5,0) -- (3,.5)--(3.5,0);
 
 \node at (-0.5,0) {$\scalebox{1}{$y_0y_3=$}$};

 \draw[thick] (3,2)--(3,2.5);
 \draw[thick] (3,-2)--(3,-2.5);

\end{tikzpicture}
\qquad 
\begin{tikzpicture}[x=.4cm, y=.4cm,
    every edge/.style={
        draw,
      postaction={decorate,
                    decoration={markings}
                   }
        }
]

\node at (2.5,-2.25) {$\scalebox{1}{}$};

\draw[thick] (1,0) to[out=90,in=90] (2,0);
\draw[thick] (1,0) to[out=90,in=90] (4,0); 
\draw[thick] (3,0) to[out=90,in=90] (4,0); 
%\draw[thick] (3,0) to[out=90,in=90] (4,0); 

\draw[thick] (1,0) to[out=-90,in=-90] (2,0); 
 \draw[thick] (2,0) to[out=-90,in=-90] (3,0);  
 \draw[thick] (3,0) to[out=-90,in=-90] (4,0);

\fill (1,0)  circle[radius=1.5pt];
\fill (2,0)  circle[radius=1.5pt];
\fill (3,0)  circle[radius=1.5pt];
\fill (4,0)  circle[radius=1.5pt];
  
% \node at (1,-3.5) {};
% \node at (-1,0) {$\Gamma(y_3^2)=$};

\end{tikzpicture}
\]
\begin{align*}
y_0y_3(t)=
  \left\{ {\begin{array}{ll}
   .0\alpha & \textrm{ if $t=.00\alpha$} \\
   .1\alpha & \textrm{ if $t=.01\alpha$} \\
   .20\alpha & \textrm{ if $t=.02\alpha$} \\
   .21\alpha & \textrm{ if $t=.10\alpha$} \\
   .220\alpha & \textrm{ if $t=.11\alpha$} \\
   .221\alpha & \textrm{ if $t=.12\alpha$} \\
   .222\alpha & \textrm{ if $t=.2\alpha$} \\
  \end{array} } \right.
\end{align*}
This element does not belong to $\vec{F}_2$ and actually is not even in $F_2$. Indeed, by the very definition of $\iota$, % in Section 1, 
an element of $\iota(F_2)$ (represented as a reduced tree diagram), cannot have a split attached below a middle edge.
\end{example}

Given a triadic rational expressed in ternary expansion $.a_1a_2\cdots a_n$, there is a corresponding path (starting from the root) 
in the tree of the standard triadic intervals. We denote by $c(.a_1a_2\cdots a_n)$ the weight of the end vertex  of this  path.
The next theorem is the main result of this section and its proof can be easily deduced from the preceding discussion. A different proof using maximality of $\vec{F}_3$ in a certain subgroup of $F_3$ will be given in Section 4 below.
\begin{theorem}\label{theo-stab}
The  ternary oriented Thompson group $\vec{F}_3$ is the stabilizer of the following subset of the triadic fractions
$$
Z:=\{ .a_1a_2\cdots a_n \; | \; \# \textrm{ of $1$'s is even}, c(.a_1a_2\cdots a_n) \textrm{ is even}\} \; .
$$
\end{theorem}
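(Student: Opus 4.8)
The plan is to characterize the stabilizer of $Z$ directly in terms of the weight function $c$, and then invoke Proposition \ref{prop1}. Recall that an element $\phi = (T_+,T_-) \in F_3$ acts on triadic rationals by feeding the ternary expansion into the top tree, turning at each degree-$4$ vertex, and reading off at the bottom; equivalently, $\phi$ sends a leaf $i$ of $T_+$ (read as a triadic subinterval) to the corresponding leaf $i$ of $T_-$. So I would first observe that a triadic rational $t$ with $t = .a_1\cdots a_n$ factoring through a leaf $i$ of $T_+$ is mapped to $\phi(t) = $ (the triadic rational addressed by leaf $i$ of $T_-$), and the two finite words labelling these leaf-paths have the property that their weights are $c_+(i)$ and $c_-(i)$ respectively, while the parity of the number of $1$'s in them are $d_+(i) \bmod 2$ and $d_-(i) \bmod 2$. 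The key point, which needs a small lemma, is that a triadic rational lies in $Z$ if and only if along its path to the addressed vertex $v$, both $c(v)$ is even and the number of middle edges $d(v)$ is even; and moreover whether $t \in Z$ depends only on which leaf of a given finite tree $t$ passes through, \emph{plus} the tail $\alpha$ — so I should work leafwise and track how "passing through leaf $i$ of $T_\pm$" interacts with membership in $Z$ for the subsequent tail.

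The second step is to make this precise. Fix $\phi = (T_+,T_-)$ with $T_\pm$ having $m$ leaves. For a leaf $i$, let $\beta^{\pm}_i$ denote the finite ternary word addressing leaf $i$ in $T_\pm$. A general element of $[0,1]$ routed through leaf $i$ of $T_+$ has the form $t = .\beta^+_i\gamma$ for an arbitrary tail $\gamma$, and $\phi(t) = .\beta^-_i\gamma$. Now I would show: $c(.\beta^{\pm}_i\gamma)$ and the count of $1$'s in $.\beta^{\pm}_i\gamma$ are each determined mod $2$ by ($c_\pm(i)$, resp.\ $d_\pm(i)$) together with a \emph{parity correction depending only on $\gamma$ and on whether the last non-trivial even/odd block of $\beta^\pm_i$ is "open"} — i.e.\ the weight of a concatenation is the weight of the prefix plus a contribution of the suffix that depends on the suffix and on the parity of the number of middle edges already traversed in the prefix (because the definition of $c$ alternates the roles of $0$'s and $2$'s between odd and even blocks, and that alternation is governed exactly by $d$). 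This is the one genuinely fiddly bookkeeping step: the weight is \emph{not} additive under concatenation, but its defect is controlled purely by $d_\pm(i) \bmod 2$. So the pair $(c(t) \bmod 2,\ \#1\text{'s in }t \bmod 2)$ equals $\big(c_\pm(i) + \varepsilon(\gamma; d_\pm(i)),\ d_\pm(i) + \delta(\gamma)\big) \bmod 2$ for correction functions $\varepsilon,\delta$ that do not depend on the tree.

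The third step finishes it. From the above, $t = .\beta^+_i\gamma \in Z \iff$ the pair above (with $+$) is $(0,0)$, and $\phi(t) = .\beta^-_i\gamma \in Z \iff$ the pair above (with $-$) is $(0,0)$. Since $\delta(\gamma)$ is tree-independent, $\phi$ preserves $Z$ for all such $t$ if and only if $d_+(i) \equiv_2 d_-(i)$ and $c_+(i) + \varepsilon(\gamma;d_+(i)) \equiv_2 c_-(i) + \varepsilon(\gamma; d_-(i))$ for all $i$ and all admissible tails $\gamma$; granting $d_+(i)\equiv_2 d_-(i)$, the $\varepsilon$-terms cancel and this reduces to $c_+(i) \equiv_2 c_-(i)$. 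Conversely Lemma \ref{lemma-sign} (which ties $c_\pm$ to the $\Gamma$-colouring and in particular forces $c_\pm(2i)\equiv_2 d_\pm$-type constraints via Lemma \ref{lemma1}) shows these two congruence systems collapse to the single condition $c_+(i)\equiv_2 c_-(i)$ for all $i$, which by Proposition \ref{prop1} is exactly $\phi \in \vec F_3$. Hence $\mathrm{Stab}(Z) = \vec F_3$. I expect the main obstacle to be step two — pinning down the correction terms $\varepsilon,\delta$ and verifying the "defect is controlled by $d \bmod 2$" claim — which is really an induction on word length parallel to the one in Lemma \ref{lemma-sign}, but one must handle carefully the boundary case where $\beta^\pm_i$ ends inside an odd block versus an even block, and the dyadic-style ambiguity of ternary expansions ending in repeating $0$'s or $2$'s.
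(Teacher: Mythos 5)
Your proposal is correct in substance, but it takes a genuinely different route from the paper's written proof. The paper proves Theorem \ref{theo-stab} in two lines as a corollary of its maximality machinery in Section 4: every generator of $\vec{F}_3$ visibly preserves $Z$, so $\vec{F}_3\leq \mathrm{Stab}(Z)$; and since any subgroup strictly containing $\vec{F}_3$ must contain $G_3$ (hence $y_0^2$) or all of $F_3$ (hence $y_0$), while neither $y_0$ nor $y_0^2$ stabilizes $Z$, equality follows. Your argument is instead direct and stays inside Section 2: route each triadic rational through a leaf, note that the pair (parity of the number of $1$'s, parity of the weight) of $.\beta^{\pm}_i\gamma$ is $\bigl(d_\pm(i)+\delta(\gamma),\, c_\pm(i)+\varepsilon(\gamma;d_\pm(i))\bigr)$ mod $2$ with tail corrections independent of the tree, and conclude that $\mathrm{Stab}(Z)$ is cut out by the leafwise congruences, which Proposition \ref{prop1} then identifies with $\vec{F}_3$. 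This is essentially the ``easily deduced from the preceding discussion'' proof the paper alludes to but never writes out; it buys independence from the maximality theorem at the cost of concatenation bookkeeping. Two steps should be tightened. First, for the ``only if'' direction you must check that, for either starting parity $p$, tails realizing all four parity pairs $\bigl(\delta(\gamma),\varepsilon(\gamma;p)\bigr)$ exist (e.g.\ $\emptyset$; $2$ or $0$; $1$; $10$ or $12$), otherwise the biconditional ``$t\in Z \Leftrightarrow \phi(t)\in Z$'' does not obviously force the componentwise congruences. Second, the collapse to the single condition $c_+(i)\equiv_2 c_-(i)$ is cleaner than your somewhat vague appeal to Lemmas \ref{lemma1} and \ref{lemma-sign}: for every ternary tree one has $d(i)\equiv i \pmod 2$ at the $i$-th leaf (immediate induction on adding a caret), so $d_+(i)\equiv_2 d_-(i)$ holds automatically for \emph{every} element of $F_3$ and the stabilizer condition reduces outright to $c_+(i)\equiv_2 c_-(i)$ for all $i$.
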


\section{A generating set for $\vec{F}_3$} 
The aim of this section is to exhibit a generating set for $\vec{F}_3$ and to show that it is finitely generated.
We recall that an element of $F_3$ is said to be positive if it can be expressed as the product of the generators $y_0, y_1, \ldots$ (but not of their inverses).
\begin{lemma}\label{pos-gen}
The subgroup $\vec{F}_3$ is generated by its positive elements. 
\end{lemma}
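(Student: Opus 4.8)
The plan is to show that an arbitrary element $g=(T_+,T_-)\in\vec F_3$ can be written as $g=p_+ p_-^{-1}$ where both $p_+$ and $p_-$ are positive elements of $\vec F_3$. The idea is the standard one for Thompson-type groups: given the tree-pair diagram for $g$, the top tree $T_+$ (built from the root by successive splits) represents a positive element $p_+$, since each split corresponds to left-multiplication by some $y_i$; similarly the bottom tree $T_-$, read as a tree from its root, represents a positive element $p_-$, and then $g=p_+ p_-^{-1}$. (Concretely, $p_+$ is the element whose tree-pair diagram has top tree $T_+$ and bottom tree the all-right-caret tree with the same number of leaves, and similarly for $p_-$; the precise bookkeeping is routine and can be suppressed.) So the only thing to check is that each of $p_+$ and $p_-$ individually lies in $\vec F_3$.

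First I would record the right criterion: by Proposition~\ref{prop1}, an element $(S_+,S_-)\in F_3$ lies in $\vec F_3$ exactly when $c_+(i)\equiv_2 c_-(i)$ for every leaf $i$. For the single tree $T_+$, Lemma~\ref{lemma-sign} tells us that the parity of the weight sequence $(c_+(i))_i$ is encoded by the $2$-colouring of $\Gamma_+(T_+)$, which (being a tree) is always $2$-colourable. Thus the parity vector of the weights of a tree's leaves is an intrinsic invariant of that tree, computed from its $\Gamma$-subgraph colouring. The key observation is then: since $g\in\vec F_3$, the colourings of $\Gamma_+(T_+)$ and $\Gamma_-(T_-)$ agree (this is exactly the discussion preceding Remark~\ref{rem1}), hence $c_+(i)\equiv_2 c_-(i)$ for all $i$. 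Now I claim that the "reference" tree I use for the bottom of $p_+$ (and for the top of $p_-$) can be chosen to have \emph{all weights even}, so that its leaf-weight parity vector is the all-zero vector.

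The second, and main, step is therefore to exhibit, for each $n$ that arises as a number of leaves, a specific $n$-leaf ternary tree $R_n$ all of whose leaves have even weight, and to check that using $R_n$ as the complementary tree keeps both factors positive. A natural candidate is the "right vine'' (repeatedly split the right-most interval); using Lemma~\ref{lemma1} one sees that along an all-right path the odd/even word structure forces every leaf weight to be $0$, so $R_n$ has the all-zero parity vector. Then $p_+:=(T_+,R_n)$ satisfies $c_+(i)\equiv_2 0\equiv_2 c_{R_n}(i)$, so $p_+\in\vec F_3$; it is positive because its bottom tree is contained in its top tree as an initial right-vine after reduction — more precisely $p_+$ is a product of generators since right-multiplying $g$ by the positive element $(R_n, T_-)$ (whose $\Gamma$-graph argument is the mirror of the above) gives $p_+$. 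Symmetrically $p_-:=(R_n,T_-)\in\vec F_3$ and is positive, and $g=p_+p_-^{-1}$.

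The main obstacle I anticipate is purely organisational rather than conceptual: one must be careful that the tree $R_n$ used really is a common "intermediate'' tree, i.e. that $(T_+,R_n)$ and $(R_n,T_-)$ are genuinely \emph{positive} (expressible without inverses) and not merely elements of $\vec F_3$ — this requires identifying $R_n$ with a specific tree obtained by splitting from the root and matching it against $T_\pm$, possibly after enlarging $n$ by adding opposing carets to $(T_+,T_-)$ so that both $T_+$ and $T_-$ dominate the chosen right vine. Once the right vine is seen to have the all-even weight vector (immediate from Lemma~\ref{lemma1}) and the colouring-agreement from $g\in\vec F_3$ is invoked, the argument closes. Finitely generated-ness is \emph{not} claimed in this lemma — only generation by positive elements — so no further work (e.g. Rademacher-type finite generating set extraction) is needed here; that is deferred to the subsequent results of Section~3.
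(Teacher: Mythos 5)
Your overall strategy --- factor $g=(T_+,T_-)$ through the right vine $V$, writing $g=(T_+,V)\cdot(V,T_-)$ with both factors (up to inversion) positive --- is the same as the paper's, but the step that is supposed to put the two factors inside $\vec{F}_3$ fails. The right vine does \emph{not} have an all-even weight vector: in the $5$-leaf vine the leaf reached from the root by the middle edge has weight $1$, and the leaf parities are $(0,1,1,0,0)$, continuing as $(0,1,1,0,0,1,1,\dots)$ for longer vines. Equivalently, by Lemma~\ref{lemma-sign} the $2$-colouring of the path graph $\Gamma_\pm(V)$ is the alternating one $+,-,+,-,\dots$, not the constant $+$. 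In fact no nontrivial tree can have all leaf weights even: $\Gamma_\pm$ is a tree with at least one edge, and a proper $2$-colouring of a graph with an edge must use both colours, so some $c(2i)$ is odd. Hence the reference tree $R_n$ you postulate does not exist. Moreover, even granting it, your assertion $c_+(i)\equiv_2 0$ is unjustified: $g\in\vec{F}_3$ only gives $c_+(i)\equiv_2 c_-(i)$ (Proposition~\ref{prop1}), not that these common parities vanish --- see Example~\ref{esey2}, whose parity vector is $(0,1,1,0,0,1,1)$. So as written, neither $(T_+,R_n)\in\vec{F}_3$ nor $(R_n,T_-)\in\vec{F}_3$ is established, which is the whole content of the lemma.

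The missing idea is a normalization of the \emph{colouring} rather than of the reference tree: since $\Gamma_\pm(V)$ is coloured alternately, what one needs is that the common colouring of $\Gamma_+(T_+)$ and $\Gamma_-(T_-)$ be the alternating one $+,-,+,-,\dots$, and this can be arranged by first adding pairs of opposing carets to the tree diagram of $g$; this is exactly how the paper's proof proceeds. After that normalization, $\Gamma(T_+,V)$ and $\Gamma(V,T_-)$ are $2$-colourable, so $(T_+,V),(V,T_-)\in\vec{F}_3$, and $(T_+,V)$ and $(V,T_-)^{-1}=(T_-,V)$ are positive, which gives the desired factorization. (There is also a small bookkeeping slip in your write-up: $(R_n,T_-)$ is not positive --- its inverse $(T_-,R_n)$ is --- so the factorization should read $g=(T_+,V)\cdot(T_-,V)^{-1}$.)
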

\begin{proof}
Let $(T_+,T_-)\in \vec{F}_3$. Possibly after adding pairs of opposing carets, we may suppose that the colouring of $\Gamma(T_+,T_-)$ is $+, -, + , -, \ldots$.
Now let $T$ be the tree only with merges attached to the right edges of other merges and with the same number of leaves as $T_\pm$. By construction, $\Gamma(T_+,T)$ and $\Gamma(T,T_-)$ are both $2$-colourable and thus $(T_+,T), (T,T_-)\in\vec{F}_3$. Clearly $(T_+,T)$ and $(T,T_-)^{-1}$ are positive. \\
\[
\begin{tikzpicture}[x=.5cm, y=.5cm,
    every edge/.style={
        draw,
      postaction={decorate,
                    decoration={markings}
                   }
        }
]

\node at (-.75,1) {$\scalebox{1}{$T=$}$};

\draw[thick] (1,0) -- (4.5,4.5)--(8,0);
\draw[thick] (4.5,4.5)--(2,0);
\draw[thick] (5,3.85)--(3,0);
\draw[thick] (5,3.85)--(4,0);
\draw[thick] (7,1.3)--(6,0);
\draw[thick] (7,1.3)--(7,0);

\node at (5.5,1.75) {$\scalebox{1}{$\ddots$}$};

\draw[thick] (4.5,5)--(4.5,4.5);

\end{tikzpicture}
\]
\end{proof}
\begin{lemma}\label{lemmashift}
Consider the endomorphism  $\varphi_R : F_3\to  F_3$ (herein called the right-shift) defined graphically  as
\[
\begin{tikzpicture}[x=1cm, y=1cm,
    every edge/.style={
        draw,
      postaction={decorate,
                    decoration={markings}
                   }
        }
]

\node at (-.45,0) {$\scalebox{1}{$\varphi_R$:}$};

\draw[thick] (0,0)--(.5,.5)--(1,0)--(.5,-.5)--(0,0);
\node at (1.5,0) {$\scalebox{1}{$\mapsto$}$};

\node at (.5,0) {$\scalebox{1}{$g$}$};
\node at (.5,.-.75) {$\scalebox{1}{}$};

 \draw[thick] (.5,.65)--(.5,.5);
 \draw[thick] (.5,-.65)--(.5,-.5);
 
\end{tikzpicture}
\begin{tikzpicture}[x=1cm, y=1cm,
    every edge/.style={
        draw,
      postaction={decorate,
                    decoration={markings}
                   }
        }
]
 
\node at (2.5,0) {$\scalebox{1}{$g$}$};

\draw[thick] (1.5,0)--(2.25,.75)--(3,0)--(2.25,-.75)--(1.5,0);
\draw[thick] (2,0)--(2.5,.5)--(3,0)--(2.5,-.5)--(2,0);
 
 \node at (1.5,.-.75) {$\scalebox{1}{}$};

\draw[thick] (2.25,.75)--(1.75,0)--(2.25,-.75);

 \draw[thick] (2.25,.75)--(2.25,.9);
 \draw[thick] (2.25,-.75)--(2.25,-.9);

\end{tikzpicture}
\]
Then $g\in\vec{F}_3$ if and only if $\varphi_R(g)\in\vec{F}_3$.
\end{lemma}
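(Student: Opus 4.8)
The plan is to use both descriptions of $\vec{F}_3$ available at this point: $g=(T_+,T_-)\in\vec{F}_3$ iff $\Gamma(T_+,T_-)$ is $2$-colourable, equivalently (Lemma~\ref{lemma-sign} together with Proposition~\ref{prop1}) iff $c_+(i)\equiv_2 c_-(i)$ for every leaf $i$. The key structural point is that $\varphi_R$ modifies the two trees $T_+$ and $T_-$ of a diagram by \emph{one and the same} local move: it prepends, on the left of each of $T_+$ and $T_-$, a fixed ($g$-independent) ``prefix'' — the carets displayed in the right-hand picture of the statement — and grafts the old diagram onto it in a fixed position. So everything reduces to understanding this single prefix, uniformly in $g$.

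First I would make the following description of $\varphi_R(g)$ precise from the picture: if $(T_+,T_-)$ is a reduced diagram for $g$ with leaves $0,\dots,n$, then in $\varphi_R(g)$ all leaves of $g$ are shifted to the right by a fixed amount $r$, finitely many new leaves $0,\dots,r-1$ are created by the prefix, the interior $4$-valent vertices of $T_\pm$ are untouched, and hence the part of $\Gamma(\varphi_R(g))$ carried by the (shifted) $g$-part of the diagram is a faithful copy of $\Gamma(g)$ — only the colours of the vertices ``coming from $g$'' may get globally swapped, while the colour of each new vertex is prescribed by the prefix alone. In weight language: prepending the prefix changes $c_+(i)$ and $c_-(i)$, for a leaf $i$ of $g$, by the same recipe (either adding a fixed constant, or interchanging the rôles of odd and even blocks in the definition of the weight, according to the parity of the number of middle edges in the path of the prefix down to the grafting point, cf. Lemma~\ref{lemma1}), and this recipe does not see whether we are in $T_+$ or in $T_-$. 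Consequently, for a shifted leaf $i'=i+r$ one has $c^{\varphi_R(g)}_+(i')\equiv_2 c^{\varphi_R(g)}_-(i')$ if and only if $c_+(i)\equiv_2 c_-(i)$, while for the new leaves $0,\dots,r-1$ the congruence holds automatically since the prefix is the same above and below; by Proposition~\ref{prop1} this is exactly the assertion $\varphi_R(g)\in\vec{F}_3\iff g\in\vec{F}_3$.

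Equivalently, and more cleanly, one can phrase the reduction purely graph-theoretically: $\Gamma(\varphi_R(g))$ is obtained from $\Gamma(g)$ by grafting onto it, at a single vertex and in a fixed way, the finite tree which is the $\Gamma$-contribution of the prefix (together with the new root). Since a tree is bipartite and attaching a tree to a graph at one vertex neither creates nor destroys an odd cycle, $\Gamma(\varphi_R(g))$ is bipartite iff $\Gamma(g)$ is, and by the very definition of the oriented subgroups this means $\varphi_R(g)\in\vec{F}_3\iff g\in\vec{F}_3$. The only genuine work — and the main obstacle — is the first step: extracting from the graphical definition of $\varphi_R$ the precise local picture of the prefix and checking that its $\Gamma$-contribution is tree-like and grafted at a single vertex (equivalently, in the weight formulation, that it transforms $c_+$ and $c_-$ by the same parity-coherent rule). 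This is a finite verification using the conventions of Section~\ref{sec-2} and Lemma~\ref{lemma1}; once it is in place, the conclusion is the elementary graph fact above, with no further dependence on $g$.
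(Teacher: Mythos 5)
Your proposal is correct and follows essentially the same route as the paper: the paper's proof consists precisely of the ``finite verification'' you defer, namely observing (via a picture) that passing from $g$ to $\varphi_R(g)$ changes $\Gamma(g)$ only by attaching one new vertex to the left-most vertex of $\Gamma(g)$ by a pair of parallel edges, which neither creates nor destroys $2$-colourability. One small correction to your graph-theoretic phrasing: the $\Gamma$-contribution of the prefix is not a tree grafted at a single vertex but a doubled edge (a $2$-cycle) at the left-most vertex --- still harmless for bipartiteness, and your weight formulation (all $c_\pm$ shift by the same amount, the two new leaves getting equal weights in top and bottom) goes through as stated.
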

\begin{proof}
As   shown below, the right-shift simply adds a pair of parallel edges attached only to the left-most vertex of $\Gamma(g)$
\[\begin{tikzpicture}[x=1cm, y=1cm,
    every edge/.style={
        draw,
      postaction={decorate,
                    decoration={markings}
                   }
        }
]

\node at (2.5,0) {$\scalebox{1}{$g$}$};
\fill [color=black,opacity=0.2] (1.5,0)--(2.25,.75)--(1,.75)--(1,-.75)--(2.25,-.75)--(1.5,0);
\fill [color=black,opacity=0.2] (2.25,.75)--(1.75,0)--(2.25,-.75)--(2.5,-.5)--(2,0)--(2.5,.5)--(2.25,.75);
%\fill [color=black,opacity=0.2] (2.25,.75)--(2.25,.9)--(1,.9)--(1,.75);
%\fill [color=black,opacity=0.2] (2.25,-.75)--(2.25,-.9)--(1,-.9)--(1,-.75);

\draw[thick] (1.5,0)--(2.25,.75)--(3,0)--(2.25,-.75)--(1.5,0);
\draw[thick] (2,0)--(2.5,.5)--(3,0)--(2.5,-.5)--(2,0);

\draw[thick] (2.25,.75)--(1.75,0)--(2.25,-.75);

 \draw[thick] (2.25,.75)--(2.25,.9);
 \draw[thick] (2.25,-.75)--(2.25,-.9);

\end{tikzpicture}
\qquad 
\begin{tikzpicture}[x=.4cm, y=.4cm,
    every edge/.style={
        draw,
      postaction={decorate,
                    decoration={markings}
                   }
        }
]

\node at (2.5,-1.75) {$\scalebox{1}{}$};

\node at (3.5,0) {$\scalebox{1}{$\ldots$}$};

\draw[thick] (2.5,-.5)--(2,0)--(2.5,.5);

\draw[thick] (1,0) to[out=90,in=90] (2,0);

\draw[thick] (1,0) to[out=-90,in=-90] (2,0);

\fill (1,0)  circle[radius=1.5pt];
\fill (2,0)  circle[radius=1.5pt]; 
\end{tikzpicture}
\]
\end{proof}
\begin{remark}
For all $i\geq 0$ we have that $\varphi_R(y_i)=y_{i+2}$.
\end{remark}
\begin{theorem}
The ternary oriented Thompson group $\vec{F}_3$ is generated by the elements 
\begin{align*}%\label{generators}
%\{
& y_{2i+1}^2, y_{2i}y_{2i+2}, y_{2i}y_{2i+3}%\}_{
\qquad i = 0, 1, 2.%}
\end{align*}
Moreover, the subgroups $\langle y_{2i+1}^2 \; | \; i =0,1,2 \rangle$, $\langle y_{2i}y_{2i+2} \; | \; i = 0,1,2 \rangle$, $\langle y_{2i}y_{2i+3}  \; | \; i = 0,1,2 \rangle$ are all isomorphic to $F_3$.
\end{theorem}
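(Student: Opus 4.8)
The plan is to prove the two assertions separately but using the same geometric bookkeeping via $\Gamma$-graphs and weights developed in Section 2.

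First, for the \emph{generation} statement, I would argue as follows. By Lemma \ref{pos-gen}, $\vec{F}_3$ is generated by its positive elements, so it suffices to show that every positive element of $\vec{F}_3$ lies in the subgroup $G$ generated by the listed nine elements. A positive element is $(T_+,T)$ where $T$ is the all-right-merge tree; equivalently, it is described by a single tree $T_+$ whose $\Gamma_+$-graph has the alternating colouring $+,-,+,-,\dots$ (after Remark \ref{rem1} and Lemma \ref{lemmashift} one reduces to trees built so that $\Gamma$ is $2$-colourable with that specific colouring). I would proceed by induction on the number of carets of $T_+$: peel off the last caret. The local pictures in the proof of Lemma \ref{lemma-sign} tell us exactly which of the six local configurations can occur at a split that is legal for membership in $\vec{F}_3$ — a split below a right edge, a split below a middle edge, a split below a left edge — and each legal local move corresponds, up to right-shifting by $\varphi_R$ (which by the Remark sends $y_i\mapsto y_{i+2}$, hence preserves $G$ when applied to its generators by Lemma \ref{lemmashift}), to one of the basic elements $y_{2i+1}^2$ (split below a middle edge, producing the ``doubled'' caret as in Example \ref{esey2}/\ref{ese3}), $y_{2i}y_{2i+2}$ (the image of $x_ix_{i+1}\in\vec{F}_2$ under $\iota$, see Remark \ref{rem1}), or $y_{2i}y_{2i+3}$ (the new element of Example \ref{ese3}). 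The content is a finite case check that the product of generators one reads off really reconstructs $T_+$; the shift-invariance under $\varphi_R$ is what lets three representatives ($i=0,1,2$) in each family suffice, because any split occurring further to the right is a $\varphi_R$-image of one occurring among the first few positions and $\varphi_R(y_i)=y_{i+2}$ cycles through the three classes mod something — I expect the periodicity here (why exactly $i=0,1,2$ and not $i=0,1$) to be the delicate point and would verify it by tracking weights mod $2$ as in Proposition \ref{prop1}.

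Second, for the \emph{isomorphism with $F_3$} of each of the three subgroups, the cleanest route is to exhibit, for each family, that the assignment $y_i\mapsto(\text{the }i\text{-th listed element})$ respects the defining relations $y_ny_l=y_ly_{n+k-1}$ with $k=3$, i.e. $y_ny_l=y_ly_{n+2}$ for $l<n$, and then that it is injective. For surjectivity onto the subgroup this is automatic; for injectivity I would use the fact that $F_3$ is a group of PL homeomorphisms and that the given families generate subgroups acting ``independently enough'' — more concretely, one can observe that each of $\langle y_{2i+1}^2\rangle$, $\langle y_{2i}y_{2i+2}\rangle$, $\langle y_{2i}y_{2i+3}\rangle$ is conjugate inside the full group of PL homeomorphisms of $[0,1]$ (or of an interval) to a standard copy of $F_3$: for the middle family this is the classical fact that $x_i^2$ generate a copy of $F$ inside $F$ rescaled, adapted to the ternary setting; for the $y_{2i}y_{2i+2}$ family it follows from $\iota(\langle x_ix_{i+1}\rangle)$ being the copy of $F_3=\vec{F}_2$ established in \cite{GS} (via Remark \ref{rem1}); the $y_{2i}y_{2i+3}$ family is the genuinely new one and I would handle it by writing down the PL homeomorphism explicitly (as in Example \ref{ese3}) and checking it subdivides $[0,1]$ into pieces on which the dynamics is a faithfully rescaled $F_3$-action.

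I expect the main obstacle to be the $y_{2i}y_{2i+3}$ family: verifying the Brown presentation relations $y_ny_l=y_ly_{n+2}$ for its images requires composing the piecewise-linear maps of Example \ref{ese3} and their shifts, and confirming the partition structure is compatible; and proving \emph{faithfulness} (rather than merely that the relations hold) is where one must rule out extra collapses. I would attack faithfulness by finding an explicit common fundamental domain on which the three generators' supports are ``disjointly nested'' in the pattern of the standard generators of $F_3$, so that any reduced word in $F_3$ maps to a PL homeomorphism with a matching (hence nontrivial) derivative jump. The generation statement's periodicity-in-$i$ bound ($i\le 2$) is a secondary but real subtlety, resolved by the weight computations already set up in Lemma \ref{lemma-sign} and Proposition \ref{prop1}.
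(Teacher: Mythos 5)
Your outline of the first half (reduce to positive elements via Lemma \ref{pos-gen}, then induct on the number of carets of the top tree, classify the admissible local split configurations using the colouring/weight bookkeeping of Lemma \ref{lemma-sign}, and cancel each one by multiplying by a member of the three families) is the same strategy as the paper, which carries out exactly this case analysis. But the step you yourself flag as delicate --- why the nine elements with $i=0,1,2$ suffice --- is genuinely gapped as you propose to handle it. Your induction, like the paper's, naturally produces generators $y_{2i+1}^2$, $y_{2i}y_{2i+2}$, $y_{2i}y_{2i+3}$ for \emph{arbitrary} $i$, and passing to finitely many cannot be done by invoking $\varphi_R$: the right shift is an endomorphism, not an inner automorphism, so ``$\varphi_R$ preserves $G$ when applied to its generators'' is circular (e.g.\ $\varphi_R(y_5^2)=y_7^2\in G$ is part of what must be proved), and tracking weights mod $2$ is irrelevant here, since weights detect membership in $\vec F_3$, not generation. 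The correct and elementary mechanism is purely relational: by the Brown presentation, for $i\geq 1$ one has $(y_0y_2)^{-1}(y_{2i}y_{2i+2})(y_0y_2)=y_{2(i+2)}y_{2(i+2)+2}$, $y_1^{-2}\,y_{2i+1}^2\,y_1^2=y_{2(i+2)+1}^2$, and $(y_0y_3)^{-1}(y_{2i}y_{2i+3})(y_0y_3)=y_{2(i+2)}y_{2(i+2)+3}$; conjugation by the $i=0$ member of each family shifts the family index by $2$, which is also why both residues $i=1,2$ (and not just $i=0,1$) are needed.

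For the second assertion your plan diverges from the paper and is incomplete where it diverges. Verifying that $u_i=y_{2i+1}^2$, $v_i=y_{2i}y_{2i+2}$, $w_i=y_{2i}y_{2i+3}$ satisfy the $F_3$ relations is common to both arguments (and is a short computation with $y_ny_l=y_ly_{n+2}$). But injectivity does not require your proposed dynamical analysis of supports, fundamental domains and ``rescaled'' actions --- which is precisely the part you leave open (``rule out extra collapses''), and which is not routine for, say, $\langle y_1^2,y_3^2,y_5^2\rangle$. The standard shortcut, already used in Proposition \ref{propGi} of this paper, is that every proper homomorphic image of $F_3$ is abelian \cite[Theorem 4.13]{Brown}; hence the relation-preserving surjection from $F_3$ onto each (visibly nonabelian) subgroup is automatically injective. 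Without either this observation or a completed faithfulness argument, the isomorphism claim is not established in your proposal.
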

\begin{proof}
The proof of the main statement is divided into two parts. Firstly, we show that $\vec{F}_3$ is generated by the infinite family 
\begin{align}\label{generators}
%\{
& y_{2i+1}^2, y_{2i}y_{2i+2}, y_{2i}y_{2i+3}, %\}_{
\qquad i\geq 0.%}
\end{align}
Secondly, we    prove that   actually the first nine elements corresponding to $i=0,1,2$ are enough.

In the first place we want to show that all the elements in \eqref{generators} belong to $\vec{F}_3$. 
It was already observed in Remark \ref{rem1}
that the %The
 elements $y_{2i}y_{2i+2}$ are in $\vec{F}_3$. % by Remark \ref{rem1}. 
 Thanks to Lemma \ref{lemmashift}, Example \ref{esey2} and Example \ref{ese3} we  know that $y_{2i+1}^2, y_{2i}y_{2i+3}\in\vec{F}_3$.

By Lemma \ref{pos-gen}   it is enough to show that   an arbitrary positive element $g$ in $\vec{F}_3$ can be expressed in terms of products of the above elements. % in the set $\{y_{2i+1}^2, y_{2i}y_{2i+2}\}_{i\geq 0}$.
We give a proof by induction on the number of   leaves of the tree diagram. The first case is when there are $3$ leaves. There is only one tree diagram with three leaves, namely the trivial element, so we may suppose that the claim is true for a tree diagram with $3+2n$ leaves ($n\geq 0$) and prove the claim for $3+2(n+1)$.

In the top tree, one of the following 14 sub-trees must occur.  Actually some of them (namely the cases 2, 3, 4, 5, 8, 9, 10, 11) may be ruled out because the corresponding subgraph of $\Gamma$ does not admit a colouring $+, -, +, -, \ldots$. 
 In the other cases, it is possible to multiply $g$ by the inverse of a suitable element in the aforementioned family  %above 
 which we denote by $h$ and reduce the number of leaves of the tree diagram. Below are shown the elements $h$ to consider and the subgraph in red is the part that will vanish in $gh^{-1}$.
\[
\begin{tikzpicture}[x=.5cm, y=.5cm,
    every edge/.style={
        draw,
      postaction={decorate,
                    decoration={markings}
                   }
        }
]

 \node at (-.75,0) {$\scalebox{1}{\phantom{$\;$}}$};
 \node at (5,0) {$\scalebox{1}{\phantom{$\;$}}$};

\end{tikzpicture}
\qquad
\begin{tikzpicture}[x=.4cm, y=.4cm,
    every edge/.style={
        draw,
      postaction={decorate,
                    decoration={markings}
                   }
        }
]

 \node at (1,0) {$\scalebox{1}{\phantom{$\;$}}$};
 \node at (3,0) {$\scalebox{1}{\phantom{$\;$}}$};

\end{tikzpicture}
\qquad
\begin{tikzpicture}[x=.4cm, y=.4cm,
    every edge/.style={
        draw,
      postaction={decorate,
                    decoration={markings}
                   }
        }
]

 \node at (1,0) {$\scalebox{1}{$\phantom{y_{2i}y_{2i+2}}$}$};
 \node at (1,0) {$\scalebox{1}{$h$}$};

\end{tikzpicture}
\quad
%\]
%\[
\begin{tikzpicture}[x=.5cm, y=.5cm,
    every edge/.style={
        draw,
      postaction={decorate,
                    decoration={markings}
                   }
        }
]

 \node at (-.75,0) {$\scalebox{1}{\phantom{$\;$}}$};
 \node at (5,0) {$\scalebox{1}{\phantom{$\;$}}$};

\end{tikzpicture}
\qquad
\begin{tikzpicture}[x=.4cm, y=.4cm,
    every edge/.style={
        draw,
      postaction={decorate,
                    decoration={markings}
                   }
        }
]

 \node at (1,0) {$\scalebox{1}{\phantom{$\;$}}$};
 \node at (3,0) {$\scalebox{1}{\phantom{$\;$}}$};
 
\end{tikzpicture}
\qquad
\begin{tikzpicture}[x=.4cm, y=.4cm,
    every edge/.style={
        draw,
      postaction={decorate,
                    decoration={markings}
                   }
        }
]
 
 \node at (1,0) {$\scalebox{1}{$h$}$};

 \node at (1,0) {$\scalebox{1}{\phantom{$y_{2i+1}^2$}}$};

\end{tikzpicture}
\]
\[
\begin{tikzpicture}[x=.5cm, y=.5cm,
    every edge/.style={
        draw,
      postaction={decorate,
                    decoration={markings}
                   }
        }
]

\node at (-.75,1) {$\scalebox{.75}{1)}$};

\draw[thick, red] (1,0) -- (3,2)--(5,0);
 \draw[thick, red] (3,0) -- (3,2);
\draw[thick, red] (4,0) -- (4.5,.5)--(4.5,0);

\fill [color=black,opacity=0.2]   (.5,0)--(.5,2)--(3,2)--(1,0); 
 \fill [color=black,opacity=0.2]   (4.5,0)--(4.5,.5)--(5,0); 
\fill [color=black,opacity=0.2]   (3,0)--(3,2)--(4.5,0.5)--(4,0);

\end{tikzpicture}
\qquad
\begin{tikzpicture}[x=.4cm, y=.4cm,
    every edge/.style={
        draw,
      postaction={decorate,
                    decoration={markings}
                   }
        }
]

\draw[thick] (1,0) to[out=90,in=90] (2,0);
\draw[thick] (2,0) to[out=90,in=90] (3,0);

\fill (1,0)  circle[radius=1.5pt];
\fill (2,0)  circle[radius=1.5pt];
\fill (3,0)  circle[radius=1.5pt];
 
 \node at (1,-.5) {$\scalebox{.75}{$\pm$}$};
\node at (2,-.5) {$\scalebox{.75}{$\mp$}$};
\node at (3,-.5) {$\scalebox{.75}{$\pm$}$};

\end{tikzpicture}
\qquad
\begin{tikzpicture}[x=.4cm, y=.4cm,
    every edge/.style={
        draw,
      postaction={decorate,
                    decoration={markings}
                   }
        }
]

 \node at (1,0) {$\scalebox{1}{$y_{2i}y_{2i+2}$}$}; 

\end{tikzpicture}
\quad
%\]
%\[
\begin{tikzpicture}[x=.5cm, y=.5cm,
    every edge/.style={
        draw,
      postaction={decorate,
                    decoration={markings}
                   }
        }
]

\node at (-.75,1) {$\scalebox{.75}{2)}$};

\draw[thick] (1,0) -- (3,2)--(5,0);
 \draw[thick] (3,0) -- (3,2);
\draw[thick] (4,0) -- (4.5,.5)--(4.5,0);

 \fill [color=black,opacity=0.2]   (4,0)--(4.5,.5)--(4.5,0)--(4,0); 
\fill [color=black,opacity=0.2]   (1,0)--(3,2)--(3,0)--(2.5,0)--(2,0); 
\fill [color=black,opacity=0.2]   (3,2)--(5.5,2)--(5.5,0)--(5,0);

\end{tikzpicture}
\qquad
\begin{tikzpicture}[x=.4cm, y=.4cm,
    every edge/.style={
        draw,
      postaction={decorate,
                    decoration={markings}
                   }
        }
]

\draw[thick] (1,0) to[out=90,in=90] (3,0);
\draw[thick] (2,0) to[out=90,in=90] (3,0);

\fill (1,0)  circle[radius=1.5pt];
\fill (2,0)  circle[radius=1.5pt];
\fill (3,0)  circle[radius=1.5pt];
 
 \node at (1,-.5) {$\scalebox{.75}{$\pm$}$};
\node at (2,-.5) {$\scalebox{.75}{$\pm$}$};
\node at (3,-.5) {$\scalebox{.75}{$\mp$}$};

\end{tikzpicture}
\qquad
\begin{tikzpicture}[x=.4cm, y=.4cm,
    every edge/.style={
        draw,
      postaction={decorate,
                    decoration={markings}
                   }
        }
]

 \node at (1,0) {$\scalebox{1}{\phantom{$y_{2i+1}^2$}}$};

\end{tikzpicture}
\]
\[
\begin{tikzpicture}[x=.5cm, y=.5cm,
    every edge/.style={
        draw,
      postaction={decorate,
                    decoration={markings}
                   }
        }
]

\node at (-.75,1) {$\scalebox{.75}{3)}$};

\draw[thick] (1,0) -- (3,2)--(5,0);
 \draw[thick] (3,0) -- (3,2);
\draw[thick] (2.5,0) -- (3,.5)--(3.5,0);

\fill [color=black,opacity=0.2]   (.5,0)--(.5,2)--(3,2)--(1,0); 
\fill [color=black,opacity=0.2]   (2.5,0)--(3,.5)--(3,0); 
\fill [color=black,opacity=0.2]   (3,2)--(5,0)--(3.5,0)--(3,0.5)--(3,2);

\end{tikzpicture}
\qquad
\begin{tikzpicture}[x=.4cm, y=.4cm,
    every edge/.style={
        draw,
      postaction={decorate,
                    decoration={markings}
                   }
        }
]

\draw[thick] (1,0) to[out=90,in=90] (3,0);
\draw[thick] (2,0) to[out=90,in=90] (3,0);

\fill (1,0)  circle[radius=1.5pt];
\fill (2,0)  circle[radius=1.5pt];
\fill (3,0)  circle[radius=1.5pt];
 
 \node at (1,-.5) {$\scalebox{.75}{$\pm$}$};
\node at (2,-.5) {$\scalebox{.75}{$\pm$}$};
\node at (3,-.5) {$\scalebox{.75}{$\mp$}$};

\end{tikzpicture}
\qquad
\begin{tikzpicture}[x=.4cm, y=.4cm,
    every edge/.style={
        draw,
      postaction={decorate,
                    decoration={markings}
                   }
        }
]
 
 \node at (1,0) {$\scalebox{1}{\phantom{$y_{2i}y_{2i+2}$}}$};

\end{tikzpicture}
\quad
\begin{tikzpicture}[x=.5cm, y=.5cm,
    every edge/.style={
        draw,
      postaction={decorate,
                    decoration={markings}
                   }
        }
]

\node at (-.75,1) {$\scalebox{.75}{4)}$};

\draw[thick] (1,0) -- (3,2)--(5,0); 
\draw[thick] (3,0) -- (3,2);
\draw[thick] (2.5,0) -- (3,.5)--(3.5,0);

 \fill [color=black,opacity=0.2]   (3.5,0)--(3,.5)--(3,0)--(3.5,0); 
\fill [color=black,opacity=0.2]   (1,0)--(1.5,.5)--(3,2)--(3,.5)--(2.5,0)--(2,0); 
\fill [color=black,opacity=0.2]   (3,2)--(5.5,2)--(5.5,0)--(5,0);

\end{tikzpicture}
\qquad
\begin{tikzpicture}[x=.4cm, y=.4cm,
    every edge/.style={
        draw,
      postaction={decorate,
                    decoration={markings}
                   }
        }
]

\draw[thick] (1,0) to[out=90,in=90] (3,0);
\draw[thick] (1,0) to[out=90,in=90] (2,0);

\fill (1,0)  circle[radius=1.5pt];
\fill (2,0)  circle[radius=1.5pt];
\fill (3,0)  circle[radius=1.5pt];
 
 \node at (1,-.5) {$\scalebox{.75}{$\pm$}$};
\node at (2,-.5) {$\scalebox{.75}{$\mp$}$};
\node at (3,-.5) {$\scalebox{.75}{$\mp$}$};

\end{tikzpicture}
\qquad
\begin{tikzpicture}[x=.4cm, y=.4cm,
    every edge/.style={
        draw,
      postaction={decorate,
                    decoration={markings}
                   }
        }
]

 \node at (1,0) {$\scalebox{1}{\phantom{$y_{2i+1}^2$}}$};

\end{tikzpicture}
\]
\[
\begin{tikzpicture}[x=.5cm, y=.5cm,
    every edge/.style={
        draw,
      postaction={decorate,
                    decoration={markings}
                   }
        }
]

\node at (-.75,1) {$\scalebox{.75}{5)}$};

\draw[thick] (1,0) -- (3,2)--(5,0);
\draw[thick] (1.5,.5) -- (1.5,0);
\draw[thick] (1.5,.5) -- (2,0);
\draw[thick] (3,0) -- (3,2);
 
\fill [color=black,opacity=0.2]   (.5,0)--(.5,2)--(3,2)--(1,0); 
\fill [color=black,opacity=0.2]   (1.5,0)--(1.5,.5)--(2,0); 
\fill [color=black,opacity=0.2]   (3,2)--(5,0)--(3,0)--(3,0.5)--(3,2);

\end{tikzpicture}
\qquad
\begin{tikzpicture}[x=.4cm, y=.4cm,
    every edge/.style={
        draw,
      postaction={decorate,
                    decoration={markings}
                   }
        }
]

\draw[thick] (1,0) to[out=90,in=90] (3,0);
\draw[thick] (1,0) to[out=90,in=90] (2,0);

\fill (1,0)  circle[radius=1.5pt];
\fill (2,0)  circle[radius=1.5pt];
\fill (3,0)  circle[radius=1.5pt];
 
 \node at (1,-.5) {$\scalebox{.75}{$\pm$}$};
\node at (2,-.5) {$\scalebox{.75}{$\mp$}$};
\node at (3,-.5) {$\scalebox{.75}{$\mp$}$};

\end{tikzpicture}
\qquad
\begin{tikzpicture}[x=.4cm, y=.4cm,
    every edge/.style={
        draw,
      postaction={decorate,
                    decoration={markings}
                   }
        }
]

 \node at (1,0) {$\scalebox{1}{\phantom{$y_{2i}y_{2i+2}$}}$};

\end{tikzpicture}
\quad
\begin{tikzpicture}[x=.5cm, y=.5cm,
    every edge/.style={
        draw,
      postaction={decorate,
                    decoration={markings}
                   }
        }
]

\node at (-.75,1) {$\scalebox{.75}{6)}$};

\draw[thick, red] (1,0) -- (3,2)--(5,0);
\draw[thick, red] (1.5,.5) -- (1.5,0);
\draw[thick, red] (1.5,.5) -- (2,0);
\draw[thick, red] (3,0) -- (3,2);
 
\fill [color=black,opacity=0.2]   (1,0)--(1.5,.5)--(1.5,0)--(1,0); 
 \fill [color=black,opacity=0.2]   (2,0)--(1.5,.5)--(3,2)--(3,0)--(2.5,0)--(2,0); 
\fill [color=black,opacity=0.2]   (3,2)--(5.5,2)--(5.5,0)--(5,0);

\end{tikzpicture}
\qquad
\begin{tikzpicture}[x=.4cm, y=.4cm,
    every edge/.style={
        draw,
      postaction={decorate,
                    decoration={markings}
                   }
        }
]

\draw[thick] (1,0) to[out=90,in=90] (2,0);
\draw[thick] (2,0) to[out=90,in=90] (3,0);

\fill (1,0)  circle[radius=1.5pt];
\fill (2,0)  circle[radius=1.5pt];
\fill (3,0)  circle[radius=1.5pt];
 
 \node at (1,-.5) {$\scalebox{.75}{$\pm$}$};
\node at (2,-.5) {$\scalebox{.75}{$\mp$}$};
\node at (3,-.5) {$\scalebox{.75}{$\pm$}$};

\end{tikzpicture}
\qquad
\begin{tikzpicture}[x=.4cm, y=.4cm,
    every edge/.style={
        draw,
      postaction={decorate,
                    decoration={markings}
                   }
        }
]

 \node at (1,0) {$\scalebox{1}{$y_{2i+1}^2$}$};

\end{tikzpicture}
\]
\[
\begin{tikzpicture}[x=.5cm, y=.5cm,
    every edge/.style={
        draw,
      postaction={decorate,
                    decoration={markings}
                   }
        }
]

\node at (-.75,1) {$\scalebox{.75}{7)}$};

\draw[thick] (1.5,0.5) -- (3,2)--(5,0);
\draw[thick, red] (1.5,.5) -- (1,0);
\draw[thick, red] (1.5,.5) -- (1.5,0);
\draw[thick, red] (1.5,.5) -- (2,0);
\draw[thick] (3,0.5) -- (3,2);
\draw[thick, red] (2.5,0) -- (3,.5)--(3.5,0);
\draw[thick, red] (3,0) -- (3,.5);

\fill [color=black,opacity=0.2]   (.5,0)--(.5,2)--(3,2)--(1,0); 
\fill [color=black,opacity=0.2]   (1.5,0)--(1.5,.5)--(2,0); 
\fill [color=black,opacity=0.2]   (3,2)--(5,0)--(3.5,0)--(3,0.5)--(3,2); 
\fill [color=black,opacity=0.2]   (2.5,0)--(3,.5)--(3,0);

\end{tikzpicture}
\qquad
\begin{tikzpicture}[x=.4cm, y=.4cm,
    every edge/.style={
        draw,
      postaction={decorate,
                    decoration={markings}
                   }
        }
]

\draw[thick] (1,0) to[out=90,in=90] (4,0);
\draw[thick] (1,0) to[out=90,in=90] (2,0); 
\draw[thick] (3,0) to[out=90,in=90] (4,0);

\fill (1,0)  circle[radius=1.5pt];
\fill (2,0)  circle[radius=1.5pt];
\fill (3,0)  circle[radius=1.5pt];
\fill (4,0)  circle[radius=1.5pt];
 
 \node at (1,-.5) {$\scalebox{.75}{$\pm$}$};
\node at (2,-.5) {$\scalebox{.75}{$\mp$}$};
\node at (3,-.5) {$\scalebox{.75}{$\pm$}$};
\node at (4,-.5) {$\scalebox{.75}{$\mp$}$};

\end{tikzpicture}
\quad
\begin{tikzpicture}[x=.4cm, y=.4cm,
    every edge/.style={
        draw,
      postaction={decorate,
                    decoration={markings}
                   }
        }
]

 \node at (1,0) {$\scalebox{1}{$y_{2i}y_{2i+3}$}$};

\end{tikzpicture} 
\quad
\begin{tikzpicture}[x=.5cm, y=.5cm,
    every edge/.style={
        draw,
      postaction={decorate,
                    decoration={markings}
                   }
        }
]

\node at (-.75,1) {$\scalebox{.75}{8)}$};

\draw[thick] (1,0) -- (3,2)--(5,0);
\draw[thick] (1.5,.5) -- (1.5,0);
\draw[thick] (1.5,.5) -- (2,0);
\draw[thick] (3,0) -- (3,2);
\draw[thick] (2.5,0) -- (3,.5)--(3.5,0);

\fill [color=black,opacity=0.2]   (1,0)--(1.5,.5)--(1.5,0)--(1,0); 
\fill [color=black,opacity=0.2]   (3.5,0)--(3,.5)--(3,0)--(3.5,0); 
\fill [color=black,opacity=0.2]   (2,0)--(1.5,.5)--(3,2)--(3,.5)--(2.5,0)--(2,0); 
\fill [color=black,opacity=0.2]   (3,2)--(5.5,2)--(5.5,0)--(5,0);

\end{tikzpicture}
\qquad
\begin{tikzpicture}[x=.4cm, y=.4cm,
    every edge/.style={
        draw,
      postaction={decorate,
                    decoration={markings}
                   }
        }
]

\draw[thick] (2,0) to[out=90,in=90] (4,0);
\draw[thick] (1,0) to[out=90,in=90] (2,0); 
\draw[thick] (2,0) to[out=90,in=90] (3,0);

\fill (1,0)  circle[radius=1.5pt];
\fill (2,0)  circle[radius=1.5pt];
\fill (3,0)  circle[radius=1.5pt];
\fill (4,0)  circle[radius=1.5pt];
 
 \node at (1,-.5) {$\scalebox{.75}{$\pm$}$};
\node at (2,-.5) {$\scalebox{.75}{$\mp$}$};
\node at (3,-.5) {$\scalebox{.75}{$\pm$}$};
\node at (4,-.5) {$\scalebox{.75}{$\pm$}$};

\end{tikzpicture}
\qquad
\begin{tikzpicture}[x=.4cm, y=.4cm,
    every edge/.style={
        draw,
      postaction={decorate,
                    decoration={markings}
                   }
        }
]

 \node at (1,0) {$\scalebox{1}{\phantom{$y_{2i+1}^2$}}$};

\end{tikzpicture}
\]
\[
\begin{tikzpicture}[x=.5cm, y=.5cm,
    every edge/.style={
        draw,
      postaction={decorate,
                    decoration={markings}
                   }
        }
]

\node at (-.75,1) {$\scalebox{.75}{9)}$};

\draw[thick] (1,0) -- (3,2)--(5,0);
\draw[thick] (1.5,.5) -- (1.5,0);
\draw[thick] (1.5,.5) -- (2,0);
\draw[thick] (3,0) -- (3,2);
\draw[thick] (4,0) -- (4.5,.5)--(4.5,0);

\fill [color=black,opacity=0.2]   (.5,0)--(.5,2)--(3,2)--(1,0); 
\fill [color=black,opacity=0.2]   (1.5,0)--(1.5,.5)--(2,0); 
\fill [color=black,opacity=0.2]   (4.5,0)--(4.5,.5)--(5,0); 
\fill [color=black,opacity=0.2]   (3,2)--(4.5,.5)--(4,0)--(3,0)--(3,2);

\end{tikzpicture}
\qquad
\begin{tikzpicture}[x=.4cm, y=.4cm,
    every edge/.style={
        draw,
      postaction={decorate,
                    decoration={markings}
                   }
        }
]

\draw[thick] (1,0) to[out=90,in=90] (3,0);
\draw[thick] (1,0) to[out=90,in=90] (2,0); 
\draw[thick] (3,0) to[out=90,in=90] (4,0);

\fill (1,0)  circle[radius=1.5pt];
\fill (2,0)  circle[radius=1.5pt];
\fill (3,0)  circle[radius=1.5pt];
\fill (4,0)  circle[radius=1.5pt];
 
 \node at (1,-.5) {$\scalebox{.75}{$\pm$}$};
\node at (2,-.5) {$\scalebox{.75}{$\mp$}$};
\node at (3,-.5) {$\scalebox{.75}{$\mp$}$};
\node at (4,-.5) {$\scalebox{.75}{$\pm$}$};

\end{tikzpicture}
%\]
%\[
\quad
\begin{tikzpicture}[x=.4cm, y=.4cm,
    every edge/.style={
        draw,
      postaction={decorate,
                    decoration={markings}
                   }
        }
] 

 \node at (1,0) {$\scalebox{1}{\phantom{$y_{2i}y_{2i+2}$}}$};

\end{tikzpicture}
\quad
\begin{tikzpicture}[x=.5cm, y=.5cm,
    every edge/.style={
        draw,
      postaction={decorate,
                    decoration={markings}
                   }
        }
]

\node at (-.75,1) {$\scalebox{.75}{10)}$};

\draw[thick] (1,0) -- (3,2)--(5,0);
\draw[thick] (1.5,.5) -- (1.5,0);
\draw[thick] (1.5,.5) -- (2,0);
\draw[thick] (3,0) -- (3,2);
\draw[thick] (4,0) -- (4.5,.5)--(4.5,0);

\fill [color=black,opacity=0.2]   (1,0)--(1.5,.5)--(1.5,0)--(1,0); 
\fill [color=black,opacity=0.2]   (4,0)--(4.5,.5)--(4.5,0)--(4,0); 
\fill [color=black,opacity=0.2]   (2,0)--(1.5,.5)--(3,2)--(3,0)--(2.5,0)--(2,0); 
\fill [color=black,opacity=0.2]   (3,2)--(5.5,2)--(5.5,0)--(5,0);

\end{tikzpicture}
\qquad
\begin{tikzpicture}[x=.4cm, y=.4cm,
    every edge/.style={
        draw,
      postaction={decorate,
                    decoration={markings}
                   }
        }
]

\draw[thick] (1,0) to[out=90,in=90] (2,0);
\draw[thick] (2,0) to[out=90,in=90] (4,0); 
\draw[thick] (3,0) to[out=90,in=90] (4,0);

\fill (1,0)  circle[radius=1.5pt];
\fill (2,0)  circle[radius=1.5pt];
\fill (3,0)  circle[radius=1.5pt];
\fill (4,0)  circle[radius=1.5pt];
 
 \node at (1,-.5) {$\scalebox{.75}{$\pm$}$};
\node at (2,-.5) {$\scalebox{.75}{$\mp$}$};
\node at (3,-.5) {$\scalebox{.75}{$\mp$}$};
\node at (4,-.5) {$\scalebox{.75}{$\pm$}$};

\end{tikzpicture}
\qquad
\begin{tikzpicture}[x=.4cm, y=.4cm,
    every edge/.style={
        draw,
      postaction={decorate,
                    decoration={markings}
                   }
        }
]

 \node at (1,0) {$\scalebox{1}{\phantom{$y_{2i+1}^2$}}$};

\end{tikzpicture}
\]
\[
\begin{tikzpicture}[x=.5cm, y=.5cm,
    every edge/.style={
        draw,
      postaction={decorate,
                    decoration={markings}
                   }
        }
]

\node at (-.75,1) {$\scalebox{.75}{11)}$};

\draw[thick] (1,0) -- (3,2)--(5,0);
\draw[thick] (2.5,0) -- (3,.5)--(3.5,0);
\draw[thick] (3,0) -- (3,2);
\draw[thick] (4,0) -- (4.5,.5)--(4.5,0);

\fill [color=black,opacity=0.2]   (.5,0)--(.5,2)--(3,2)--(1,0); 
\fill [color=black,opacity=0.2]   (2.5,0)--(3,.5)--(3,0); 
\fill [color=black,opacity=0.2]   (4.5,0)--(4.5,.5)--(5,0); 
\fill [color=black,opacity=0.2]   (3.5,0)--(3,.5)--(3,2)--(4.5,0.5)--(4,0);

\end{tikzpicture}
\qquad
\begin{tikzpicture}[x=.4cm, y=.4cm,
    every edge/.style={
        draw,
      postaction={decorate,
                    decoration={markings}
                   }
        }
]

\draw[thick] (1,0) to[out=90,in=90] (3,0);
\draw[thick] (2,0) to[out=90,in=90] (3,0); 
\draw[thick] (3,0) to[out=90,in=90] (4,0);

\fill (1,0)  circle[radius=1.5pt];
\fill (2,0)  circle[radius=1.5pt];
\fill (3,0)  circle[radius=1.5pt];
\fill (4,0)  circle[radius=1.5pt];
 
 \node at (1,-.5) {$\scalebox{.75}{$\pm$}$};
\node at (2,-.5) {$\scalebox{.75}{$\pm$}$};
\node at (3,-.5) {$\scalebox{.75}{$\mp$}$};
\node at (4,-.5) {$\scalebox{.75}{$\pm$}$};

\end{tikzpicture}
%\]
%\[
%\quad
\begin{tikzpicture}[x=.4cm, y=.4cm,
    every edge/.style={
        draw,
      postaction={decorate,
                    decoration={markings}
                   }
        }
]

 \node at (1,0) {$\scalebox{1}{\phantom{$y_{2i}y_{2i+2}$}}$};

\end{tikzpicture}
%\quad
\begin{tikzpicture}[x=.5cm, y=.5cm,
    every edge/.style={
        draw,
      postaction={decorate,
                    decoration={markings}
                   }
        }
]

\node at (-.75,1) {$\scalebox{.75}{12)}$};

\draw[thick] (1,0) -- (3,2)--(4.5,0.5);
\draw[thick, red] (2.5,0) -- (3,.5)--(3.5,0);
\draw[thick] (3,0.5) -- (3,2);
\draw[thick, red] (3,0) -- (3,.5);
\draw[thick, red] (4,0) -- (4.5,.5)--(4.5,0);
\draw[thick, red] (4.5,.5)--(5,0);

\fill [color=black,opacity=0.2]   (3.5,0)--(3,.5)--(3,0); 
\fill [color=black,opacity=0.2]   (4,0)--(4.5,.5)--(4.5,0)--(4,0); 
\fill [color=black,opacity=0.2]   (1,0)--(3,2)--(3,.5)--(2.5,0)--(1,0); 
\fill [color=black,opacity=0.2]   (3,2)--(5.5,2)--(5.5,0)--(5,0);

\end{tikzpicture}
\qquad
\begin{tikzpicture}[x=.4cm, y=.4cm,
    every edge/.style={
        draw,
      postaction={decorate,
                    decoration={markings}
                   }
        }
]

\draw[thick] (1,0) to[out=90,in=90] (4,0);
\draw[thick] (1,0) to[out=90,in=90] (2,0); 
\draw[thick] (3,0) to[out=90,in=90] (4,0);

\fill (1,0)  circle[radius=1.5pt];
\fill (2,0)  circle[radius=1.5pt];
\fill (3,0)  circle[radius=1.5pt];
\fill (4,0)  circle[radius=1.5pt];
 
 \node at (1,-.5) {$\scalebox{.75}{$\pm$}$};
\node at (2,-.5) {$\scalebox{.75}{$\mp$}$};
\node at (3,-.5) {$\scalebox{.75}{$\pm$}$};
\node at (4,-.5) {$\scalebox{.75}{$\mp$}$};

\end{tikzpicture}
%\qquad
\begin{tikzpicture}[x=.4cm, y=.4cm,
    every edge/.style={
        draw,
      postaction={decorate,
                    decoration={markings}
                   }
        }
]

 \node at (1,0) {$\scalebox{1}{$y_{2i}y_{2i+3}$}$};

\end{tikzpicture}
\]
\[
\begin{tikzpicture}[x=.5cm, y=.5cm,
    every edge/.style={
        draw,
      postaction={decorate,
                    decoration={markings}
                   }
        }
]

\node at (-.75,1) {$\scalebox{.75}{13)}$};

\draw[thick] (1.5,.5) -- (3,2)--(5,0);
 \draw[thick] (3,0.5) -- (3,2);
\draw[thick] (4,0) -- (4.5,.5)--(4.5,0);
 
\draw[thick, red] (2.5,0) -- (3,0.5)--(3.5,0);
\draw[thick, red] (3,0)--(3,.5);

\draw[thick, red] (1.5,.5) -- (1,0);
\draw[thick, red] (1.5,.5) -- (1.5,0);
\draw[thick, red] (1.5,.5) -- (2,0);

\fill [color=black,opacity=0.2]   (.5,0)--(.5,2)--(3,2)--(1,0); 
\fill [color=black,opacity=0.2]   (2.5,0)--(3,.5)--(3,0); 
\fill [color=black,opacity=0.2]   (1.5,0)--(1.5,.5)--(2,0); 
\fill [color=black,opacity=0.2]   (4.5,0)--(4.5,.5)--(5,0); 
\fill [color=black,opacity=0.2]   (3.5,0)--(3,.5)--(3,2)--(4.5,0.5)--(4,0); 

\node at (5.5,1) {$\scalebox{.75}{}$};

\end{tikzpicture}
%\qquad
\begin{tikzpicture}[x=.4cm, y=.4cm,
    every edge/.style={
        draw,
      postaction={decorate,
                    decoration={markings}
                   }
        }
]

\draw[thick] (1,0) to[out=90,in=90] (2,0);
\draw[thick] (1,0) to[out=90,in=90] (4,0);
\draw[thick] (3,0) to[out=90,in=90] (4,0); 
\draw[thick] (4,0) to[out=90,in=90] (5,0);

\fill (1,0)  circle[radius=1.5pt];
\fill (2,0)  circle[radius=1.5pt];
\fill (3,0)  circle[radius=1.5pt];
\fill (4,0)  circle[radius=1.5pt];
\fill (5,0)  circle[radius=1.5pt];
 
 \node at (1,-.5) {$\scalebox{.75}{$\pm$}$};
\node at (2,-.5) {$\scalebox{.75}{$\mp$}$};
\node at (3,-.5) {$\scalebox{.75}{$\pm$}$};
\node at (4,-.5) {$\scalebox{.75}{$\mp$}$};
\node at (5,-.5) {$\scalebox{.75}{$\pm$}$};

\end{tikzpicture}
%\qquad
\begin{tikzpicture}[x=.4cm, y=.4cm,
    every edge/.style={
        draw,
      postaction={decorate,
                    decoration={markings}
                   }
        }
]

 \node at (1,0) {$\scalebox{1}{$y_{2i}y_{2i+3}$}$};

\end{tikzpicture}
%\]
%\[
\begin{tikzpicture}[x=.5cm, y=.5cm,
    every edge/.style={
        draw,
      postaction={decorate,
                    decoration={markings}
                   }
        }
]

\node at (-.75,1) {$\scalebox{.75}{14)}$};

\draw[thick] (1.5,.5) -- (3,2)--(5,0);
 \draw[thick] (3,0.5) -- (3,2);
\draw[thick] (4,0) -- (4.5,.5)--(4.5,0);
 
\draw[thick, red] (2.5,0) -- (3,0.5)--(3.5,0);
\draw[thick, red] (3,0)--(3,.5);

\draw[thick, red] (1.5,.5) -- (1,0);
\draw[thick, red] (1.5,.5) -- (1.5,0);
\draw[thick, red] (1.5,.5) -- (2,0);

\fill [color=black,opacity=0.2]   (1,0)--(1.5,.5)--(1.5,0); 
\fill [color=black,opacity=0.2]   (3.5,0)--(3,.5)--(3,0); 
\fill [color=black,opacity=0.2]   (4,0)--(4.5,.5)--(4.5,0)--(4,0); 
\fill [color=black,opacity=0.2]   (2,0)--(1.5,0.5)--(3,2)--(3,.5)--(2.5,0)--(1,0); 
\fill [color=black,opacity=0.2]   (3,2)--(5.5,2)--(5.5,0)--(5,0);

\end{tikzpicture}
\qquad
\begin{tikzpicture}[x=.4cm, y=.4cm,
    every edge/.style={
        draw,
      postaction={decorate,
                    decoration={markings}
                   }
        }
]

\draw[thick] (1,0) to[out=90,in=90] (2,0);
\draw[thick] (2,0) to[out=90,in=90] (5,0);
\draw[thick] (2,0) to[out=90,in=90] (3,0);
\draw[thick] (4,0) to[out=90,in=90] (5,0);

\fill (1,0)  circle[radius=1.5pt];
\fill (2,0)  circle[radius=1.5pt];
\fill (3,0)  circle[radius=1.5pt];
\fill (4,0)  circle[radius=1.5pt]; 
\fill (5,0)  circle[radius=1.5pt];
 
 \node at (1,-.5) {$\scalebox{.75}{$\pm$}$};
\node at (2,-.5) {$\scalebox{.75}{$\mp$}$};
\node at (3,-.5) {$\scalebox{.75}{$\pm$}$};
\node at (4,-.5) {$\scalebox{.75}{$\mp$}$};
\node at (5,-.5) {$\scalebox{.75}{$\pm$}$};

\end{tikzpicture}
%\qquad
\begin{tikzpicture}[x=.4cm, y=.4cm,
    every edge/.style={
        draw,
      postaction={decorate,
                    decoration={markings}
                   }
        }
]

 \node at (1,0) {$\scalebox{1}{$y_{2i}y_{2i+3}$}$};

\end{tikzpicture}
\] 

We now prove that $\vec{F}_3$ is finitely generated thanks to the defining relations of $F_3$.  
Indeed, for $i\geq 1$ it holds 
$(y_{0}y_{2})^{-1}(y_{2i}y_{2i+2})(y_{0}y_{2})=y_{2(i+2)}y_{2(i+2)+2}$,
$y_{1}^{-2}y_{2i+1}^2y_{1}^2=y_{2(i+2)+1}^2$, $(y_{0}y_{3})^{-1}(y_{2i}y_{2i+3})(y_{0}y_{3})=y_{2(i+2)}y_{2(i+2)+3}$ and thus we see that the elements in \eqref{generators} corresponding to $i=0, 1, 2$ generate the whole subgroup.

We now take care of the last claim of this theorem. Since every proper homomorphic image of $F_3$ is abelian \cite[Theorem 4.13]{Brown},
it is enough to check that the elements $u_i:=y_{2i+1}^2$, $v_i:=y_{2i}y_{2i+2}$, $w_i:=y_{2i}y_{2i+3}$ satisfy the generating relations of $F_3$.
This can be done  easily, but 
we omit tedious 
computations. % which we omit.
\end{proof}

\section{The oriented subgroup $\vec{F}_3$ in $F_3$} 
The aim of this section is to show that the oriented subgroup $\vec {F}_3$ gives rise to a maximal subgroup of infinite index in $F_3$ isomorphic to $\vec{F}_3$ that does not fix any point in $(0,1)$. The situation is therefore similar to that of $\vec F \leq \vec{F}_2 \leq \ F_2 = F$ studied by Golan and Sapir in \cite{GS2}.
 More precisely, we show that $\vec{F}_3$ is maximal in a subgroup of index $2$ in $F_3$ that is isomorphic to $F_3$. 
 
The said subgroup of index $2$ can be defined in any $F_k, k\geq 2$, as follows:

$$
G_k:=\{f\in F_k\; | \; \log_kf'(1)\in 2\IZ\}
$$

\begin{proposition}\label{propGi}
The subgroup 
$G_{k}$ is generated by $w_i:=y_iy_{k}$ for $i=0, \ldots , k-1$
 and is
 isomorphic with the Brown-Thompson group $F_{k}$. The subgroup $G_{k}$ consists of the elements in $F_{k}$ whose normal form has even length, %. In particular,
 and %the index of $G_3$ 
 its index in $F_{k}$ is $2$. 
\end{proposition}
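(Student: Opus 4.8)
The plan is to hang everything on the homomorphism $\chi\colon F_k\to\IZ$, $f\mapsto\log_k f'(1)$. First I would record that $\chi$ really is a homomorphism: the slope at the endpoint $1$ of an element of $F_k$ is a power of $k$, and since every $f\in F_k$ fixes $1$ it is multiplicative under composition; moreover $\chi$ is onto, as $\log_k y_0'(1)=-1$. Hence $G_k=\chi^{-1}(2\IZ)$ is a normal subgroup of index $[\IZ:2\IZ]=2$, and, being the kernel of the composite $F_k\xrightarrow{\chi}\IZ\to\IZ/2\IZ$, it contains $[F_k,F_k]$; in particular $G_k$ is non-abelian, a fact I will use at the very end.

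For the normal-form description I would note that every relator $y_ny_ly_{n+k-1}^{-1}y_l^{-1}$ (for $l<n$) of the standard presentation of $F_k$ has word length $4$, so the assignment $y_i^{\pm1}\mapsto1$ extends to a homomorphism $\varepsilon\colon F_k\to\IZ/2\IZ$, and $\varepsilon(f)$ is the parity of the length of the normal form of $f$. Since $\log_k y_i'(1)=\pm1$ for every $i$ — this is visible from the reduced diagram when $i<k$, whose two trees differ by a single caret along their rightmost branches, and then holds for all $i$ because $\log_k y_{n+k-1}'(1)=\log_k y_n'(1)$ by the defining relations — the homomorphism $\chi\bmod 2$ also sends every $y_i$ to $1$, so $\varepsilon=\chi\bmod 2$. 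Therefore $G_k=\ker\varepsilon$ is precisely the set of elements whose normal form has even length.

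Next comes the generating set. Each $w_i=y_iy_k$ has length $2$ and hence lies in $G_k$; the substance is the reverse inclusion $G_k\subseteq\langle w_0,\dots,w_{k-1}\rangle$. I would prove it by induction on the number of leaves of the tree diagram of an element $g\in G_k$, in the spirit of the proof of the theorem in Section~3: the parity constraint $g\in G_k$ excludes those top-subtree configurations that would otherwise obstruct the reduction, and in each remaining configuration one multiplies $g$ by a suitable $w_i^{\pm1}$ so as to strictly decrease the number of leaves. Equivalently, one can work with the even-length normal form of $g$ directly, using $w_iw_j^{-1}=y_iy_j^{-1}$ and the relations $y_ky_m=y_my_{m+k-1}$ to rewrite its two-letter subwords in terms of the $w_i$. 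I expect this combinatorial bookkeeping to be the main obstacle, although conceptually it is routine.

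Finally, for $G_k\cong F_k$, I would verify that $w_0,\dots,w_{k-1}$ satisfy the relations of Brown's finite presentation of $F_k$ — a direct but tedious computation, similar to the one omitted at the end of the proof in Section~3 — which together with the generation result yields a surjection $\phi\colon F_k\twoheadrightarrow G_k$. If $\ker\phi$ were nontrivial, then $G_k$ would be a proper homomorphic image of $F_k$, hence abelian by \cite[Theorem 4.13]{Brown}, contradicting $[F_k,F_k]\le G_k$; so $\phi$ is an isomorphism. The assertion $[F_k:G_k]=2$ was already obtained in the first step, which completes the proof.
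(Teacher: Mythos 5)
Your skeleton matches the paper's: identify $G_k$ with the set of even-length elements (hence index $2$), show that $w_0,\dots,w_{k-1}$ generate $G_k$ and satisfy the relations of $F_k$, and deduce injectivity of the induced surjection $F_k\to G_k$ from the fact that every proper quotient of $F_k$ is abelian (the strategy of \cite[Lemma 4.7]{GS}, which is exactly what the paper follows). Your treatment of the first step, via the homomorphism $\chi(f)=\log_k f'(1)$ and the length-parity homomorphism $\varepsilon$, which agree mod $2$ on the generators, is correct and if anything more explicit than the paper's one-line observation.

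The genuine gap is the inclusion $G_k\subseteq\langle w_0,\dots,w_{k-1}\rangle$, which you defer as ``combinatorial bookkeeping'': this is precisely the bulk of the paper's proof, not a routine finish. Since $G_k$ is generated by all two-letter words $y_a^{\pm1}y_b^{\pm1}$ with $a,b\geq 0$ arbitrary, one must place all of these in $L:=\langle w_0,\dots,w_{k-1}\rangle$, and the paper needs a six-step chain of non-local identities to do so: for instance $w_k:=w_0^{-1}w_1w_0=y_ky_{3k-2}$ and $y_n^2=w_n^2w_k^{-1}$, the identity $w_0\,y_{n+k-1}y_{n+k}=(y_1y_n)(y_{n+1}y_0)$ expanded into a telescoping product of elements already known to lie in $L$, and index shifts obtained by conjugating with $y_0^2$ and $w_0$. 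Your first suggested route --- induction on the number of leaves ``in the spirit of Section 3'', with the parity constraint excluding bad configurations --- does not transfer: membership in $G_k$ is a single global parity (of the normal-form length, equivalently of the difference of the rightmost branch lengths of the two trees), not a local condition at each split like $2$-colourability of $\Gamma$, so no subtree configuration is excluded; moreover the finitely many $w_i$ only affect the diagram near a bounded set of leaves, while the removable caret of a positive element of $G_k$ may sit arbitrarily far to the right --- in Section 3 the paper starts from an \emph{infinite} generating family for exactly this reason and only afterwards cuts down to finitely many generators by conjugation. Your second route (rewriting two-letter subwords) is essentially the paper's argument, but its content is exactly the list of identities above, which your proposal does not supply. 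A smaller point: your final contradiction needs $G_k$ to be non-abelian, and $[F_k,F_k]\le G_k$ yields this only if you also know that $F_k$ is not metabelian; it is quicker to note that $w_0$ and $w_1$ do not commute, since $w_1w_0=w_0w_k$ with $w_k=y_ky_{3k-2}\neq w_1$ by uniqueness of normal forms.
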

\begin{proof}
First of all, we observe that the elements in $F_{k}$ whose normal form has even length form a subgroup $K$ whose index in $F_{k}$ is $2$. This subgroup coincides with $G_{k}$ because $K\leq G_{k}  < F_{k}$ and the index $[F_{k}:K]=2$.

We now follow the same strategy as in \cite[Lemma 4.7]{GS}.
Since every proper homomorphic image of $F_{k}$ is abelian \cite[Theorem 4.13]{Brown},
it is enough to exhibit a family of elements in $G_{k}$ that generates the group, satisfies the generating relations of $F_{k}$ and do not commute.
%%
% i= k-1
% 3i+1=3k-3+1=3k-2
We set $w_i:=y_iy_{k}$ for $i=0, \ldots , k-1$, $w_{n}:=w_0^{-1}w_{n-k+1}w_0$ for all $n\geq k$.
Since $w_{k}=(y_0y_{k})^{-1}y_1y_{k}y_0y_{k}=y_{k}y_{3k-2}$, we have
\begin{align*}
w_1w_0& =y_1y_{k}y_0y_{k}=y_0y_{k}y_{2k-1}y_{k}=y_0y_{k}y_{k}y_{3k-2}\\
& = w_0 w_{k}
\end{align*}
For  $k-1 \geq n>i$ and $n\geq 2$, we  have
\begin{align*}
w_nw_i&=y_{n}y_{k}y_{i}y_{k} =y_{i} y_{n+k-1}y_{2k-1}y_{k} =y_{i} y_{k} y_{n+2k-2}y_{3k-2}\\
& =w_i w_{n+k-1}
\end{align*}
Now the general case follows by induction on $n+i$
\begin{align*}
w_nw_i&=w_0^{-1}w_{n-k+1}w_0w_0^{-1}w_{i-k+1}w_0=w_0^{-1}w_{n-k+1}w_{i-k+1}w_0=w_0^{-1}w_{i-k+1}w_{n}w_0\\
&=w_0^{-1}w_{i-k+1}w_0w_0^{-1}w_{n}w_0=w_iw_{n+k-1}\; .
\end{align*}
Denote by $L$ the subgroup generated by $\{w_i\}_{i\geq 0}$. Clearly, $L$ is a subgroup of $G_{k}$. We want to show that they are actually equal.

The first step is to show that $y_n^2\in L$ for all $n= 0, \ldots , k$.
Firstly, suppose that $n\leq k-1$. We have
\begin{align*}
w_nw_n & = y_ny_{k}y_n y_{k}= y_n^2 y_{2k-1}y_{k}= y_n^2 y_{k}y_{3k-2}=y_n^2 w_{k}\in L\; .
\end{align*}
and thus $y_n^2\in L$.

In the second step we show that $y_ny_{n+1}\in L$ for all $n=0, \ldots , k-1$. When $n=k-1$, we have $y_{k-1}y_{k}=w_{k-1}$, so we may suppose $n\leq k-2$. 
In this case, we have
\begin{align*}
y_ny_{n+1} &= y_ny_{k}(y_{n+1}y_{k})^{-1} y_{n+1}^2= w_n w_{n+1}^{-1} y_{n+1}^2\in L\; .
\end{align*}
We note that since $y_0^2 y_{k}^2=y_0y_1 y_0y_{k}=y_0y_1w_0 \in L$, we have $y_{k}^2\in L$.
As $y_n y_{n-1}=(y_{n-1}^{-2} y_{n-1} y_{n} y_{n}^{-2})^{-1}$, we have $y_ny_{n-1}\in L$ for all $n=1, \ldots , k$.

In the third step we show that $y_{n+k-1}y_{n+k}\in L$ for all $n=0, \ldots , k-1$.
Suppose that $n\leq k-2$. We have
\begin{align*}
w_0y_{n+k-1}y_{n+k}&= y_{0}y_{k} y_{n+k-1}y_{n+k}= (y_1y_n)(y_{n+1}y_0)\\
&= (y_1y_2 y_2^{-2} y_2y_3 y_3^{-2} \cdots y_{n-1}^{-2} y_{n-1} y_n)(y_{n+1}y_n y_n^{-2} \cdots y_1^{-2}y_1y_0)\in L\; .
\end{align*}
and so $y_{n+k-1}y_{n+k}\in L$. When $n=k-1$, the claim follows from the following equality and the fact that $y_0y_{k-1}, y_{k}y_0\in L$
\begin{align*}
y_0^2 y_{2k-2}y_{2k-1}&=(y_0y_{k-1})(y_{k} y_0)\; .
\end{align*}

In the fourth step we show that $y_{n+k}^2\in L$ for all $n=1, \ldots , k-2$.
We have
\begin{align*}
w_0y_{n+k}^2&= y_{0}y_{k} y_{n+k}^2= (y_1y_{n+1})(y_{n+1}y_0)\in L\; .
\end{align*}
and so $y_{n+k}^2\in L$.

Now, since  $y_m y_{m+1}y_0^2=y_0^2 y_{m+2k-2}y_{m+1+2k-2}$ for all $m\in \IN$, we have that $y_i y_{i+1}\in L$ for all $i\in \IN_0$.
Similarly, $y_n^2y_0^2=y_0^2 y_{n+2k-2}^2$ for all $n\geq 1$, we have that $y_i^2\in L$ for all $i\in\IN_0$.

The fifth step is to show that $y_iy_j\in L$ for $i< j$. We proceed by induction on $j-i$. If $j-i=1$ the claim was proven in the previous step. Otherwise, by induction we get
$$
y_iy_j=(y_iy_{i+1})(y_{i+1}^{-1}y_j)\in L
$$
$$
y_{i+1}^{-1}y_j=y_{i+1}^{-2}(y_{i+1}y_j)
$$
where we used that $y_{i+1}^{-2}\in L$ and $y_{i+1}y_j\in L$ by induction.

In the sixth step we show that $y_i^{\pm 1} y_j^{\pm 1}\in L$ for $i, j$. This is true since
$$
y_i^{\pm 1} y_j^{\pm 1}=(y_i^{-2\epsilon}) (y_i y_j) (y_j^{-2\delta})
$$
for suitable $\epsilon, \delta\in \{0, 1\}$. 
After all these steps, now it is clear that $L$ is equal to $G_{k}$.
\end{proof}

We now turn our attention more specifically to $\vec{F}_3\in F_3$ and will return to the general case $F_k, k\geq 2$ in the next section.
Recall from Section 1 the embedding $\iota: F_2\to F_3$. Then $\iota(G_2) = G_3 \cap \iota(F_2)$. In what follows we will keep the notation $\{y_i\}$ for the generators of $F_3$ and when necessary use the notation $\{x_i\}$ for the generators of $F_2=F$. We have $\iota(x_i)=y_{2i}, \forall i\geq 0$. 
\begin{lemma}\label{lemmay02}
It holds $\langle y_0^2, \vec{F}_3\rangle = G_3$.
\end{lemma}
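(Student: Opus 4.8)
The plan is to establish the two inclusions $\langle y_0^2,\vec{F}_3\rangle\subseteq G_3$ and $G_3\subseteq\langle y_0^2,\vec{F}_3\rangle$ separately; the first is immediate, and the second is where all the work lies. Throughout I would write $H:=\langle y_0^2,\vec{F}_3\rangle$.

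For the inclusion $H\subseteq G_3$ I would invoke the description of $G_3$ from Proposition \ref{propGi}: it is exactly the set of elements of $F_3$ whose normal form has even length, i.e.\ the kernel of the homomorphism $F_3\to\IZ/2\IZ$ sending every $y_i$ to $1$ (this is well defined since each defining relation $y_ny_l=y_ly_{n+2}$ equates two words of length $2$). Each generator $y_{2i+1}^2$, $y_{2i}y_{2i+2}$, $y_{2i}y_{2i+3}$ of $\vec{F}_3$, and also $y_0^2$, is a word of length $2$ in the $y_j$'s, hence lies in $G_3$; therefore $H\subseteq G_3$.

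For the reverse inclusion I would use that, by Proposition \ref{propGi}, $G_3=\langle y_0y_3,\ y_1y_3,\ y_2y_3\rangle$, so it suffices to put each $w_i:=y_iy_3$ into $H$. The first is free: $w_0=y_0y_3\in\vec{F}_3$ by Example \ref{ese3}, whence $y_0^{-1}y_3=y_0^{-2}(y_0y_3)\in H$. Using the relation $y_1y_0=y_0y_3$ one then gets $w_1=y_1y_3=(y_1y_0)(y_0^{-1}y_3)=(y_0y_3)(y_0^{-1}y_3)\in H$. For $w_2$, the relation $y_2y_0=y_0y_4$ gives $w_2=(y_0y_4)(y_0^{-1}y_3)$, so it is enough to show $y_0y_4\in H$; and since $y_0y_2,y_2y_4\in\vec{F}_3$ and $y_0y_4=(y_0y_2)\,y_2^{-2}\,(y_2y_4)$, it is enough to show $y_2^2\in H$.

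Showing $y_2^2\in H$ is the heart of the matter and, I expect, the main obstacle: it requires playing the relations $y_ny_l=y_ly_{n+2}$ against several oriented elements at once, with no single relation doing the job. The route I would follow is: from $y_0^{-1}y_2=y_0^{-2}(y_0y_2)\in H$ and $y_0^{-1}y_3\in H$, a short relation computation gives $(y_0^{-1}y_3)(y_0^{-1}y_2)^{-1}=y_4y_7y_4^{-2}$, so since $y_4y_7\in\vec{F}_3$ one extracts $y_4^2\in H$, and then $y_8^2=y_0^{-2}y_4^2y_0^2\in H$; next, $(y_2y_4)(y_4y_6)^{-1}=y_2y_4^2y_8^{-1}y_4^{-2}$ (with $y_2y_4,y_4y_6\in\vec{F}_3$) yields $y_2y_4^2y_8=(y_2y_4^2y_8^{-1}y_4^{-2})\,y_4^2\,y_8^2\in H$, hence $y_4y_8=(y_2y_4)^{-1}(y_2y_4^2y_8)\in H$; and finally, since $(y_2y_4)^2=y_2^2y_4y_8$, we conclude $y_2^2=(y_2y_4)^2(y_4y_8)^{-1}\in H$. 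Unwinding, $y_0y_4\in H$ and therefore $w_2=y_2y_3\in H$, so $G_3\subseteq H$, finishing the proof. All the displayed identities are routine consequences of the defining relations of $F_3$, and I would either leave their verification to the reader or suppress it, as is done at the end of the previous section.
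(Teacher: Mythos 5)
Your proof is correct, and its skeleton agrees with the paper's: both reduce the problem via Proposition \ref{propGi} to showing that $w_0=y_0y_3$, $w_1=y_1y_3$, $w_2=y_2y_3$ lie in $H=\langle y_0^2,\vec{F}_3\rangle$, both get $w_0$ from Example \ref{ese3} and $w_1$ from the relation $y_1y_0=y_0y_3$ together with $y_0^{-2}$. The divergence is in the treatment of $w_2$. The paper imports the Golan--Sapir result $\langle x_0^2,\vec{F}_2\rangle=G_2$ (Lemma 3.3 of \cite{GS2}) through the embedding $\iota$, which immediately gives $y_2y_0=\iota(x_1x_0)\in H$ and hence $y_2y_3=y_2y_0\,y_0^{-2}\,y_0y_3\in H$ in one line. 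You instead stay entirely inside $F_3$ and manufacture $y_2^2\in H$ from the oriented elements $y_0y_2,\,y_2y_4,\,y_4y_6,\,y_4y_7$ and the defining relations; I checked your identities and they hold, e.g.\ $(y_0^{-1}y_3)(y_0^{-1}y_2)^{-1}=y_5y_4^{-1}=y_4y_7y_4^{-2}$, $(y_2y_4)(y_4y_6)^{-1}=y_2y_4^2y_8^{-1}y_4^{-2}$, and $(y_2y_4)^2=y_2^2y_4y_8$, so the chain $y_4^2,\,y_8^2,\,y_4y_8,\,y_2^2,\,y_0y_4,\,y_2y_3\in H$ goes through (note your argument uses generators of $\vec F_3$ with $i\le 2$ only, so it rests on the generation theorem of Section 3, which precedes the lemma). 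What each buys: the paper's route is shorter but leans on an external result about $F_2$ and on the compatibility $\iota(\vec F_2)\subseteq\vec F_3$; yours is longer but self-contained within $F_3$, which is a genuine (if modest) gain in independence from \cite{GS2}. Your explicit verification of $H\subseteq G_3$ via the parity homomorphism is also a welcome touch the paper leaves implicit.
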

\begin{proof}
Thanks to Proposition \ref{propGi} it is enough to show that $y_0y_3, y_1y_3, y_2y_3$ are in $K:=\langle y_0^2, \vec{F}_3\rangle$. By Example \ref{ese3} we know that the element $y_0y_3$ is in $\vec{F_3}$. Moreover, 
$$
y_0y_3 y_0^{-2} y_0y_3= y_1y_0 y_0^{-2} y_0y_3= y_1y_3\in K
$$
so we are left to show that $y_2y_3\in K$. To this end, recall from \cite[Lemma 3.3]{GS2} that $\langle x_0^2, \vec{F}_2\rangle = G_2$, where $G_2$ consists of the elements in $F_{2}$ whose normal form has even length.
%\subset \langle y_0^2, \vec{F}_3\rangle$  
 Since $y_2=\iota(x_1)$ and $y_0=\iota(x_0)$, we have $\iota(x_1x_0)\in \iota(G_2)$. 
Now thanks to the following equality we are done
$$
y_2y_0y_0^{-2}y_0y_3=y_2y_3 \in K.
$$ 
\end{proof}
\begin{lemma}\label{lemmay03}
Let $g\in F_3\setminus \vec{F}_3$, 
then there is a positive element in %$[(\vec{F}_3 g\vec{F}_3)\cap G]\setminus \vec{F}_2$.
$(\vec{F}_3 g\vec{F}_3\cap \iota(F_2))\setminus \iota(\vec{F}_2)$.
\end{lemma}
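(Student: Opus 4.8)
The plan is to split the statement into an easy ``orientation'' part and a harder ``shape'' part, disposing of the orientation part at once. For any $p$ lying in the double coset the condition $p\notin\iota(\vec{F}_2)$ is automatic: if $p=agb$ with $a,b\in\vec{F}_3$ and $p\in\iota(\vec{F}_2)$, then since $\iota(\vec{F}_2)\subseteq\vec{F}_3$ by Remark \ref{rem1} we would have $p\in\vec{F}_3$, hence $g=a^{-1}pb^{-1}\in\vec{F}_3$, against the hypothesis. Thus every element of $\vec{F}_3 g\vec{F}_3\cap\iota(F_2)$ already lies outside $\iota(\vec{F}_2)$, and it suffices to exhibit a positive element of $\iota(F_2)$ inside the double coset $\vec{F}_3 g\vec{F}_3$.

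First I would show that the double coset meets $\iota(F_2)$, i.e. find $a,b\in\vec{F}_3$ with $agb\in\iota(F_2)$. Write $g=(T_+,T_-)$. By Example \ref{ese3} an element lies in $\iota(F_2)$ exactly when, in reduced form, neither tree has a caret attached below a middle edge. Left-multiplying by an oriented element $(R,T_+)$ replaces $T_+$ by a tree $R$ carrying the same vertex-colouring, and right-multiplying by $(T_-,S)$ replaces $T_-$ by such an $S$; by Proposition \ref{prop1} and Lemma \ref{lemma-sign} these connecting elements are oriented precisely when $R$ and $S$ reproduce the weight parities of $T_+$ and of $T_-$. The goal is to choose $R$ and $S$ binary. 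Here I would use the freedom to refine $g$ first by pairs of opposing carets: refinement alters the colouring through the local moves of Lemmas \ref{lemma1} and \ref{lemma-sign}, and with enough room the prescribed weight parities can be realized by binary trees. This produces $agb=\iota(q)$ for some $q\in F_2$, with $q\notin\vec{F}_2$ by the previous paragraph.

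It then remains to make the element positive without leaving $\iota(F_2)$ or the double coset. Because $\iota(\vec{F}_2)\subseteq\vec{F}_3$, multiplying $\iota(q)$ on both sides by elements of $\iota(\vec{F}_2)$ keeps us inside $\vec{F}_3 g\vec{F}_3\cap\iota(F_2)$, and under $\iota$ this is simply multiplication of $q$ by elements of $\vec{F}_2$. Hence it is enough to know that $\vec{F}_2 q\vec{F}_2$ contains a positive element --- the binary analogue of the present statement, provable by the same method in the simpler setting of $F_2$ and $\vec{F}_2$ (cf. Golan--Sapir \cite{GS2, GS}); applying $\iota$ to such a representative gives the desired positive element of $\iota(F_2)$ in $\vec{F}_3 g\vec{F}_3$.

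The main obstacle is the shape-straightening of the second paragraph. An arbitrary colouring of a ternary tree need not be realized by a binary one, so one cannot simply swap $T_\pm$ for binary trees of the same colouring, and the argument genuinely relies both on the room created by refinement and on the hypothesis $g\notin\vec{F}_3$, which forces the colourings of $\Gamma_+(T_+)$ and $\Gamma_-(T_-)$ to disagree and hence prevents them from being rigidly constrained at the same time. I would carry out this step as an induction decreasing a complexity measure of $(T_+,T_-)$ --- modelled on the fourteen-case analysis in the proof that $\vec{F}_3$ is finitely generated --- using at each stage an oriented move to cap off a caret sitting below a middle edge, thereby strictly reducing the complexity while remaining in the double coset.
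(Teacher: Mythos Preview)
Your reduction of the ``$\notin\iota(\vec{F}_2)$'' clause to the trivial observation that the double coset avoids $\vec{F}_3$ is correct and matches the paper's concluding paragraph. Your Step~3 is also a legitimate shortcut: once a representative $\iota(q)$ with $q\in F_2\setminus\vec{F}_2$ is in hand, multiplying on both sides by $\iota(\vec{F}_2)\subseteq\vec{F}_3$ reduces positivity to the binary case, and that case is indeed handled in \cite{GS2}. The paper does not argue this way; it obtains positivity \emph{first} (by minimal-length representative plus right-multiplication by $y_{2k+1}^2$ or $y_{2k}y_{2k+2}$), and only afterwards pushes the resulting positive word into $\iota(F_2)$ by explicit normal-form surgery on the generators~$y_i$.

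The genuine gap is your Step~2. You propose to replace $T_\pm$ by binary-type trees $R,S$ with the same leaf colourings, after refinement. But you only assert that ``with enough room the prescribed weight parities can be realized by binary trees'' and that an induction ``modelled on the fourteen-case analysis'' will cap off carets below middle edges; no such induction is actually set up or executed. Note that not every sign sequence on $n$ vertices arises as the colouring of a $\Gamma_+$-tree coming from a binary tree (already for small $n$ only some patterns occur), so the existence of suitable $R,S$ really does require the refinement argument you allude to, and that argument needs to control $T_+$ and $T_-$ simultaneously. The paper bypasses this geometric difficulty entirely: it never reasons about tree colourings at this stage, but instead manipulates the normal form of a positive word $y_{i_1}\cdots y_{i_n}$, systematically removing factors $y_0$, conjugating by $y_0y_2$ to shift indices, and converting odd-indexed factors $y_{2k+1}$ into even-indexed ones via left/right multiplication by elements such as $(y_{2k-2}y_{2k+1})y_{2k+1}^{-2}$. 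If you want to salvage your tree-diagram route you must actually prove the realizability-after-refinement claim; otherwise, the normal-form computation in the paper is the missing ingredient.
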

\begin{proof}
First of all we show that there is a positive element in $\vec{F}_3 g \vec{F}_3$. Let $w$ be an element of minimal length in $\vec{F}_3 g \vec{F}_3$. 
If it is positive, we can move on to the next step. Otherwise, let $y_i$ be the last letter of $w$ in the normal form, that is $w=w' y_{i}^{-1}$. If $i=2k+1$, $wy_{2k+1}^2=w' y_{2k+1}^{-1} y_{2k+1}^2$ is an element still with normal form of minimal length, but with less negative factors. If $i=2k$, one may consider $w(y_{2k}y_{2k+2})$ which does the job. By iteration we 
 get a positive element in $\vec{F}_3 g \vec{F}_3$.

We now show that it is actually possible to find a positive element in %$[(\vec{F}_3 g\vec{F}_3)\cap G]\setminus \vec{F}_2$. 
$(\vec{F}_3 g\vec{F}_3\cap \iota(F_2))\setminus \iota(\vec{F}_2)$. This means that it is the product of elements in the set $\{y_{2i}\}_{i\geq 0}$. So far we have an element  $w$ which can be expressed as $y_{i_1}y_{i_2}\cdots y_{i_n}$ in its  normal form. If $i_1=0$ we may consider $y_3^2(y_3^{-1}y_0^{-1})w=y_3^2y_{i_2}\cdots y_{i_n}$, which has the same length and less factors equal to $y_0$. If $i_2=0$, after using the defining relations and the former trick we may erase another factor equal to $y_0$. After finitely many steps, we may suppose that there are no factors equal to $y_0$.  
Now that we know that $w$ does not contain $y_0$, we may use the relation $(y_0y_2)^{-1} w (y_0y_2)=\varphi_R^2(w)$ (as defined in Lemma \ref{lemmashift}). This allows us to suppose that all the indices are non-zero and arbitrarily big. %, for example bigger $n^n$. 
Suppose that $i_1=2k$, by multiplying $w$ (repeatedly) on the left by factors of the form $y_{2h+2}^{-1}y_{2h}^{-1}\in\vec{F}_3$ we get 
\begin{align*}
& y_{2k+2m+3}y_{2k+2m+2}y_{2k+2m+2}^{-1}y_{2k+2m}^{-1}\cdots y_{2k+2}^{-1}y_{2k}^{-1} y_{2k} y_{i_2}\cdots y_{i_n} = \\
&= y_{2k+2m+3} y_{i_2}\cdots y_{i_n} =\\
&=  y_{i_2}\cdots y_{i_n} y_{2k+2m+3-2(n-1)}
\end{align*}
where we used that $y_{2k+2m+3}y_{2k+2m+2}=y_{2k+2m+2}y_{2k+2m+5}\in\vec{F}_3$.
After multiplying by another suitable element we get an element %in $[(\vec{F}_3 g\vec{F}_3)\cap G]\setminus \vec{F}_2$ with less even terms.
of the form
\begin{align*}
&  y_{i_2}\cdots y_{i_n} y_{2k+2m-2(n-1)+3} (y_{2k+2m-2(n-1)+3}^{-1} y_{2k+2m-2(n-1)}^{-1}) (y_{2k+2m-2(n-1)}y_{2k+2m-2(n-1)+2})\\
&=  y_{i_2}\cdots y_{i_n} y_{2k+2m-2n+2}
\end{align*}
This allows us to suppose that the first factor in $w$ is odd, that is 
 %If $i_1=2k+1$,
$i_1=2k+1$, with $k\neq 0$.
In this case just consider
\begin{align*}
& (y_{2k-2}y_{2k+1}) y_{2k+1}^{-2}(y_{2k+1}y_{i_2}\cdots y_{i_n}) \\
&= y_{2k-2}y_{i_2}\cdots y_{i_n} 
\end{align*}
which has the same length and less odd factors (i.e. $y_{2h+1}$).
If $i_1=1$, just conjugate by $y_0y_2$ so that we may assume that $i_1\geq 3$. Now after repeating the previous arguments, we get an element without odd factors.

We make a final observation. We started from $g\in F_3\setminus \vec{F}_3$ and we considered an element 
$w$ of minimal length in $\vec{F}_3g\vec{F}_3$. In all the steps of the proof we multiplied $w$ by suitable elements of $\vec{F}_3$ and got an element $w'=f_1 w f_2\in (\vec{F}_3 g\vec{F}_3\cap \iota(F_2))$, where $f_1, f_2$ are some suitable elements in $\vec{F}_3$. For this reason it is clear that the resulting element cannot be in $\vec{F}_3$ (and in $\iota(\vec{F}_2)\subset \vec{F}_3$).
\end{proof}

\begin{theorem}
Let  $g$ be an element of $F_3\setminus \vec{F}_3$. Then, the group generated by $g$ and $\vec{F}_3$ is $G_3$ if $g$ has even length, and $F_3$ otherwise.
In particular, the group $\vec{F}_3$ is maximal in $G_3$.
\end{theorem}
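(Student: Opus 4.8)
The plan is to reduce both assertions to the corresponding statement for $F_2$ due to Golan and Sapir \cite{GS2}, using Lemma~\ref{lemmay03} as the bridge into $\iota(F_2)$ and Lemma~\ref{lemmay02} to recover all of $G_3$. I will repeatedly use that the normal-form length mod $2$ is a homomorphism $\ell\colon F_3\to\IZ/2\IZ$, $\ell(f)=\log_3 f'(1)\bmod 2$, with kernel $G_3$ (indeed $g(1)=1$ always, so $\log_3$ of the derivative at $1$ is additive, cf. Proposition~\ref{propGi}); that $\vec F_3\leq G_3$ (Lemma~\ref{lemmay02}); that $\iota(\vec F_2)\subseteq\vec F_3$ (Remark~\ref{rem1}); and that $\iota(G_2)=G_3\cap\iota(F_2)$. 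In particular $\ell$ vanishes on $\vec F_3$, so every element of a double coset $\vec F_3\, g\,\vec F_3$ has the same length-parity as $g$.

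First I would treat the case that $g$ has even length, i.e. $g\in G_3\setminus\vec F_3$. Then $\langle g,\vec F_3\rangle\leq G_3$ is immediate. By Lemma~\ref{lemmay03} there is an element $h=f_1 g f_2$ with $f_1,f_2\in\vec F_3$ lying in $\iota(F_2)\setminus\iota(\vec F_2)$; it belongs to $\langle g,\vec F_3\rangle$, and since $f_1,f_2\in\vec F_3\leq G_3$ have even length so does $h$, whence $h\in G_3\cap\iota(F_2)=\iota(G_2)$. Writing $h=\iota(h')$ we get $h'\in G_2$, and $h'\notin\vec F_2$ because $h\notin\iota(\vec F_2)$. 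Invoking the Golan--Sapir result that $\vec F_2$ is maximal in $G_2$ \cite{GS2}, we obtain $\langle h',\vec F_2\rangle=G_2$; applying $\iota$ and using $\iota(\vec F_2)\subseteq\vec F_3$ gives $\iota(G_2)=\langle h,\iota(\vec F_2)\rangle\subseteq\langle g,\vec F_3\rangle$. In particular $y_0^2=\iota(x_0^2)\in\langle g,\vec F_3\rangle$, so Lemma~\ref{lemmay02} yields $G_3=\langle y_0^2,\vec F_3\rangle\subseteq\langle g,\vec F_3\rangle$, hence $\langle g,\vec F_3\rangle=G_3$. Since this holds for every $g\in G_3\setminus\vec F_3$, the subgroup $\vec F_3$ is maximal in $G_3$.

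For the case that $g$ has odd length, so $g\notin G_3$, I would again apply Lemma~\ref{lemmay03} to $g$ to get $h=f_1 g f_2\in\langle g,\vec F_3\rangle$ with $h\in\iota(F_2)\setminus\iota(\vec F_2)$; now $\ell(h)=\ell(g)\neq 0$, so in $h=\iota(h')$ the element $h'$ lies in $F_2\setminus G_2$, hence in $F_2\setminus\vec F_2$ and of odd length. By the Golan--Sapir theorem for $F_2$ \cite{GS2}, $\langle h',\vec F_2\rangle=F_2$, so $\iota(F_2)=\langle h,\iota(\vec F_2)\rangle\subseteq\langle g,\vec F_3\rangle$; in particular $y_0^2\in\langle g,\vec F_3\rangle$ and $G_3\subseteq\langle g,\vec F_3\rangle$ by Lemma~\ref{lemmay02}. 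Combined with $g\notin G_3$ and $[F_3:G_3]=2$ (Proposition~\ref{propGi}), this forces $\langle g,\vec F_3\rangle=F_3$. (One can also bootstrap the odd case from the even one: since $\vec F_3$ coincides with its commensurator in $F_3$, hence with its normalizer, the non-normalizing element $g\notin\vec F_3$ satisfies $gfg^{-1}\notin\vec F_3$ for some $f\in\vec F_3$; as $\ell(gfg^{-1})=2\ell(g)=0$, the even case gives $G_3=\langle gfg^{-1},\vec F_3\rangle\subseteq\langle g,\vec F_3\rangle$, and again $g\notin G_3$ finishes.)

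The substance of the proof is carried by Lemmas~\ref{lemmay02} and~\ref{lemmay03}, which are already at hand, so what remains above is largely bookkeeping. The one point that needs care is tracking $\ell$ through the double-coset surgery of Lemma~\ref{lemmay03}: one must check that $h=f_1 g f_2$ keeps the length-parity of $g$, so that via $\iota(G_2)=G_3\cap\iota(F_2)$ it pulls back to an element of $F_2$ of the right parity, while the clause ``$\setminus\iota(\vec F_2)$'' in Lemma~\ref{lemmay03} is exactly what guarantees that this preimage lies outside $\vec F_2$, so that the Golan--Sapir dichotomy in $F_2$ applies. Beyond this I anticipate no obstacle.
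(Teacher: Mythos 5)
Your argument is correct and is essentially the paper's own proof: both pass through Lemma~\ref{lemmay03} to land in $\iota(F_2)\setminus\iota(\vec{F}_2)$, split according to parity (membership in $\iota(G_2)$), invoke the Golan--Sapir theorem for $F_2$, recover $G_3$ via Lemma~\ref{lemmay02}, and settle the odd case by the index-$2$ statement of Proposition~\ref{propGi}; your explicit tracking of length parity through the double-coset surgery merely makes precise what the paper compresses into its ``without loss of generality''. Only the parenthetical bootstrap of the odd case should be dropped: the commensurator (hence normalizer) statement is deduced later in the paper from this very theorem (via Theorem~\ref{theo-stab}), so invoking it here would be circular---but your main argument does not use it.
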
 
\begin{proof}
Let  $g$ be an element of $F_3\setminus \vec{F}_3$.  Thanks to the previous lemma we know that 
$(\vec{F}_3 g\vec{F}_3\cap \iota(F_2))\setminus \iota(\vec{F}_2)$
contains a positive element. Without loss of generality, we may suppose that $g$ is positive and in $\iota(F_2)\setminus \iota(\vec{F}_2)$.
Therefore, we have to consider two cases, depending on whether $g$ is in $\iota(G_2)$ or not.
If $g\in \iota(G_2)\setminus \iota(\vec{F}_2)$, by   \cite[Theorem 3.12]{GS2} we have that $\langle g, \iota(\vec{F}_2)\rangle =\iota(G_2)$. In particular, we have $y_0^2\in \langle g, \vec{F}_3\rangle$ and thus $\langle g, \vec{F}_3\rangle = G_3$ by Lemma \ref{lemmay02}.

Suppose that $g\in \iota(F_2)\setminus \iota(G_2)$. This means that its normal form has odd length.  By  \cite[Theorem 3.12]{GS2} we  have that  $\langle g, \iota(\vec{F}_2)\rangle = \iota(F_2)$. 
In this case, we clearly have that $y_0^2\in \langle g, \vec{F}_3\rangle$ and, hence, $G_3\subset\langle g, \vec{F}_3\rangle$. Now $\langle g, G_3\rangle$ must be equal to $F_3$ by Proposition
\ref{propGi}.
\end{proof}
\begin{corollary}
The Brown-Thompson group $F_3$ has a maximal subgroup of infinite index  isomorphic to $\vec{F}_3$ that does not stabilize any $x\in (0,1)$.
\end{corollary}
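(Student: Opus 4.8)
The plan is to transport the maximality statement just established through an isomorphism. By Proposition~\ref{propGi} there is an isomorphism $\phi\colon F_3\to G_3$ with $\phi(y_i)=y_iy_3$ for $i=0,1,2$; I would put $\mathcal{H}:=\phi^{-1}(\vec{F}_3)\leq F_3$, so that $\mathcal{H}\cong\vec{F}_3$. First I would settle the index: $[F_3:\mathcal{H}]=[G_3:\vec{F}_3]$ and $[F_3:G_3]=2$, so it suffices to show $[F_3:\vec{F}_3]=\infty$. Were this index finite, every conjugate of $\vec{F}_3$ in $F_3$ would be commensurable with $\vec{F}_3$, so $\operatorname{Comm}_{F_3}(\vec{F}_3)=F_3$; but $\vec{F}_3$ is its own commensurator (the corollary of Section~\ref{sec-2}), forcing $\vec{F}_3=F_3$, which contradicts $y_0\notin\vec{F}_3$ (Example~\ref{exe2}). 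Hence $[F_3:\mathcal{H}]=\infty$; in particular $\vec{F}_3\subsetneq G_3$ and $\mathcal{H}\subsetneq F_3$.

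Next, maximality of $\mathcal{H}$ in $F_3$. Since $\phi$ maps $F_3$ isomorphically onto $G_3$, the subgroups of $F_3$ containing $\mathcal{H}$ correspond bijectively to the subgroups of $G_3$ containing $\vec{F}_3$. By the preceding theorem, for any $g\in G_3\setminus\vec{F}_3$ (such a $g$ has even normal form length, being in $G_3$) one has $\langle g,\vec{F}_3\rangle=G_3$; hence $\vec{F}_3$ is maximal in $G_3$, and therefore $\mathcal{H}$ is maximal in $F_3$.

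It remains to show that $\mathcal{H}$ is non-parabolic, i.e.\ that it fixes no point of $(0,1)$. The key point is that $\phi(y_0)=y_0y_3$, which lies in $\vec{F}_3$ by Example~\ref{ese3}; hence $y_0\in\mathcal{H}$. Reading the action of $y_0$ off the local rules of Figure~\ref{compute} (equivalently, off the piecewise formula in Example~\ref{exe2}), one finds that $y_0$ is the piecewise-linear homeomorphism of $[0,1]$ equal to $t\mapsto 3t$ on $[0,\tfrac29]$, to $t\mapsto t+\tfrac49$ on $[\tfrac29,\tfrac13]$, and to $t\mapsto\tfrac23+\tfrac t3$ on $[\tfrac13,1]$; in particular $y_0(t)>t$ for every $t\in(0,1)$, so $y_0$ has no fixed point there. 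The same is then true of any subgroup of $F_3$ containing $y_0$, in particular of $\mathcal{H}$, so $\mathcal{H}$ is contained in no parabolic subgroup $\operatorname{Stab}_{F_3}(p)$ with $p\in(0,1)$. Combining the three paragraphs proves the corollary.

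As for difficulty: the genuine content is already in the theorem preceding the corollary (resting on Lemmas~\ref{lemmay02}, \ref{lemmay03} and on \cite{GS2}), so for the corollary itself there is no real obstacle — only two bookkeeping points to get right, namely the commensurator argument upgrading ``$\vec{F}_3\neq F_3$'' to infinite index, and the observation that already the single generator $y_0$ lies in the transported copy $\mathcal{H}$, which makes non-parabolicity immediate. The only thing to watch is to use precisely the isomorphism $\phi$ of Proposition~\ref{propGi}, with $\phi(y_i)=y_iy_3$, so that Example~\ref{ese3} applies on the nose.
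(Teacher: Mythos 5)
Your proposal is correct and is essentially the paper's own argument: transport $\vec{F}_3$ through the isomorphism of Proposition~\ref{propGi}, note that $\phi(y_0)=y_0y_3\in\vec{F}_3$ (Example~\ref{ese3}) so the transported copy contains $y_0$, which fixes no point of $(0,1)$, while maximality comes from the preceding theorem. The only addition is your explicit justification of the infinite index via the commensurator corollary; the paper leaves this point implicit, and your argument is sound and non-circular since that corollary rests on Theorem~\ref{theo-stab} and the maximality theorem, not on the present statement.
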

\begin{proof}
Recall from Proposition \ref{propGi} the notation $w_0$, $w_1$, $w_2$ for the three generators of $G_3$. Denote by $\Phi: G_3\to F_3$ the isomorphism established in that Proposition. Since the action of %$w_0$ 
$y_0$ on the unit interval $[0,1]$ was described in Example \ref{exe2}, it is clear that  $y_0=\Phi(w_0)\in \Phi(\vec{F}_3)\leq \Phi(G)= F_3$ provides an element which does not fix any $x\in (0,1)$.
\end{proof}

We are at last in a position to give a proof of Theorem \ref{theo-stab}.
\begin{proof}[Proof of Theorem \ref{theo-stab}]
 It can be easily seen that all the generators of $\vec{F}_3$  preserve the subset $ Z$ and, therefore, $\vec{F}_3\leq {\rm Stab}( Z)$. Since neither $y_0$, nor $y_0^2$ belong to ${\rm Stab}( Z)$, it holds $\vec{F}_3={\rm Stab}( Z)$.
\end{proof}

Similarly to the case of $\vec F\in F$ (compare with \cite[Corollary 3]{GS} and \cite[Section 5.2]{jo2}), the above results allow us to conclude that the quasi-regular representation of $F_3$ associated with $\vec{F}_3$ is irreducible.
 \begin{corollary}
The ternary oriented Thompson group $\vec F_3$ coincides with its commensurator. In particular, the quasi-regular representation of $F_3$ on $F_3/\vec{F}_3$ is irreducible.
\end{corollary}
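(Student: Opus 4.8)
The plan is to derive the corollary from the preceding theorem by a standard commensurator argument. Recall that the \emph{commensurator} $\mathrm{Comm}_{F_3}(\vec F_3)$ consists of those $g\in F_3$ for which $\vec F_3 \cap g\vec F_3 g^{-1}$ has finite index in both $\vec F_3$ and $g\vec F_3 g^{-1}$. Obviously $\vec F_3 \subseteq \mathrm{Comm}_{F_3}(\vec F_3)$, so it suffices to prove the reverse inclusion: if $g\notin \vec F_3$, then $g$ does not commensurate $\vec F_3$. The overall strategy is to show that for such $g$ the intersection $\vec F_3 \cap g\vec F_3 g^{-1}$ already fails to have finite index in $\vec F_3$, and then invoke Mackey's classical criterion (as cited via \cite{Ma} in the introduction) that the quasi-regular representation on $F_3/\vec F_3$ is irreducible if and only if $\vec F_3$ is self-commensurating.

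First I would use Theorem \ref{theo-stab}: $\vec F_3 = \mathrm{Stab}(Z)$ for the subset $Z$ of triadic rationals defined there. Then $g\vec F_3 g^{-1} = \mathrm{Stab}(g(Z))$, and the intersection $\vec F_3 \cap g\vec F_3 g^{-1}$ is the stabilizer of the pair $(Z, g(Z))$, equivalently the subgroup of $F_3$ fixing $Z$ and $g(Z)$ setwise simultaneously. The key point is that since $g\notin\vec F_3 = \mathrm{Stab}(Z)$, we have $g(Z)\neq Z$, so there is a triadic rational $t$ lying in exactly one of $Z$, $g(Z)$. Any element of the intersection must fix this \emph{difference set} $Z\triangle g(Z)$ setwise. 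I would then argue that a subgroup of $F_3$ that stabilizes $Z$ and additionally stabilizes a nonempty, ``small'' invariant set of the form $Z\triangle g(Z)$ must have infinite index in $\vec F_3$: indeed, $\vec F_3$ acts on $Z$ with only finitely many orbits (in fact this follows from the structure of $Z$ as weights-and-digit-sums data, mirroring the $\vec F_2$ case in \cite[Corollary 3]{GS}), and within a single $\vec F_3$-orbit one can move any given point of $Z$ to infinitely many distinct points while keeping the rest of any finite configuration controlled, so no finite-index subgroup can fix an extra point or finite subset of $Z$. Concretely, I would exhibit infinitely many cosets of $\vec F_3\cap g\vec F_3 g^{-1}$ in $\vec F_3$ by producing, for a fixed $t\in Z\triangle g(Z)$, infinitely many elements $h_n\in\vec F_3$ with pairwise distinct values $h_n(t)$, so that the $h_n$ lie in distinct cosets.

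The main obstacle, and the step requiring the most care, is the orbit/transitivity analysis: one must show that $\vec F_3$ acts with finitely many orbits on $Z$ and, more importantly, that the pointwise stabilizer in $\vec F_3$ of any single $t\in Z$ (or of any finite subset of $Z$) has infinite index. This is the direct analogue of what Golan--Sapir establish for $\vec F_2$ acting on the corresponding set of dyadic rationals, and I would adapt their argument: using the description of $\vec F_3$ via the generating set from Section 3 together with the triadic-interval tree picture, one builds explicitly, for a given $t=.a_1\cdots a_n\in Z$, a sequence of oriented elements pushing $t$ deeper into the tree along paths that preserve both the ``even number of $1$'s'' and ``even weight'' conditions, yielding infinitely many images. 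Once infinitely many cosets are exhibited, $[\vec F_3 : \vec F_3\cap g\vec F_3 g^{-1}] = \infty$, hence $g\notin\mathrm{Comm}_{F_3}(\vec F_3)$; as $g$ was an arbitrary element outside $\vec F_3$, we conclude $\mathrm{Comm}_{F_3}(\vec F_3) = \vec F_3$. Finally, by Mackey's theorem \cite{Ma}, self-commensurating subgroups give irreducible quasi-regular representations, so the representation of $F_3$ on $\ell^2(F_3/\vec F_3)$ is irreducible, completing the proof.
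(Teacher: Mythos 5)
Your reduction to Mackey's criterion and the overall target (show $[\vec F_3:\vec F_3\cap g\vec F_3g^{-1}]=\infty$ for every $g\in F_3\setminus\vec F_3$) are correct, but the coset-counting step has a genuine gap. The intersection $\vec F_3\cap g\vec F_3 g^{-1}=\mathrm{Stab}(Z)\cap\mathrm{Stab}(g(Z))$ stabilizes $Z\triangle g(Z)$ only \emph{setwise}, and this symmetric difference is in general an infinite set of triadic rationals. If $h_1=h_2u$ with $u$ in the intersection, then $h_1(t)=h_2(u(t))$ where $u(t)$ lies in $Z\triangle g(Z)$ but need not equal $t$; hence $h_1(t)\neq h_2(t)$ is perfectly compatible with $h_1,h_2$ lying in the same coset, and your criterion ``pairwise distinct values $h_n(t)$ $\Rightarrow$ distinct cosets'' fails. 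What you actually need is that the $\vec F_3$-orbit of the \emph{set} $g(Z)$ (whose cardinality equals the index of $\mathrm{Stab}(g(Z))\cap\vec F_3$ in $\vec F_3$) is infinite, i.e.\ that the sets $h_n(g(Z))$ are pairwise distinct, and nothing in your argument establishes this. The auxiliary claims you lean on (finitely many $\vec F_3$-orbits on $Z$, infinite index of point stabilizers, moving a point while ``keeping the rest of any finite configuration controlled'') are asserted by analogy with Golan--Sapir \cite{GS} but not proved here, and even granted they address pointwise fixing of a point or finite subset, not setwise stabilization of the infinite set $Z\triangle g(Z)$, so they do not close the gap.

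The paper's proof uses a different and more economical mechanism, which you may want to adopt: take $h\in F_3\setminus\vec F_3$ and suppose the index were finite; then for the single element $(y_0y_3)^{-1}\in\vec F_3$ some power lies in $h\vec F_3h^{-1}$, so $h^{-1}(y_0y_3)^{-rk}h\in\vec F_3$ for all $k$. This is contradicted by a dynamical argument: (i) as in \cite[Lemma 4.14]{GS} there is $m$ such that $h$ preserves the weight, hence membership in $Z$, of every finite ternary fraction smaller than $3^{-m}$; (ii) the element $(y_0y_3)^{-1}$ pushes every $t\neq 1$ below $3^{-m}$ after finitely many iterations. Starting from $t\in Z$ with $h^{-1}(t)\notin Z$ (which exists by Theorem \ref{theo-stab} since $h\notin\mathrm{Stab}(Z)$), one concludes that $h^{-1}(y_0y_3)^{-n}h$ moves $t$ outside $Z$ for all large $n$, hence is not in $\vec F_3$. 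So either prove that the orbit of $g(Z)$ under $\vec F_3$ is infinite (a statement your current argument does not reach), or replace the coset-exhibition step by this power-and-contraction argument; your final appeal to \cite{Ma} for irreducibility is fine as written.
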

 \begin{proof}
 First of all, we point out that for any $f, g\in F_3$, $x\in [0,1]$,  we adopt the standard notation $f\cdot g(x)\equiv g(f(x))$.\\
 It is enough to prove the claim about the commensurator since the irreducibility of the representation then  follows from \cite{Ma}.
 We follow the same strategy as \cite{GS}. 
 Let $h\in F_3\setminus \vec{F}_3$ and $I:=[\vec{F}_3:\vec{F}_3\cap h\vec{F}_3h^{-1}]$. Suppose that $I<\infty$ and pick $g=(y_0y_3)^{-1}\in\vec{F}_3$. Then, there exists an $r\in\IN$ such that $g^r\in h\vec{F}_3h^{-1}$, or equivalently $h^{-1}g^rh\in\vec{F}_3$. This implies that $h^{-1}g^{rk}h\in\vec{F}_3$ for all $k\in \IN$. We will reach a contradiction by showing that  $h^{-1}g^{n}h\not\in\vec{F}_3$ for all $n$ large enough.
 
 First of all, we observe that the same argument as in \cite[Lemma 4.14]{GS} shows that there exists an $m\in\IN$ such that 
 for any finite ternary fraction $t<3^{-m}$, the weight $c(t)$ is equal to that of $h(t)$.
 Secondly, 
 thanks to the Example \ref{ese3} it is easy to see that 
 for every $t\neq 1$ we have that $g^n(t)< 3^{-m}$ if $n$ is large enough.
 
 Since $h\not\in\vec{F}_3$, there exists a $t\in Z$ such that $t_1:=h^{-1}(t)\not\in  Z$.
 We observe that for all $l\in\IN$ we have $g^l(t_1)\not\in  Z$. As $g^n(t_1)<3^{-m}$ we have that   $h(g^n(t_1))\not\in Z$.
Therefore, it holds $h^{-1}g^nh(t)=h(g^n(t_1))\not\in Z$.
Since $h^{-1}g^nh$ does not stabilize $ Z$, we have that $h^{-1}g^nh\not\in\vec{F}_3$ and we are done.
 \end{proof} 
Given a graph $G$, its chromatic polynomial   ${\rm Chr}_G(x)$ is the unique polynomial which, when evaluated at a $Q\in\IN$, gives the number of proper vertex colourings of $G$ with $Q$ colours. The function
  $$
  {\rm Chr}(g,Q):=\frac{ {\rm Chr}_{\Gamma(T_+,T_-)}(Q)}{(Q-1)^{n-1}} \qquad Q\in\IN_{\geq 2}:=\{2,3,4,\ldots\}\; ,
  $$
  where $n$ is the number of the leaves of $T_\pm$, is  a positive type function on $F_3$ (this can be shown by the same argument as was used in \cite{AiCo1} for $F_2$).  The quasi-regular representation of $F_3$ associated with $\vec{F}_3$ coincides  with the representation associated to this positive type function with $Q=2$.

\section{On maximal subgroups of $F_k$}

 Given a point $x\in (0,1)$,  and $k\geq 2$, the stabilizer Stab$_{F_k}(x)$ %and in general Stab$_{F_k}(x)$ 
  under the natural action of the elements of $F_k$ seen as homeomorphisms of the unit interval are called the {\it parabolic subgroups} of $F_k$. These subgroups are natural examples of maximal subgroups of infinite index in $F_k$. 
 This was proven by Savchuk for $F_2$ and the proof  \cite[Prop. 2.4, p. 360]{Sav} readily adapts to any $k\geq 2$.
 \begin{theorem}\label{theo4}
 For any $x\in (0,1)$, the stabilizers Stab$_{F_k}(x)$ are maximal subgroups of infinite index of the Brown-Thompson group $F_k$. Moreover, the associated quasi-regular representations are irreducible.
 \end{theorem}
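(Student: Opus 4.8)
The plan is to adapt Savchuk's argument for $F=F_2$ \cite[Prop. 2.4]{Sav} to an arbitrary $k\ge 2$. The ingredients used there are that $F$ acts transitively on the dyadic rationals of $(0,1)$ and that, for a dyadic point $p$, the subgroup of elements of $F$ supported on $[0,p]$ (resp. on $[p,1]$) is again a Thompson-type group. For $F_k$ the corresponding statement is: for $p\in\IZ[1/k]\cap(0,1)$ the subgroup $F_{[0,p]}:=\{f\in F_k\,:\,f|_{[p,1]}={\rm id}\}$ is, up to the obvious identification, Brown's group $F_{k,r}$ for a suitable $r$ \cite{Brown}, and symmetrically for $F_{[p,1]}$; the one point of care --- invisible when $k=2$ --- is that neither $F_k$ nor the groups $F_{k,r}$ need act transitively on \emph{all} the $k$-adic rationals of their domain (there is an evident invariant modulo $k-1$), but they do act transitively on each of their orbits. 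With this in hand, I would establish the three assertions --- infinite index, maximality, irreducibility --- in turn.

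First I would dispose of the index. The orbit $O:=F_k\cdot x$ is infinite --- for $x\in\IZ[1/k]$ it is the set of all $q/k^m\in(0,1)$ with $q$ in a fixed residue class modulo $k-1$, and for $x\notin\IZ[1/k]$ it is manifestly infinite as well --- so $[F_k:{\rm Stab}_{F_k}(x)]=|O|=\infty$.

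For maximality it is equivalent to show that the action of $F_k$ on $O$ is primitive. Using the interval subgroups one first checks, exactly as in \cite{Sav}, that ${\rm Stab}_{F_k}(x)$ acts transitively on $O\cap(0,x)$ and on $O\cap(x,1)$: when $x\in\IZ[1/k]$ this follows at once from $F_{[0,x]},F_{[x,1]}\le{\rm Stab}_{F_k}(x)$ and the orbit structure of the groups $F_{k,r}$, and when $x\notin\IZ[1/k]$ one exhausts $(0,x)$ and $(x,1)$ by $k$-adic points $p$ and, in addition, uses the germ homomorphism ${\rm Stab}_{F_k}(x)\to\IZ$ recording the slope at $x$, whose kernel contains all the interval subgroups on either side of $x$. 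Given this, suppose $\sim$ is a non-trivial $F_k$-invariant equivalence relation on $O$ and $y\sim x$ with $y\ne x$, say $y<x$. The $\sim$-class $B$ of $x$ is ${\rm Stab}_{F_k}(x)$-invariant and meets $O\cap(0,x)$, hence $B\supseteq O\cap(0,x)$. For each $z\in O$ with $z>x$, choose $g\in F_k$ with $g(x)=z$; then $g(B)$ is the class of $z$, while $g(B)\supseteq g\bigl(O\cap(0,x)\bigr)=O\cap(0,z)\ni x$, so $g(B)=B$ and $z\in B$. Therefore $B=O$, contradicting the non-triviality of $\sim$ (the case $y>x$ is symmetric). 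Hence ${\rm Stab}_{F_k}(x)$ is maximal, and of infinite index by the preceding step.

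Finally, for irreducibility I would invoke the classical criterion \cite{Ma}: it suffices to prove that ${\rm Stab}_{F_k}(x)$ coincides with its commensurator in $F_k$. Let $g\in F_k\setminus{\rm Stab}_{F_k}(x)$ and set $y:=g(x)\ne x$. Then $g\,{\rm Stab}_{F_k}(x)\,g^{-1}={\rm Stab}_{F_k}(y)$, so the index of ${\rm Stab}_{F_k}(x)\cap g\,{\rm Stab}_{F_k}(x)\,g^{-1}={\rm Stab}_{F_k}(x)\cap{\rm Stab}_{F_k}(y)$ in ${\rm Stab}_{F_k}(x)$ equals $\bigl|{\rm Stab}_{F_k}(x)\cdot y\bigr|$; this is infinite, since for a $k$-adic point $p$ strictly between $x$ and $y$ the interval subgroup $F_{[0,p]}$ (if $y<x$) or $F_{[p,1]}$ (if $y>x$) lies in ${\rm Stab}_{F_k}(x)$ and already moves $y$ along an infinite orbit. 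Hence $g$ is not in the commensurator, so ${\rm Comm}_{F_k}\bigl({\rm Stab}_{F_k}(x)\bigr)={\rm Stab}_{F_k}(x)$, and \cite{Ma} gives the irreducibility. I expect the only real work --- but it is entirely routine --- to be the bookkeeping that identifies $F_{[0,p]}$ and $F_{[p,1]}$ with Brown's groups $F_{k,r}$ and nails down the transitivity of ${\rm Stab}_{F_k}(x)$ on $O\cap(0,x)$ and $O\cap(x,1)$, uniformly in $x$; no idea beyond \cite[Prop. 2.4]{Sav} should be required.
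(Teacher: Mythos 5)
Your proposal is sound and is, in substance, exactly the adaptation of Savchuk's argument that the paper invokes without details: the paper proves the first statement by remarking that \cite[Prop.~2.4]{Sav} readily adapts to all $k\geq 2$, and the second by citing \cite[Lemma 8]{Gar}. What you supply beyond the citation is the one genuinely new ingredient for $k\geq 3$, namely that $F_k$ is not transitive on the $k$-adic rationals (the orbits are the classes of the digit-sum invariant modulo $k-1$), and your primitivity argument correctly reduces maximality to transitivity of ${\rm Stab}_{F_k}(x)$ on the orbit points on each side of $x$, which for $k$-adic $x$ follows from the interval subgroups $F_{[0,x]},F_{[x,1]}$ as you say. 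Two soft spots should be made explicit. First, for non-$k$-adic $x$ the germ homomorphism is a red herring: what is actually needed is the orbit description of the interval groups $F_{[0,p]}$ ($p$ $k$-adic, $p<x$) on points sharing the tail of the $k$-ary expansion of $x$ --- two such points lie in one $F_{[0,p]}$-orbit if and only if the digit sums of the discarded prefixes agree mod $k-1$; granting this, one chooses $p$ strictly between $\max(y,z)$ and $x$ and gets the side-transitivity from the interval subgroups alone, with no germ data. Second, in the commensurator step you should say why the ${\rm Stab}_{F_k}(x)$-orbit of $y=g(x)$ is infinite: some element of $F_{[0,p]}$ (or $F_{[p,1]}$) moves $y$, and its forward orbit through $y$ is infinite by monotonicity. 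With these points filled in the argument is complete; note finally that your use of the commensurator criterion of \cite{Ma} for irreducibility is a different route from the paper's appeal to \cite{Gar}, but it is the same tool the paper itself uses for $\vec{F}_3$, so it fits the paper's framework and is equally valid.
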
 
For the second statement see  \cite[Lemma 8]{Gar}.

It is therefore natural to inquire, what are non-parabolic maximal subgroups of infinite index in $F_k, k\geq 2$, that would in some way generalise the subgroups $\vec{F}_2$ and $\vec{F}_3$ respectively in $F_2$ and in $F_3$.

The aim of this section is to exhibit subgroups of the Brown-Thompson groups $F_k$, which  in some way generalise the oriented subgroup $\vec{F}_2$.

Denote by $\CT_k$ the set of $k$-ary planar rooted trees. 
For every $k\geq 2$ there exists a natural map $\Phi_{k} : \CT_{2k-1}\to\CT_{k}$ displayed in Figure \ref{map}. 
  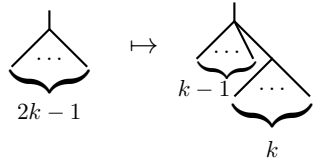
\begin{figure}
 \caption{The map $\Phi_k: \CT_{2k-1}\to \CT_{k}$ is obtained by replacing every  vertex of degree $2k$ with the tree displayed below.}\label{map}
\phantom{This text will be invisible} 
\[\begin{tikzpicture}[x=1cm, y=1cm,
    every edge/.style={
        draw,
      postaction={decorate,
                    decoration={markings}
                   }
        }
]

%SOPRA
\draw[thick] (0,0)--(.5,.5)--(1,0);
\draw[thick] (0.5,0.75)--(.5,.5);%--(1,0)--(.5,-.5)--(0,0);
\node at (0,-1.2) {$\;$};
\node at (1.75,0.25) {$\scalebox{1}{$\mapsto$}$};

\node at (0.5,.1) {$\scalebox{.8}{$\ldots$}$};
\node at (0.5,-.6) {$\scalebox{.8}{$2k-1$}$};
\node at (0.5,-.2) {$\scalebox{1.5}{$\underbrace{	\; }$} $};

\end{tikzpicture}
\begin{tikzpicture}[x=1cm, y=1cm,
    every edge/.style={
        draw,
      postaction={decorate,
                    decoration={markings}
                   }
        }
]

\draw[thick] (0.5,0.75)--(.5,.5);%--(1,0)--(.5,-.5)--(0,0);
\draw[thick] (0.75,0)--(.5,.5);%--(1,0)--(.5,-.5)--(0,0);
 \draw[thick] (0,0)--(.5,.5)--(1,0);
 \draw[thick] (.5,-.5)--(1,0)--(1.5,-0.5);

\node at (0.4,.1) {$\scalebox{.8}{$\ldots$}$};
\node at (1,-.4) {$\scalebox{.8}{$\ldots$}$};

\node at (0.1,-.4) {$\scalebox{.8}{$k-1$}$};
\node at (0.4,-.15) {$\scalebox{1.1}{$\underbrace{	\; }$} $};

\node at (1,-1.2) {$\scalebox{.8}{$k$}$};
\node at (1,-.75) {$\scalebox{1.5}{$\underbrace{	\; }$} $};

\node at (0,-1.2) {$\;$};
\end{tikzpicture}
\]
%$\mbox{              }  $ \qquad \vpic {fig1aa.png} {.75in} \qquad $X=$\vpic {fig1bbbb} {1.25in} $\thicksim$ \vpic {fig1cc} {1.25in} \qquad \;
\end{figure}
For every $k$, the map $\Phi_k$     induces a monomorphism $\varphi_k : F_{2k-1}\to F_{k}$. We denote by $\{y_i\}_{i\geq 0}$ and $\{z_i\}_{i\geq 0}$ the canonical generators of $F_{k}$ and $F_{2k-1}$, respectively. 
It is easy to see that $\varphi_k(z_i)=y_iy_{i+k-1}$. 
%\begin{remark}
We define the following subgroups

$$H_{k}:=\varphi_i(F_{2k-1})=\langle y_iy_{i+k-1}\; , \; i\geq 0\rangle\leq F_{k}$$

It is clear that for  all $k$, the subgroup $H_{k}$ sits inside $G_{k}$,  by observing that $y_i'(1)=k^{-1}$ for all $i$), see Figure \ref{genFk}.
%e.g. \cite[Figure 5]{Wladis}.
We observe that for $k=2$, the subgroup $H_2$ is exactly the  oriented subgroup $\vec{F}$. %, or rather $\iota(\vec{F}_2)$ to be precise. 
%\end{remark}
For $k=3$, the subgroup %image of $\varphi_2$ 
$H_3$ does not coincide with $\vec{F}_3$ since $\varphi_3(z_1)=y_1y_3\not\in \vec{F}_3$. 
%However, the intersection with $\vec{F}_3$ is still non-trivial as $\varphi_2(z_0)=y_0y_3\in\vec{F}_3$.
\begin{figure}
\caption{The generators of $F_k$. %The roots have degree $k$. Every other vertex has degree $k+1$. 
 In the tree diagram of $y_i$, with $i=1, \ldots, k-2$,  a split is attached on the of $i$-th edge below the root of the top tree.
}\label{genFk}
%\\
\phantom{This text will be invisible} 
\[
\begin{tikzpicture}[x=.35cm, y=.35cm,
    every edge/.style={
        draw,
      postaction={decorate,
                    decoration={markings}
                   }
        }
]

\node at (-1.5,0) {$\scalebox{1}{$y_0=$}$};
\node at (.55,0) {$\scalebox{.65}{$\ldots$}$};
\node at (2.55,0) {$\scalebox{.65}{$\ldots$}$};
\node at (-1.25,-3) {\;};

 \draw[thick] (2,2)--(2,2.5);
 \draw[thick] (2,-2)--(2,-2.5);
 
\draw[thick] (0,0) -- (2,2)--(4,0)--(2,-2)--(0,0);
\draw[thick] (1,1) -- (1,0)--(2,-2);
\draw[thick] (1,1) -- (2,0)--(3,-1);
\draw[thick] (2,2) -- (3,0)--(3,-1);

\end{tikzpicture}
\;\;
\begin{tikzpicture}[x=.35cm, y=.35cm,
    every edge/.style={
        draw,
      postaction={decorate,
                    decoration={markings}
                   }
        }
]

 \draw[thick] (1,3)--(1,3.5);
 \draw[thick] (1,-3)--(1,-3.5);

\node at (-4.25,0) {$\scalebox{1}{$y_{k-1}=$}$};
\node at (-1.25,-3.25) {\;};

\node at (-1.45,0) {$\scalebox{.65}{$\ldots$}$};
\node at (.55,0) {$\scalebox{.65}{$\ldots$}$};
\node at (2.55,0) {$\scalebox{.65}{$\ldots$}$};

\draw[thick] (2,2)--(1,3)--(-2,0)--(1,-3)--(2,-2);

\draw[thick] (0,0) -- (2,2)--(4,0)--(2,-2)--(0,0);
 \draw[thick] (1,1) -- (2,0)--(3,-1);

\draw[thick] (0,0) -- (2,2)--(4,0)--(2,-2)--(0,0);
\draw[thick] (1,1) -- (1,0)--(2,-2);
\draw[thick] (1,1) -- (2,0)--(3,-1);
\draw[thick] (2,2) -- (3,0)--(3,-1);

\draw[thick] (1,3) -- (-1,0)--(1,-3);

\end{tikzpicture}\;\;
\begin{tikzpicture}[x=.35cm, y=.35cm,
    every edge/.style={
        draw,
      postaction={decorate,
                    decoration={markings}
                   }
        }
]

\node at (-1.5,0) {$\scalebox{1}{$y_i=$}$};
\node at (-1.25,-3.25) {\;};

\node at (1.75,1.15) {$\scalebox{.75}{$i$}$};
\node at (1.35,0) {$\scalebox{.75}{$i$}$};

 \draw[thick] (2,2)--(2,2.5);
 \draw[thick] (2,-2)--(2,-2.5);
 
\node at (.55,0) {$\scalebox{.65}{$\ldots$}$};
\node at (2.55,0) {$\scalebox{.65}{$\ldots$}$};
\node at (3.55,0) {$\scalebox{.65}{$\ldots$}$};

\node at (8.55,0) {$\scalebox{1}{$i=1,\ldots , k-2.$}$};

%\draw[thick] (0,0) -- (2,2)--(4,0)--(2,-2)--(0,0);

\draw[thick] (0,0) -- (2,2)--(4,0)--(2,-2)--(0,0);
 \draw[thick] (1,0)--(2,-2);
\draw[thick] (3,0)--(2,1) -- (1,0); %--(3,-1);
\draw[thick] (2,0)--(3,-1);
\draw[thick] (2,2) -- (2,0);%--(3,-1);
\draw[thick] (3,0)--(3,-1);

%\draw[thick] (2,3) -- (-1,0)--(2,-3)--(5,0)--(2,3);
%\draw[thick] (2,3) -- (2,2);
%\draw[thick] (2,-3) -- (2,-2);

\end{tikzpicture}
\]
%$\mbox{              }  $ \qquad \vpic {fig1aa.png} {.75in} \qquad $X=$\vpic {fig1bbbb} {1.25in} $\thicksim$ \vpic {fig1cc} {1.25in} \qquad \;
\end{figure}
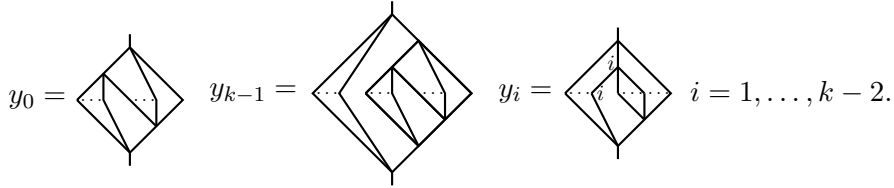

%The next result show another similarity between $H_k$ and the oriented subgroups $\vec F_2$ and $\vec F_3$.
\begin{proposition}\label{stabHi}
Let  $\alpha_k : G_k\to F_k\equiv\langle z_i\rangle$ be the isomorphism mapping $y_iy_k$ to $z_i$ for $i=0, \ldots, k-1$ (see the proof of Proposition \ref{propGi}). Then, the subgroup $\alpha_k(H_k)$ does not stabilise any $x\in (0,1)$.
\end{proposition}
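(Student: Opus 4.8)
The plan is to exhibit two elements of $\alpha_k(H_k)$ with no common fixed point in $(0,1)$; this suffices, since a point stabilised by all of $\alpha_k(H_k)$ would be fixed by each of them. I keep the notation $w_i=y_iy_k$ of Proposition~\ref{propGi} and recall from its proof that $\alpha_k(w_n)=z_n$ for every $n\ge 0$, where $z_0,z_1,\dots$ are the canonical generators of the target $F_k$ acting on $[0,1]$ in the usual way. The generator $y_1y_{1+k-1}=y_1y_k=w_1$ of $H_k$ gives $\gamma_1:=\alpha_k(y_1y_k)=z_1\in\alpha_k(H_k)$. For the generator $y_0y_{k-1}$ of $H_k$ I would first verify the identity $y_0y_{k-1}=w_0\,w_{k-1}\,w_k^{-1}$ inside $G_k$: a short manipulation with the relations $y_ny_l=y_ly_{n+k-1}$, together with $w_k=y_ky_{3k-2}$ from the proof of Proposition~\ref{propGi}, rewrites $w_0w_{k-1}w_k^{-1}$ as $y_0y_{k-1}$ (for instance, for $k=3$ one has $(y_0y_3)(y_2y_3)(y_3y_7)^{-1}=y_0y_2$). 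Hence $\gamma_0:=\alpha_k(y_0y_{k-1})=z_0z_{k-1}z_k^{-1}\in\alpha_k(H_k)$.

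Next I would collect the features of the standard $F_k$-action that are needed, all readable off Example~\ref{exe2} and the tree diagrams of Figure~\ref{genFk}: every generator $z_i$ is order-positive, i.e. $z_i(x)\ge x$ for all $x\in[0,1]$; $z_0$ fixes $0$ and $1$ and satisfies $z_0(x)>x$ for every $x\in(0,1)$; $\operatorname{supp}(z_1)$ is an interval of the form $[a,1]$ with $a\in(0,1)$; and $z_k=z_0^{-1}z_1z_0$ (this is the relation $y_1y_0=y_0y_k$). From the last two facts, $\operatorname{supp}(z_k)=z_0(\operatorname{supp}(z_1))=[z_0(a),1]$, and since $a<z_0(a)<1$ the interior of $\operatorname{supp}(z_k)$ is contained in the interior of $\operatorname{supp}(z_1)$.

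Finally, I would argue by contradiction: suppose $x_0\in(0,1)$ is fixed by every element of $\alpha_k(H_k)$. Since $\gamma_1=z_1$ fixes $x_0$, the point $x_0$ does not lie in the interior of $\operatorname{supp}(z_1)$. Since $\gamma_0=z_0z_{k-1}z_k^{-1}$ fixes $x_0$, we have $z_{k-1}(z_0(x_0))=z_k(x_0)$; as $z_0(x_0)>x_0$ and $z_{k-1}(t)\ge t$ for all $t$, this forces $z_k(x_0)\ge z_0(x_0)>x_0$, so $z_k$ does not fix $x_0$, i.e. $x_0$ lies in the interior of $\operatorname{supp}(z_k)$, and hence, by the nesting above, in the interior of $\operatorname{supp}(z_1)$ --- contradicting the previous sentence. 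Thus $\alpha_k(H_k)$ stabilises no point of $(0,1)$. The only genuine computation here is the identity $y_0y_{k-1}=w_0w_{k-1}w_k^{-1}$, which is routine; the one step that needs a little care is the inclusion $\operatorname{supp}(z_k)\subseteq\operatorname{supp}(z_1)$, which however is immediate once one observes that $z_k$ is the $z_0$-conjugate of $z_1$ and that $z_0$ pushes every point of $(0,1)$ strictly to the right while fixing the endpoints. The argument is uniform in $k\ge 2$; for $k=2$ it recovers the situation of \cite{GS2}, where $H_2=\vec F$.
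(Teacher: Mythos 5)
Your proof is correct, and its core coincides with the paper's: you establish the identity $w_0w_{k-1}w_k^{-1}=y_0y_{k-1}$ (which indeed follows from $y_ky_{k-1}=y_{k-1}y_{2k-1}$ and $y_{2k-1}y_k=y_ky_{3k-2}$), so that $z_0z_{k-1}z_k^{-1}=\alpha_k(y_0y_{k-1})\in\alpha_k(H_k)$ --- exactly the element the paper produces. Where you diverge is the concluding dynamical step: the paper asserts that this single element has no fixed point in $(0,1)$ and stops there, whereas you do not verify that claim and instead adjoin the second element $z_1=\alpha_k(y_1y_k)=\alpha_k(w_1)\in\alpha_k(H_k)$ and show the pair has no common fixed point, using the nesting ${\rm supp}(z_k)=z_0({\rm supp}(z_1))\subsetneq {\rm supp}(z_1)$; this is correct under the composition convention $f\cdot g(x)=g(f(x))$ adopted in the paper, with which both your fixed-point equation $z_{k-1}(z_0(x_0))=z_k(x_0)$ and your conjugation formula are consistent. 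Your variant buys a fully detailed, computation-free argument (no piecewise analysis of the homeomorphism $z_0z_{k-1}z_k^{-1}$ is needed), at the modest cost of invoking one more generator of $H_k$; the paper's route is shorter but leaves the fixed-point-freeness of $z_0z_{k-1}z_k^{-1}$ as an unproved ``easy to see'' check. One minor notational point: the relation you quote as $y_1y_0=y_0y_k$ should be read in the target copy as $z_1z_0=z_0z_k$; this is harmless since both copies of $F_k$ satisfy the same defining relations.
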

\begin{proof}
As in Proposition \ref{propGi}, set $w_i:=y_iy_k$ for $i=0, \ldots , k-1$ and $w_{n}:=w_0^{-1}w_{n-k+1}w_0$ for all $n\geq k$. Since
$$
w_0w_{k-1}w_k^{-1}=y_0y_{k-1}\in H_k
$$
we have that $\alpha_k(y_0y_{k-1})=\alpha_k(w_0w_{k-1}w_k^{-1})=z_0z_{k-1}z_k^{-1}$.
Now it is easy to see that this element does not have fixed points in $(0,1)$ and we are done.
\end{proof}

The following natural questions about the subgroups $H_k$ remain open for now.
%Some natural questions spring to mind about the subgroups $H_k$.
\begin{problem}
Is the index of $H_k$ in $G_k$ infinite?
Is $H_{k}$ a maximal subgroup of $G_{k}$?
\end{problem}
\begin{remark}
In \cite{GS2}, implicit examples of infinite index maximal subgroups of $F$ were exhibited
using the theory of diagram groups. 
Since all Brown-Thompson groups are diagram groups,
it is possible that   similar methods 
could be applied to them as well. We haven't explored this direction.
\end{remark}
For $F$ there is another possible source of maximal infinite index subgroups of $F$ coming from a natural generalisation of the map $\varphi_2$ defined above. 
Let $T$ be a rooted planar binary tree with $k$ leaves.
As usual we put its leaves on the $x$-axis, precisely on the non-negative integers $0, 1, 2, \ldots, k-1$.
We denote by $\ell_T(0)$ the number of left edges in the path from the left-most leaf to the root and by $\ell_T(k-1)$ the number of right edges in the path from the right-most leaf to the root.
Now we define an injective map $\alpha_T: F_k \to F$:
given a tree diagram $(T_+,T_-)\in F_k$, replace any vertex of degree $k+1$ with the tree $T$, \cite{Ren}.  

We recall that   the projection $\pi: F\to F/[F,F]=\mathbb{Z}\oplus \mathbb{Z}$ can be described as $\pi(f)=(\log_2 f'(0),\log_2 f'(1))$, see \cite{CFP}. 
If $f$ is represented by a pair of trees $(T_+,T_-)$, then $\log_2 f'(0)$ is equal to the number of left edges in the path from the left-most leaf to the root of $T_+$ minus the same number for $T_-$.
Similarly, $\log_2 f'(1)$ is equal to the number of right edges in the path from the right-most leaf to the root of $T_+$ minus the same number for $T_-$.

Recall from \cite{BW} that, for any $a$, $b\in \mathbb{N}$, one can define the rectangular subgroups of $F$ as
$$
K_{(a,b)}:=\{f\in F\; | \;  \log_2 f'(0)\in a \mathbb{Z},  \log_2 f'(1)\in b \mathbb{Z} \}
$$
All these subgroups have finite index and are isomorphic with $F$.   
\begin{proposition}
The subgroup $\alpha_T(F_k)$ sits inside of $K_{(\ell_T(0),\ell_T(k-1))}$.
\end{proposition}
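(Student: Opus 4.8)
The plan is to compute the abelianization map $\pi\colon F\to\mathbb{Z}\oplus\mathbb{Z}$, $\pi(f)=(\log_2 f'(0),\log_2 f'(1))$, on the image of $\alpha_T$ and to check that the two coordinates of $\pi(\alpha_T(g))$ lie in $\ell_T(0)\mathbb{Z}$ and in $\ell_T(k-1)\mathbb{Z}$ respectively; this is exactly the condition defining the rectangular subgroup $K_{(\ell_T(0),\ell_T(k-1))}$. By the description of $\pi$ recalled just before the statement, if $g=(T_+,T_-)\in F_k$ then $\alpha_T(g)$ is represented by the pair of binary trees obtained by replacing every $(k+1)$-valent vertex of $T_+$, resp. of $T_-$, by a copy of $T$; denote these binary trees by $\alpha_T(T_+)$ and $\alpha_T(T_-)$. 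Then $\log_2\alpha_T(g)'(0)$ is the number of left edges on the path from the left-most leaf to the root of $\alpha_T(T_+)$ minus the same count for $\alpha_T(T_-)$, and symmetrically $\log_2\alpha_T(g)'(1)$ is the corresponding difference of numbers of right edges on the right-most-leaf paths.

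I would first record the elementary remark that in any rooted planar binary tree every edge on the path from the root to the left-most leaf is a left edge, so the number of such edges coincides with the depth of the left-most leaf; dually, the root-to-right-most-leaf path consists of right edges only, with length the depth of the right-most leaf. Thus, writing $d_\pm$ and $e_\pm$ for the depths of the left-most and of the right-most leaf of the $k$-ary trees $T_\pm$, it suffices to prove the key claim: for every $k$-ary tree $S$, the depth of the left-most leaf of $\alpha_T(S)$ equals $\ell_T(0)$ times the depth of the left-most leaf of $S$, and the depth of the right-most leaf of $\alpha_T(S)$ equals $\ell_T(k-1)$ times the depth of the right-most leaf of $S$. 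Granting the claim, $\log_2\alpha_T(g)'(0)=\ell_T(0)(d_+-d_-)\in\ell_T(0)\mathbb{Z}$ and $\log_2\alpha_T(g)'(1)=\ell_T(k-1)(e_+-e_-)\in\ell_T(k-1)\mathbb{Z}$, so $\alpha_T(g)\in K_{(\ell_T(0),\ell_T(k-1))}$, and since $g\in F_k$ was arbitrary we conclude $\alpha_T(F_k)\subseteq K_{(\ell_T(0),\ell_T(k-1))}$.

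The main (and essentially the only) obstacle is the key claim, which I would prove by tracing the left-most branch of $\alpha_T(S)$. Let $d_S$ be the depth of the left-most leaf of $S$ and let $v_0,\dots,v_{d_S-1}$ be the internal vertices of $S$ on this branch, with $v_0$ the root. Descending from the root of $\alpha_T(S)$ and always taking the left edge, one runs through the copies of $T$ that replace $v_0,\dots,v_{d_S-1}$ in order, passing in each of them from its root to its left-most leaf; this left-most leaf is the attachment point of the following copy (or the left-most leaf of $S$ itself for the last copy), so these $d_S$ root-to-left-most-leaf paths of $T$, each of length $\ell_T(0)$, concatenate to the root-to-left-most-leaf path of $\alpha_T(S)$, which therefore has length $\ell_T(0)\,d_S$. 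A clean write-up is a short induction on the number of internal vertices of $S$. The right-most-leaf statement is the mirror image, with $\ell_T(k-1)$ in place of $\ell_T(0)$; everything else is a direct substitution into the formula for $\pi$, and I do not foresee any further difficulty.
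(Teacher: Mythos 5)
Your proof is correct and follows essentially the same route as the paper: both arguments reduce the statement to the formula $\pi(f)=(\log_2 f'(0),\log_2 f'(1))$, computed from a tree diagram by counting left (resp.\ right) edges on the extreme branches, and to the fact that substituting $T$ at each vertex scales these counts by $\ell_T(0)$ and $\ell_T(k-1)$. The only difference is organizational: the paper just inspects the generators $y_0,\dots,y_{k-1}$, whereas you prove the depth-scaling claim for an arbitrary $k$-ary tree and so verify the defining condition of $K_{(\ell_T(0),\ell_T(k-1))}$ on every element of $\alpha_T(F_k)$ directly, making explicit the structural fact the paper's generator computation implicitly uses.
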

\begin{proof}
By looking at the tree diagrams of the generators $y_0$, \ldots , $y_{k-1}$ of $F_k$ (see %e.g. \cite[Figure 5]{Wladis}
Figure \ref{genFk}), one can see that 
\begin{align*}
\pi(\alpha_T(y_0)'(0),\alpha_T(y_0)'(1)) & =(2\ell_T(0)-\ell_T(0),\ell_T(k-1)-2\ell_T(k-1))\\
& =(\ell_T(0),\ell_T(k-1))\\
\pi(\alpha_T(y_i)'(0),\alpha_T(y_i)'(1)) &=(\ell_T(0)-\ell_T(0),\ell_T(k-1)-2\ell_T(k-1))\\
& =(0,\ell_T(n-1))\qquad i=1, \ldots , n-2\\
\pi(\alpha_T(y_{k-1})'(0),\alpha_T(y_{k-1})'(1))& =(\ell_T(0)-\ell_T(0),2\ell_T(k-1)-3\ell_T(k-1))\\
&=(0,-\ell_T(k-1))\; .
\end{align*} 
%In particular, we have that $\alpha_T(F_k)$ sits inside of $K_{(\ell_T(0),\ell_T(k-1))}$.
\end{proof}
Note that for $a=1$, $b=2$, we get $K_{(1,2)}=G_2$, also $\varphi_2$ is $\alpha_T$ with $T$ the binary tree with three leaves where the caret is to the right from the root. 
\begin{problem}
What is the index of $\alpha_T(F_k)$ in $K_{(\ell_T(0),\ell_T(k-1))}$?
Is $\alpha_T(F_k)$ maximal in $K_{(\ell_T(0),\ell_T(k-1))}$?
\end{problem}

\section*{Acknowledgements} 
 T.N. acknowledges support of  Swiss NSF grants 200020-178828 and 200020-200400.

\end{document}